\newcommand\Ethirtyseven{\cite{KnLoc}*{$\mathrm{(5.2)}$}}
\newcommand\Parity{Corollary~2.4}
\newcommand\typeAcolorInequal{Proposition~6.5}
\newcommand\NumberColors{Proposition~2.1}
\def\@settitle{%
  \baselineskip14\p@\relax
    {\Large\bfseries
  \@title}}
\def\@setauthors{%
  \begingroup
  \def\thanks{\protect\thanks@warning}%
  \trivlist
  \footnotesize \@topsep45\p@\relax
  \advance\@topsep by -\baselineskip
  \item\relax
  \author@andify\authors
  \def\\{\protect\linebreak}%
  {\sc\fontsize{12}{10}\selectfont\authors}%
  \ifx\@empty\contribs
  \else
    ,\penalty-3 \space \@setcontribs
    \@closetoccontribs
  \fi
  \endtrivlist
  \endgroup
}
\def\@secnumfont{\bfseries}%
\def\section{\@startsection{section}{1}%
  \z@{.7\linespacing\@plus\linespacing}{.5\linespacing}%
  {\normalfont\bf}}
\renewcommand{\BibLabel}{%
    \Hy@raisedlink{\hyper@anchorstart{cite.\CurrentBib}\hyper@anchorend}%
    [\thebib]\hfill%
}
\newcommand{\arxiv}[1]{\href{http://arxiv.org/abs/#1}{{\tt arxiv:\hspace{0pt}#1}}}
\numberwithin{equation}{section}
\setlist[enumerate,1]{label=\textit{\alph*)},ref=\textit{\alph*})}
\setlist[enumerate,2]{label=\textit{\roman*)},ref=\textit{\roman*})}
\theoremstyle{plain}
\newtheorem{theorem}{Theorem}[section]
\newaliascnt{lemma}{theorem}  
\newtheorem{lemma}[lemma]{Lemma}  
\newaliascnt{corollary}{theorem}  
\newtheorem{corollary}[corollary]{Corollary}  
\newtheorem{proposition}[theorem]{Proposition}
\theoremstyle{definition}
\newtheorem{definition}[theorem]{Definition}
\newtheorem*{example}{Example}
\newtheorem*{remark}{Remark}
\newtheorem*{remarks}{Remarks}
\newcommand\cC{{\mathcal C}}
\newcommand\cV{{\mathcal V}}
\def\|#1|{\operatorname{#1}}
\newcommand\QQ{{\mathbb Q}}
\newcommand\ZZ{{\mathbb Z}}
\renewcommand\a{\alpha}
\renewcommand\b{\beta}
\newcommand\e{\varepsilon}
\newcommand\<{\langle}
\renewcommand\>{\rangle}
\newcommand\auf{\twoheadrightarrow}
\newcommand\into{\hookrightarrow}
\newcommand\Cq{{\overline C}}
\newcommand\Gq{{\overline G}}
\newcommand\Xq{{\overline X}}
\newcommand\Yq{{\overline Y}}
\newcommand\HS{{\widetilde H}}
\newcommand\pfeil{\rightarrow}
\newcommand\Gm{{\mathbf G_m}}
\newcommand\A{{\mathsf A}}
\newcommand\B{{\mathsf B}}
\newcommand\C{{\mathsf C}}
\newcommand\D{{\mathsf D}}
\newcommand\F{{\mathsf F}}
\newcommand\G{{\mathsf G}}
\renewcommand\P{{\mathbf P}}
\renewcommand\phi{\varphi}
\newcommand\GL{{\operatorname{GL}}}
\newcommand\PGL{{\operatorname{PGL}}}
\newcommand\SL{{\operatorname{SL}}}
\renewcommand\O{{\operatorname{O}}}
\newcommand\SO{{\operatorname{SO}}}
\newcommand\PSO{{\operatorname{PSO}}}
\newcommand\Spin{{\operatorname{Spin}}}
\newcommand\Sp{{\operatorname{Sp}}}
\newcommand\PSp{{\operatorname{PSp}}}
\newcommand\semidir{\ltimes}
\newcommand\leer{\varnothing}
\def\tP/{{\ifmmode (p)\else$(p)$\fi}}
\def\tB/{{\ifmmode (b)\else$(b)$\fi}}
\def\tA/{{\ifmmode (a)\else$(a)$\fi}}
\def\tAA/{{\ifmmode (2a)\else$(2a)$\fi}}
\begin{document}

\title{Spherical roots of spherical varieties}

\author{Friedrich Knop\newline FAU Erlangen-Nürnberg} \address
{Department Mathematik, Emmy-Noether-Zentrum, FAU Erlangen-Nürnberg,
  Cauerstraße 11, 91058 Erlangen, Germany}
\email{friedrich.knop@fau.de}

\begin{abstract}

  Brion proved that the valuation cone of a complex spherical variety
  is a fundamental domain for a finite reflection group, called the
  little Weyl group.  The principal goal of this paper is to
  generalize this theorem to fields of characteristic unequal to
  $2$. We also prove a weaker version which holds in characteristic 2,
  as well. Our main tool is a generalization of Akhiezer's
  classification of spherical varieties of rank $1$.

\end{abstract}

\subjclass[2010]{14M27,14L30,14G17}

\keywords{Spherical varieties, spherical roots, homogeneous
varieties, fields of positive characteristic}

\maketitle

\section{Introduction}
\label{sec:Introduction}

Let $G$ be a connected reductive group defined over an algebraically
closed field $k$ of arbitrary characteristic $p$. A $G$-variety $X$ is
\emph{spherical} if the Borel subgroup $B$ of $G$ has an open orbit in
$X$. For $p=0$ there exists a well-developed structure theory for
spherical varieties. The present paper is part of a program to
generalize this structure theory to arbitrary characteristic.

A crucial part of characteristic zero theory depends on Akhiezer's
list, \cite{Akh}, of spherical varieties of rank 1. In the companion
paper \cite{KnLoc} we compiled results which can be proved without
such a list. In the present paper we use Akhiezer's list, after
generalizing it to arbitrary characteristic, to prove Brion's theorem
on the structure of the valuation cone.

More precisely, for $p=0$, Brion proved in \cite{BriES} that the
valuation cone of a spherical variety is the fundamental domain of a
finite reflection group, the little Weyl group $W_X$ of $X$. Following
Brion, we define $W_X$ to be the group generated by the reflections
about the codimension-1-faces of the valuation cone $\cV(X)$. Our
first important result, \cref{thm:WXfinite}, states that $W_X$ is
always finite. This entails immediately that $\cV(X)$ is always a
union of Weyl chambers of $W_X$.

Next, we investigate when $\cV(X)$ consists of just one chamber. Since
counterexamples were known by Schalke \cite{Schalke} for $p=2$ it came
a bit as a surprise that $\cV(X)$ is indeed a single Weyl chamber
whenever $p\ne2$ (\cref{cor:FundDomain}).

There is even a version of Brion's theorem which is valid for
arbitrary, possibly non-spherical, $G$-varieties. In that case, one
considers the set $\cV_0(X)$ of $G$-invariant valuations of $k(X)$
which are trivial on the subfield $k(X)^B$. Then again, $\cV_0(X)$ is
the fundamental domain for a unique reflection group $W_X$, provided
that $p\ne2$. As a matter of fact, this statement is a formal
consequence of the spherical case (this was already observed in
\cite{InvBew}).

Back to the spherical case: in characteristic zero there are, besides
Brion's, several approaches to the little Weyl group. Unfortunately,
they all use Lie algebra techniques which do not carry over to
positive characteristic. Also, it seems to be hard to make Brion's
original proof work for $p\ne0$. But still, we follow his proof in
``spirit'' in that we carefully investigate the dihedral angles of
$\cV(X)$ ant that we use case-by-case arguments.

More precisely, we study the normal vectors to the codimension-1-faces
of $\cV(X)$. Properly normalized they are called the \emph{spherical
  roots} of $X$. This is now where the rank-1-varieties come in: they
provide us with all possible spherical roots. For $p=0$ their
classification was achieved by Akhiezer, \cite{Akh}. For arbitrary $p$
we follow mostly a simplification due to Brion, \cite{BriR1}. It turns
out (see the table~\S\ref{sec:TABLE}) that there are no surprises: all
cases are known from characteristic zero or can be reduced to a known
one using an inseparable isogeny.

Using this table it is not difficult to see that the angle of any two
spherical roots of $X$ is almost always obtuse. Then, for ruling out
the few exceptions, we use the structure theory developed in
\cite{KnLoc}. For $p=2$, some cases remain which are all listed in
\cref{thm:obtuse}.

\medskip

\noindent{\bfseries Acknowledgment:} I would like to thank Guido
Pezzini for many discussions on the matter of this paper.

\medskip

\noindent{\bfseries Notation:} In the entire paper, the ground field
$k$ is algebraically closed. Its characteristic exponent is denoted by
$p$, i.e., $p=1$ if $\|char|k=0$ and $p=\|char|k$, otherwise. The
group $G$ is connected reductive, $B\subseteq G$ is a Borel subgroup,
and $T\subseteq B$ is a maximal torus. Let $\Xi(T)=\Xi(B)$ be its
character group. The set of simple roots with respect to $B$ is
denoted by $S\subset\Xi(T)$. For $\a\in S$ let $P_\a\subseteq G$ be
the corresponding minimal parabolic subgroup. The Weyl group of $G$
with respect to $T$ is $W$.

For a spherical variety $X$ let $\Xi(X)\subseteq \Xi(T)$ be the group
of weights of $B$-semiinvariant rational functions on $X$. By
definition, the rank of $X$ is $\|rk|X:=\|rk|\Xi(X)$. We also use the
variants $\Xi_\QQ(X)=\Xi(X)\otimes\QQ$ and
$\Xi_p(X)=\Xi(X)\otimes\ZZ_p$ with $\ZZ_p:=\ZZ[\frac1p]$. For any root
$\a$ of $G$ let $\a^r$ be the linear function
$\chi\mapsto\<\chi,\a^\vee\>$ restricted to $\Xi_\QQ(X)$.

A subgroup $H\subseteq G$ is called spherical if $X=G/H$ is
spherical. In that case, we call $\|rk|X$ the \emph{corank} of $H$

The set $\cV(X)$ of $G$-invariant valuations of $X$ can be considered
as a subset of the dual space $N_\QQ(X)=\|Hom|(\Xi_\QQ(X),\QQ)$ (see
\cite{Hyd}*{Cor.~1.8}). It is known to be a finitely generated convex
cone (\cite{Hyd}*{Cor.~5.3}). A \emph{spherical root of $X$} is a
primitive element $\sigma\in\Xi_p(X)\cap\ZZ S$ such that $\sigma$ is
non-positive on $\cV(X)$ and $\cV(X)\cap\{\sigma=0\}$ is one of its
codimension-1-faces. The set of spherical roots is denoted by
$\Sigma(X)$.

Let $Bx_0\subseteq X$ be the open $B$-orbit. The irreducible
components of $Gx_0\setminus Bx_0$ are called the \emph{colors of
  $X$}. We say that $\a\in S$ is of type \tP/ if $P_\a x_0=Bx_0$. The
set of simple roots $\a$ which are of type \tP/ is denoted by
$S^\tP/(X)$ or simply $S^\tP/$.

\section{Classification of spherical varieties of rank one}
\label{sec:Rk1Classification}

The aim of this section is to state the classification of (reduced)
spherical subgroups $H\subseteq G$ with $\|rk|G/H=1$. Thereby, we get
also a list of all possible spherical roots. In characteristic zero,
this has been first achieved by Akhiezer \cite{Akh}. Here, we follow
closely a simplification due to Brion~\cite{BriR1}.

We start by describing the process of (parabolic) induction. For that
let $P\subseteq G$ be a parabolic subgroup, $G_0$ a connected
reductive group and $\pi:P\auf G_0$ a surjective homomorphism. Let
$X_0$ be a $G_0$-variety. Via $\pi$, one may consider $X_0$ as a
$P$-variety. Then $X=G\times^PX_0$ is called the \emph{$G$-variety
  induced from the $G_0$-variety $X_0$ (via $\pi$)}. In practice, it
is convenient to induce from a parabolic subgroup ${}^-\!P$ which is
opposite to the chosen Borel subgroup $B$. The homomorphism from
${}^-\!P$ to $G_0$ is still denoted by $\pi$.

The homomorphism $\pi$ factors through the Levi subgroup
$L={}^-\!P/{}^-\!P_u$. Put $T_0=\pi(T)$. Then $\pi$ induces an
inclusion $\pi^*:\Xi(T_0)\into\Xi(T)$. We call $\pi$ \emph{central} if
$\pi^*(S(G_0))\subseteq S(L)$. If $\pi$ is smooth then it is
central. Thus, in characteristic zero, $\pi$ is always central.  We
may always choose $\pi$ to be central, if we so wish: just replace
$G_0$ by $\Gq_0:={}^-\!P/(\|ker|\pi)^{\|red|}$.

The following is well-known in characteristic zero:

\begin{lemma}
  \label{lem:Induction}

  Let $X$ be the $G$-variety induced from the $G_0$-variety $X_0$.

  \begin{enumerate}

  \item\label{it:Induction.a} $X$ is spherical if and only if $X_0$ is
    spherical.

  \end{enumerate}

  Assume this. Then:

  \begin{enumerate}[resume]

  \item\label{it:Induction.b} $\Xi(X)=\Xi(X_0)$. In particular,
    $\|rk|X=\|rk|X_0$.

  \item\label{it:Induction.c} $\cV(X)=\cV(X_0)$.

  \end{enumerate}

  Assume, additionally, that $\pi$ is central. Then

  \begin{enumerate}[resume]

  \item\label{it:Induction.d} $\Sigma(X)=\Sigma(X_0)$.

  \item\label{it:Induction.e} $S^\tP/(X)=S^\tP/(X_0)\cup
    S(\|ker|\pi)$.

  \end{enumerate}

\end{lemma}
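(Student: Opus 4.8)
The plan is to reduce every assertion to the \emph{open cell} of the induced variety and then read the statements off from the resulting product description. Let $P\supseteq B$ be the standard parabolic, $L$ its Levi, $N:=R_u(P)\subseteq R_u(B)\subseteq B$, and induce from the opposite parabolic ${}^-\!P$, with $N^-:=R_u({}^-\!P)$, so that $N\cdot{}^-\!P$ is open dense in $G$. The image $Y:=N\times X_0$ in $X=G\times^{{}^-\!P}X_0$ is then a $B$-stable open dense subset, and since $B\cap{}^-\!P=B\cap L=:B_L$ with $B=N\semidir B_L$, the $B$-action on $Y$ is given by $b\cdot[v,z]=[u\,({}^{l}v),\pi(l)z]$ for $b=ul$ with $u\in N$, $l\in B_L$. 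This single formula is the workhorse. Writing $B_0:=\pi(B_L)$, a Borel of $G_0$, it shows that the $B$-orbit of $[v,z]$ is $N\times B_0z$; hence $B$ has a dense orbit on $Y$, equivalently on $X$, if and only if $B_0$ has a dense orbit on $X_0$, which is \ref{it:Induction.a}. Assuming this, fix $z_0$ with $B_0z_0$ open; the open $B$-orbit is $O=N\times B_0z_0$.

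For \ref{it:Induction.b} I would use that $N$ acts by translation simply transitively on the first factor, so $k(Y)^{N}=k(X_0)$; as $N$ is unipotent, every $B$-eigenfunction on $X$ is $N$-invariant and thus restricts to a $B_L$-eigenfunction on $X_0$, i.e. to a $B_0$-eigenfunction, with weights related by $\pi^*$. This gives $\Xi(X)=\pi^*\Xi(X_0)$ and $\|rk|X=\|rk|X_0$. For \ref{it:Induction.c}, transport $N_\QQ(X)$ to $N_\QQ(X_0)$ along \ref{it:Induction.b}. A $G$-invariant valuation $v$ of $k(X)$ is in particular $L$-invariant, so its restriction to the $L$-stable subfield $k(X_0)$ is $L$-invariant, hence $G_0=\pi(L)$-invariant, and induces the same functional on weights; this gives the inclusion $\cV(X)\subseteq\cV(X_0)$. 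Conversely a $G_0$-invariant $v_0$ extends to the Gauss valuation on $k(Y)=k(X_0)(N)$ that is trivial on a coordinate system of $N$; the only point requiring the valuation theory of \cite{Hyd} is that this manifestly $B$-invariant extension is in fact $G$-invariant, after which the two cones are identified.

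Assume now that $\pi$ is central. For \ref{it:Induction.d}, the defining conditions of a spherical root—primitivity in $\Xi_p$, non-positivity on the valuation cone, and that its vanishing locus be a codimension-$1$ face—involve only $\Xi_p$ and $\cV$, which agree by \ref{it:Induction.b} and \ref{it:Induction.c}. The sole remaining condition is membership in the root lattice, $\sigma\in\ZZ S$ against $\sigma_0\in\ZZ S(G_0)$; centrality, namely $\pi^*(S(G_0))\subseteq S(L)\subseteq S$, is exactly what makes $\pi^*$ match these two lattices inside $\Xi_\QQ(X)$, whence $\Sigma(X)=\Sigma(X_0)$.

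The real work is \ref{it:Induction.e}, for which I would test $P_\a$-stability of $O$ one simple root at a time. For $\a\in S(L)$ one has $P_\a=N\semidir P_{L,\a}$ with $P_{L,\a}\subseteq L$ the corresponding minimal parabolic, so $P_\a O=N\times\pi(P_{L,\a})z_0$; thus $\a$ is of type \tP/ precisely when $\pi(P_{L,\a})z_0=B_0z_0$. This is automatic when $U_{\pm\a}\subseteq\|ker|\pi$, since then $\pi(P_{L,\a})=B_0$, and otherwise reduces, via centrality, to $\pi(\a)$ being of type \tP/ for $X_0$. For $\a\in S\setminus S(L)$ the root group $U_{-\a}$ lies in $N^-\subseteq\|ker|\pi$; writing $u_{-\a}v=v'q'$ in the big cell with $v'\in N$, $q'\in{}^-\!P$, one gets $u_{-\a}[v,z]=[v',\pi(q')z]$, and a rank-one computation in the $\SL_2$ generated by $U_{\pm\a}$ shows that the Levi part of $q'$ is $\a^\vee(\cdot)$, so $\pi(q')\in T_0\subseteq B_0$ preserves $B_0z_0$; hence $U_{-\a}O\subseteq O$ and every such $\a$ is again of type \tP/. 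Collecting the three cases and comparing with the definition of $S(\|ker|\pi)$ yields $S^\tP/(X)=S^\tP/(X_0)\cup S(\|ker|\pi)$. I expect this case analysis—and in particular the verification that in the induction directions the ${}^-\!P$-part of the big-cell decomposition acts through $T_0\subseteq B_0$—to be the main obstacle; the earlier parts are essentially formal once the open cell is in hand.
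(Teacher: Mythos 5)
Your parts \textit{a)}--\textit{d)} are sound and essentially follow the paper's own route: everything is read off the open cell $P_u\times X_0$ (your $N\times X_0$), and your discussion of \textit{d)} is actually more explicit than the paper's one-line deduction. For \textit{c)} you deviate slightly: the paper argues through the correspondence between elements of $\cV(X)$ and smooth embeddings $X\into\Xq$ with homogeneous boundary divisor, extending $X\pfeil G/{}^-\!P$ to $\Xq$ to conclude $\Xq=G\times^{{}^-\!P}\Xq_0$; you instead restrict invariant valuations (fine) and extend by a Gauss valuation. Be careful with your last step there: the Gauss extension is not ``in fact $G$-invariant'' by itself. What \cite{Hyd} supplies is either a $G$-invariant valuation with the same restriction to $k(X)^{(B)}$ (generic translates), or --- closest to your construction --- the observation that if $v_0$ is the valuation of a $G_0$-stable divisor $D_0$ in an embedding $\Xq_0$, then your Gauss valuation is the valuation of the $G$-stable divisor $G\times^{{}^-\!P}D_0$, hence $G$-invariant. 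Either patch is routine.

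The genuine error is in \textit{e)}, in the case $\a\in S\setminus S(L)$. You claim $U_{-\a}O\subseteq O$, so that every such $\a$ is of type \tP/. The truth is the exact opposite, and it is the key step of the paper's proof: if $P_\a$ stabilizes the open $B$-orbit $X_1$, then it stabilizes the image of $X_1$ in $G/{}^-\!P$, which is the big cell, and this forces $\a\in S(L)$; so no simple root outside $S(L)$ is ever of type \tP/. Your computation fails because the decomposition $u_{-\a}v=v'q'$ with $v'\in N$, $q'\in{}^-\!P$ exists only for $(u_{-\a},v)$ in a dense open subset, and precisely where it breaks down, $U_{-\a}$ pushes points of $O$ out of $Y$, over the smaller Bruhat cells of $G/{}^-\!P$ (in your rank-one formula this is the locus $1+st=0$). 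Test case: $G=\SL(2)$, ${}^-\!P={}^-\!B$, $X_0$ a point, so $X=\P^1$ and $O=Y$ is the big cell; then $U_{-\a}O=\P^1\not\subseteq O$, $P_\a O=\P^1$, and $\a$ is not of type \tP/, whereas your case \textit{iii)} declares it to be. Consequently your three cases actually sum to $S^\tP/(X)=S^\tP/(X_0)\cup S(\|ker|\pi)\cup(S\setminus S(L))$, which is not the assertion of the lemma: $S(\|ker|\pi)$ consists of the simple roots of $L$ killed by $\pi$, so it is a subset of $S(L)$. The statement you need, and which your argument inverts, is the inclusion $S^\tP/(X)\subseteq S(L)$.
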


\begin{proof}

  Assertions \ref{it:Induction.a} and \ref{it:Induction.b} follow from
  the fact that $X$ contains $P\times^LX_0=P_u\times X_0$ as a
  $B$-invariant open subset.

  For the rest of the assertions we may replace $X_0$ by its open
  $G_0$-orbit and therefore assume that $X_0$ and $X$ are
  homogeous. Then, up to a positive scalar, the elements of $\cV(X)$
  correspond to smooth embeddings $X\into\Xq$ such that
  $D=\Xq\setminus X$ is a homogeneous divisor. The canonical morphism
  $X\pfeil G/{}^-\!P$ extends to $\Xq$. Thus, $\Xq$ is of the form
  $G\times^{{}^-\!P}\Xq_0$ where $\Xq_0\setminus X_0$ is a homogeneous
  divisor, as well. Hence it corresponds to an element of
  $\cV(X_0)$. This easily implies \ref{it:Induction.c}.  Assertion
  \ref{it:Induction.d} is now a direct consequence of
  \ref{it:Induction.c} and the definition of spherical roots.

  Finally, assume $P_\a$, $\a\in S(G)$ stabilizes the open $B$-orbit
  $X_1$ in $X$. Then its image $P_u$ in $G/{}^-\!P$ is stabilized, as
  well. This shows $\a\in S(L)$. In that case, $P_\a$ stabilizes $X_1$
  if and only if $\pi(P_\a)\subseteq G_0$ stabilizes the open
  $B_0$-orbit in $X_0$. Assertion \ref{it:Induction.e} follows.
\end{proof}

Of particular importance is the case when $X_0=G_0/H_0$ is
homogeneous. Then $X=G/H$ is homogeneous, as well, with
$H=\pi^{-1}(H_0)$. If $H$ cannot be obtained by induction in a
non-trivial way, i.e., with $\|dim|G_0<\|dim|G$, then it is called
\emph{cuspidal}. Thus, this means two things:

\begin{enumerate}

\item The only parabolic $P\subseteq G$ with $P_u\subseteq H\subseteq
  P$ is $P=G$.

\item The only connected normal subgroup $K$ of $G$ with $K\subseteq
  H$ is $K=1$.

\end{enumerate}

\noindent Observe that cuspidality is preserved under isogenies. More
precisely, image and preimage of a cuspidal subgroup under an isogeny
are cuspidal.

Any finite subgroup $H$ of the 1-dimensional torus $G=\Gm$ is
certainly cuspidal. Spherical varieties of rank one which are
parabolically induced from a torus are called \emph{horospherical}. In
other words, a spherical subgroup $H\subset G$ of corank $1$ is
horospherical if it is of the form $\|ker|\chi$ where $\chi$ is a
non-trivial character of a parabolic subgroup of $G$. We are going to
prove later:

\begin{lemma}\label{lem:Rk1Semisimple}

  Assume that $G$ contains a cuspidal spherical subgroup of corank
  $1$. Then either $G\cong\Gm$ or $G$ is semisimple.

\end{lemma}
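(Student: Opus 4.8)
The plan is to prove the contrapositive via the structure theory of reductive groups, by analyzing how cuspidality interacts with the decomposition of $G$ into a central torus and its semisimple part.
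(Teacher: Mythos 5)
Your proposal is not a proof; it is only a statement of intent, and the one concrete idea it names (decomposing $G$ into its connected center $Z$ and its semisimple part) is not by itself enough. Note that one direction is easy: if $Z\subseteq H$, then $Z$ is a connected normal subgroup of $G$ contained in $H$, so cuspidality forces $Z=1$ and $G$ is semisimple. The real difficulty is the case $Z\not\subseteq H$: there you must show that this forces $G\cong\Gm$, and generic ``structure theory of reductive groups'' gives no handle on how an a priori arbitrary spherical subgroup $H$ sits relative to $Z$.

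The paper resolves this with a geometric input special to rank one: the maximal smooth completion $\Xq$ of $X=G/H$ and the resulting dichotomy (\autoref{lem:intermediate}) that for any intermediate subgroup $H\subseteq K\subseteq G$, either $K/H$ or $G/K$ is complete. This is applied to $K=ZH$, in which $H$ is normal with $K/H=Z/(Z\cap H)$ a torus. If $K/H$ is complete, then, being both affine and complete, it is trivial, so $Z\subseteq H$ and we are in the easy case above. If instead $G/K$ is complete, then $K$ is parabolic, $H$ is parabolically induced from the torus $K/H$, and cuspidality forces $G=K/H\cong\Gm$. Without this dichotomy (or some substitute showing that $ZH$ is parabolic when $Z\not\subseteq H$), your plan cannot be completed; any write-up along the lines you indicate must supply exactly this step.
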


\noindent Hence, it suffices to treat the case that $G$ is
semisimple. In the following it is convenient to assume that $G$ is
even of adjoint type which is obviously not a big loss of
generality. The main classification theorem is now:

\begin{theorem}\label{thm:MainClassifThm}

  Let $G$ be a semisimple group of adjoint type and let $H\subset G$
  be a cuspidal spherical subgroup of corank $1$. Then:

  \begin{enumerate}

  \item\label{it:MainClassifThm.a}

    The pair $(G,H)$ appears in the table~\S\ref{sec:TABLE}.

  \item\label{it:MainClassifThm.b}

    The coefficients of the spherical root $\sigma\in\Sigma(G/H)$ are
    indicated in the column with caption ``$\sigma$''. The set
    $S^\tP/(G/H)$ is denoted in the form of black dots.

  \end{enumerate}

\end{theorem}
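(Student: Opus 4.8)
The plan is to reduce everything to a single simple group equipped with a one-dimensional slice, and then to run a finite, characteristic-sensitive case analysis driven by the numerology of colors. First I would show that $G$ is simple. Writing the unique spherical root as $\sigma=\sum_{\a\in S}n_\a\a$, its support $\|Supp|\sigma=\{\a\in S\mid n_\a\ne0\}$ is a connected subdiagram by the standard connectedness property of spherical-root supports. A simple root $\a$ lying outside $\|Supp|\sigma$ would then either permit a nontrivial parabolic induction, i.e.\ produce a proper parabolic $P$ with $P_u\subseteq H\subseteq P$, contradicting the first defining condition of cuspidality; or else force a simple factor of $G$ into $H$ as a connected normal subgroup, contradicting the second. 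Hence $\|Supp|\sigma=S$, the Dynkin diagram is connected, and $G$ is simple of adjoint type.

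Next I would realize the combinatorial data geometrically. Since $\|rk|X=1$ and $\cV(X)$ is a half-line, the simple complete embedding $\Xq$ determined by that ray together with all colors has a unique closed orbit $Y$, of codimension $1$; as a $G$-variety, $Y$ is a flag variety $G/Q$ with $Q\supseteq B$. The local structure theorem produces a $Q$-stable affine slice through a point of $Y$ meeting $X$ in the open $B$-orbit, and rank $1$ forces this slice to be one-dimensional, a $T$-stable $\P^1$. Then $\sigma$ is, up to a positive rational multiple, the opposite of the weight by which $T$ acts on the normal direction to $Y$, and $S^\tP/(G/H)$ is read off from which $P_\a$ fix the open $B$-orbit. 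This reduces the determination of the pair $(\sigma,S^\tP/(G/H))$ to the rank-$1$ geometry of this $\P^1$ under the Levi of $Q$.

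With $\|Supp|\sigma=S$ in hand, I would cut the candidates down to a finite list. The admissible coefficient patterns $n_\a$ are severely restricted by the count of colors attached to each simple root and by the parity constraints recorded in \NumberColors, \Parity, and \typeAcolorInequal, which together with the half-line shape of $\cV(X)$ leave only finitely many pairs $(\text{type of }G,\sigma)$ in each rank. Proceeding type by type through $\A_n,\B_n,\C_n,\D_n$ and the exceptional diagrams down to $\G_2$, I would, for each surviving datum, exhibit an explicit cuspidal spherical $H$ realizing it, verify that $H$ is unique up to conjugacy, and record $S^\tP/(G/H)$ as the set of black dots. Matching these against \S\ref{sec:TABLE} establishes \ref{it:MainClassifThm.a} and \ref{it:MainClassifThm.b} simultaneously.

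The main obstacle is the passage to positive characteristic, where two deviations from Akhiezer's list must be controlled: a candidate $\sigma$ that is not realizable in characteristic $0$ might become so, and the slice $\P^1$ may carry an inseparable $\SL_2$- or $\PGL_2$-action rather than a separable one. The latter I would dispose of by exhibiting the offending $(G,H)$ as the image of a known pair under an inseparable isogeny; since cuspidality and the combinatorial invariants are preserved by isogenies, no new rows arise. The genuinely delicate residue is $p=2$, where type \tAA/ and type \tB/ roots interfere and a handful of extra configurations survive; these I would isolate and check by hand to be already present in the table, and it is here that I expect the bulk of the work to lie.
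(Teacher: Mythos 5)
Your opening step contains a genuine error that invalidates the reduction on which everything else rests. You claim that the support of a spherical root ``is a connected subdiagram by the standard connectedness property of spherical-root supports'' and conclude that $G$ must be simple. There is no such property: spherical roots of type $\A_1\times\A_1$ have disconnected support. Already in characteristic $0$, Akhiezer's list contains the cuspidal pair $G=\PGL(2)\times\PGL(2)$, $H=\Delta(\PGL(2))$, with $\sigma=\a_1+\a_2$ and $\a_1\perp\a_2$; in characteristic $p>0$ the paper's table moreover contains the Frobenius-twisted diagonals $H=(\|id|\times F_q)\PGL(2)$ with $\sigma=q\a_1+\a_2$ (this is also why, as the paper's $\A_3$ example shows, a group can have infinitely many spherical roots such as $\a_1+q\a_3$). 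So the conclusion ``$G$ is simple'' is false, and your proposal silently discards an entire family of rows of the table. The paper handles exactly this case with a Dynkin-type lemma on maximal connected reductive subgroups of non-simple semisimple groups, which forces $G\cong\PGL(2)\times\PGL(2)$ and $H$ a (possibly Frobenius-twisted) diagonal.

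Beyond this, the core of the theorem --- for each surviving candidate $\sigma$, exhibiting a cuspidal spherical $H$ and proving its uniqueness up to conjugacy --- is asserted rather than argued: you say the color counts and the parity constraints of \NumberColors, \Parity\ and \typeAcolorInequal\ ``severely restrict'' the possibilities and that you would then ``verify uniqueness,'' but those propositions constrain spherical roots of arbitrary spherical varieties and give no handle on classifying the subgroups themselves. The paper's proof runs through a structural dichotomy instead: by Borel--Tits, a cuspidal $H$ is either reductive or satisfies $P=HP_u$ for a parabolic $P$; in the reductive case one invokes the classification of spherical reductive subgroups of simple groups (Krämer in characteristic $0$, Knop--Röhrle in general, where the only genuinely new pair has rank $3$ and so drops out), and in the non-reductive case one runs Brion's argument, showing the $P$-action on $P/H$ is doubly transitive and citing the classification of such actions. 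Finally, your claim that inseparable isogenies produce ``no new rows'' is also off: since the theorem fixes $G$ of adjoint type, the isogenous images do appear as new rows (e.g.\ $\G_2\subset\PSp(6)$ and $\Sp(8)\subset\F_4$ for $p=2$), which is precisely why the table has separate sections for $p=2$ and $p=3$; and part \ref{it:MainClassifThm.b} for these still needs the lifting-to-characteristic-$0$ argument the paper supplies, which your sketch does not.
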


\begin{remark}

  The table is a bit condensed in the sense that an entry $\<s\>\cdot
  H_0$ denotes the two groups $H=H_0$ and its normalizer
  $H=\<s\>H_0$. In characteristic $p\ne2$, the spherical roots for
  these groups differ by a factor of $2$ which is also indicated.

\end{remark}

Before we proceed to the proof of \cref{thm:MainClassifThm} we
discuss, following Luna, the notion of an ``abstract spherical root''
of $G$:

\begin{definition}

  \begin{enumerate}

  \item A \emph{spherical root of $G$} is an element $\sigma\in\ZZ S$
    such that there is a spherical $G$-variety $X$ of rank $1$ with
    $\Sigma(X)=\{\sigma\}$. The set of spherical roots of $G$ is
    denoted by $\Sigma(G)$.

  \item A spherical root $\sigma$ of $G$ is \emph{compatible to
      $S'\subseteq S$} if there is a spherical $G$-variety $X$ of rank
    $1$ with $\Sigma(X)=\{\sigma\}$ and $S^\tP/(X)=S'$.

  \end{enumerate}

\end{definition}

We introduce the \emph{support} $|\sigma|\subseteq S$ of a spherical
root $\sigma$ as the smallest set of simple roots needed to express
it. In other words,
\begin{equation}
  |\sigma|:=\{\a\in S\mid n_\a\ne0\}
\end{equation}
when
\begin{equation}
  \sigma=\sum_{\a\in S}n_\a \a\quad\hbox{with}\ n_\a=n_\a(\sigma)\in\ZZ_{\ge0}.
\end{equation}
Now, the following recipe on how to determine all spherical roots and
all compatible sets follows directly from \cref{lem:Induction}.

\begin{enumerate}

\item $\sigma\in\Sigma(G)$ if and only if $(|\sigma|,\sigma)$ appears
  in table~\S\ref{sec:TABLE}.

\item $\sigma$ and $S'\subseteq S$ are compatible if and only if all
  $\a\in S'\setminus|\sigma|$ are orthogonal to $\sigma$ and
  $|\sigma|\cap S'$ consists of exactly the ``black vertices'' in the
  diagram of $\sigma$ in table~\S\ref{sec:TABLE}.

\end{enumerate}

\begin{example}

  The spherical roots for the root system $\A_3$ are:
  \begin{align*}
    &\a_1,\a_2,\a_3,\a_1+\a_2,\a_2+\a_3,\a_1+\a_2+\a_3\\
    &2\a_1,2\a_2,2\a_3\text{ for }p\ne2\\
    &\a_1+2\a_2+\a_3,\a_1+\a_3\\
    &\a_1+q\a_3,q\a_1+\a_3\text{ for }q=p^a>1
  \end{align*}
  The roots in the last row are new to positive characteristic. In
  particular, as opposed to characteristic 0, a fixed group may have
  infinitely many abstract spherical roots.

\end{example}

\section{Proof of the classification}
\label{sec:ProofOfClassification}

This section is devoted to the proof of \cref{thm:MainClassifThm}. Let
$X=G/H$ be spherical of rank $1$. Then $N_\QQ(X)\cong\QQ$ and $\cV(X)$
is either equal to $N_\QQ(X)$ (in case $X$ is horospherical) or a
half\-line (otherwise). Thus, it follows from the theory of spherical
embeddings (see \cite{Hyd}) that $X$ admits a unique smooth
equivariant completion $\Xq$ such that $\partial X:=\Xq\setminus X$ is
pure of codimension one and consists of either two (horospherical
case) or one (otherwise) homogeneous components. The embedding $\Xq$
is maximal in the following sense: Let $X'$ be a complete normal
$G$-variety and $\phi:X'\pfeil\Xq$ a birational $G$-equivariant
morphism. Then $\phi$ is an isomorphism.

\begin{lemma}\label{lem:intermediate}

  Let $H\subset G$ be a spherical subgroup of corank $1$ and let
  $H\subseteq K\subseteq G$. Then either $K/H$ or $G/K$ is a complete
  variety.

\end{lemma}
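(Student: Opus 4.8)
The plan is to compare $X=G/H$ with its image $Y:=G/K$ under the quotient map $\pi\colon X\pfeil Y$ and to read off completeness from the rank. First note that $Y$ is spherical: the image $\pi(Bx_0)=B\pi(x_0)$ is a single $B$-orbit, dense in $Y$, hence open. Pulling back $B$-semiinvariant rational functions gives an inclusion $\Xi(Y)\subseteq\Xi(X)$, so from $\|rk|X=1$ we get $\|rk|Y\in\{0,1\}$. If $\|rk|Y=0$, then $Y$ is a homogeneous spherical variety of rank $0$; by spherical embedding theory its only embedding is $Y$ itself, which is therefore complete (equivalently, $K$ is parabolic and $Y=G/K$ is a generalized flag variety), see \cite{Hyd}. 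This is the second alternative, so I may assume $\|rk|Y=1$ and show that then $K/H$ is complete.

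Choose a complete $G$-equivariant embedding $Y\into\Yq$, which exists by embedding theory (\cite{Hyd}). The morphism $\pi$ defines a rational $G$-map from $\Xq$ to $\Yq$; let $\Gamma\subseteq\Xq\times\Yq$ be the closure of its graph, equipped with the diagonal $G$-action. Then $\Gamma$ is a complete $G$-variety, and the projection $p\colon\Gamma\pfeil\Xq$ is $G$-equivariant, proper, and an isomorphism over the open orbit $X$, hence birational. By the maximality of $\Xq$ recalled above, $p$ is an isomorphism. Composing $p^{-1}$ with the second projection yields a $G$-morphism $\bar\pi\colon\Xq\pfeil\Yq$ extending $\pi$.

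It remains to prove $\bar\pi^{-1}(Y)=X$: granting this, $\pi=\bar\pi|_X$ is the base change of the proper morphism $\bar\pi$ along the open immersion $Y\into\Yq$, hence proper, and so its fibre $\pi^{-1}(eK)=K/H$ is complete. Let $D\subseteq\partial X=\Xq\setminus X$ be a boundary divisor and $v_D$ the associated $G$-invariant valuation of $k(X)$, a nonzero element of $N_\QQ(X)=\QQ$. Since $\|rk|Y=1$, the map $N_\QQ(X)\pfeil N_\QQ(Y)$ dual to $\Xi(Y)\subseteq\Xi(X)$ is an isomorphism, so $v_D$ remains nonzero on $\Xi(Y)$ and is thus nontrivial on $k(Y)$. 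A nontrivial $G$-invariant valuation of the homogeneous space $Y$ can have no center in $Y$, a center being a proper $G$-stable closed subset. As $\bar\pi$ is a morphism, the center of $v_D$ on $\Yq$ equals $\overline{\bar\pi(D)}$, whence $\bar\pi(D)\subseteq\Yq\setminus Y$. Since $\partial X$ is the union of its boundary divisors, this gives $\bar\pi^{-1}(Y)\cap\partial X=\leer$, i.e.\ $\bar\pi^{-1}(Y)=X$, completing the proof.

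The crux is the passage from ``the rank does not drop'' to properness of $\pi$: once the extension $\bar\pi$ is in hand, everything turns on locating the boundary divisors of $\Xq$, and the decisive point is that a $G$-invariant valuation nontrivial on $k(Y)$ has no center on the homogeneous $Y$. The construction of $\bar\pi$ uses only the maximality of the canonical completion $\Xq$, and the two embedding-theoretic inputs — existence of a complete embedding of $Y$, and completeness of rank-$0$ homogeneous spherical varieties — hold in arbitrary characteristic by \cite{Hyd}; no characteristic-specific or case-by-case input is required.
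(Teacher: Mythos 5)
Your proof is correct, but it takes a genuinely different route from the paper's. The paper, following \cite{BriR1}*{1.3~Lemme}, argues in one stroke: if $Y=K/H$ is not complete, it is not closed in the canonical completion $\Xq$ of $G/H$; taking its closure $\Yq$ there, the birational equivariant morphism $G\times^K\Yq\pfeil\Xq$ is an isomorphism by maximality, and inverting it yields an equivariant surjection $\Xq\auf G/K$, so $G/K$ is complete as the image of a complete variety. You instead dichotomize on $\|rk|G/K\in\{0,1\}$, legitimate since pullback gives $\Xi(G/K)\subseteq\Xi(G/H)$: rank $0$ gives completeness of $G/K$ by embedding theory, while in rank $1$ you prove the stronger assertion that $\pi\colon G/H\pfeil G/K$ is proper, by extending it to $\bar\pi\colon\Xq\pfeil\Yq$ via graph closure plus maximality and showing $\bar\pi^{-1}(G/K)=G/H$: each boundary divisor $D$ gives $v_D\in\cV(X)$ nonzero in $N_\QQ(X)\cong\QQ$, hence (rank being preserved) a nontrivial invariant valuation of $k(G/K)$, whose center on $\Yq$ is a proper $G$-stable closed subset and so misses the open orbit. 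The details check out: the graph of $\pi|_X$ is closed in $X\times\Yq$, so the first projection of your $\Gamma$ is an isomorphism over $X$ and birational, and your appeal to maximality uses the property exactly as the paper states it (complete $G$-variety, birational morphism); by contrast, the paper's application of maximality to $G\times^K\Yq$ really invokes a proper--birational variant, since completeness of that induced space is equivalent to the completeness of $G/K$ being proved (the author does use the proper--birational form later, in the non-reductive classification, where the base $G/P$ is complete). What each approach buys: the paper's argument is three lines and exhibits $\Xq$ directly as fibered over $G/K$; yours is longer but isolates where the dichotomy comes from (the rank of $G/K$), yields properness of $\pi$ itself in the rank-one case, and runs only on the injectivity $\cV(X)\into N_\QQ(X)$ and the standard center argument for invariant valuations --- and in the rank-one case it could be streamlined further by taking $\Yq$ to be the canonical completion of $G/K$ itself.
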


\begin{proof}[Proof {\textrm(same as \cite{BriR1}*{1.3~Lemme})}]

  Assume $Y:=K/H$ is not a complete variety. Then it is not closed in
  $\Xq$. Let $\Yq$ denote its closure. Then $G\times^K\Yq\pfeil\Xq$ is
  an isomorphism, by maximality. Thus, by inverting this morphism, we
  obtain an equivariant morphism $\Xq\auf G/K$ which implies that
  $G/K$ is a complete variety.
\end{proof}

A first application is the

\begin{proof}[Proof of \cref{lem:Rk1Semisimple}]

  Let $H\subset G$ be spherical of corank $1$, let $Z\subseteq G$ be
  the connected center of $G$, and put $K:=ZH$. Then $H$ is normal in
  $K$ with $K/H=Z/(Z\cap H)$ a torus. Thus, if $G/K$ is complete then
  $K$ is a parabolic in $G$ and $H$ is induced from $K/H$. Cuspidality
  implies $G=K/H\cong\Gm$. If, on the other hand, $K/H$ is complete
  then $H=K$, i.e., $Z\subseteq H$. But then $Z=1$ since $H$ is
  cuspidal, i.e., $G$ is semisimple.
\end{proof}

From here to the end of this section, $G$ will denote a semisimple
group of adjoint type (unless stated otherwise). Moreover, $H\subset
G$ is spherical of corank $1$. The classification forks then into two
cases:

\begin{lemma}\label{lem:fork}

  Let $H\subset G$ be cuspidal. Then $H$ is reductive or there is a
  parabolic subgroup $P\subset G$ with $P=HP_u$.

\end{lemma}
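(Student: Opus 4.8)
The plan is to split according to whether the unipotent radical $H_u$ of $H$ is trivial. If $H_u=1$ then $H$ is reductive and we are in the first alternative, so assume $H_u\ne1$. Since $H_u$ is normal in $H$ we have $H\subseteq N_G(H_u)$, and because $G$ is reductive the theorem of Borel--Tits provides a \emph{proper} parabolic subgroup $P\subset G$ with $N_G(H_u)\subseteq P$ and $H_u\subseteq P_u$; in particular $H\subseteq P$. Writing $L=P/P_u$ and $\pi\colon P\auf L$ for the Levi quotient, the goal becomes to show $\pi(H)=L$, for then $P=\pi^{-1}(L)=HP_u$, which is exactly the second alternative.

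To produce this surjectivity I would feed the intermediate group $M:=HP_u$ into \autoref{lem:intermediate}. Since $H\subseteq M\subseteq G$, that lemma forces either $M/H$ or $G/M$ to be complete. The first possibility is easy to rule out: as $M=P_uH$, the unipotent group $P_u$ acts transitively on $M/H$ with isotropy group $P_u\cap H$, so $M/H\cong P_u/(P_u\cap H)$ is affine and connected, hence complete only when it is a point, i.e.\ when $P_u\subseteq H$. But $P_u\subseteq H\subseteq P$ with $P$ a proper parabolic is precisely what cuspidality forbids. Hence $M/H$ is not complete, and \autoref{lem:intermediate} yields that $G/M$ is complete.

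It remains to exploit completeness of $G/M$. Since $P_u\subseteq M\subseteq P$, the projection $G/M\pfeil G/P$ is a fibration with fibre $P/M\cong L/\pi(H)$; as both $G/M$ and $G/P$ are complete, so is $L/\pi(H)$. Thus $\pi(H)$ contains a Borel subgroup of $L$ and is therefore a parabolic subgroup of $L$. On the other hand $\pi$ kills $H_u\subseteq P_u$, so $\pi(H)$ is a quotient of the reductive group $H/H_u$ and is itself reductive; a reductive parabolic subgroup of $L$ must equal $L$. Hence $\pi(H)=L$ and $P=HP_u$, as claimed. I expect the only delicate point to be the invocation of Borel--Tits in arbitrary characteristic, namely securing a proper parabolic containing $N_G(H_u)$ with $H_u$ inside its unipotent radical; once that parabolic is in hand, the two completeness alternatives of \autoref{lem:intermediate} do all the remaining work.
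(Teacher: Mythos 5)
Your proof is correct, and its skeleton is the paper's: both pass through Borel--Tits to put $H$ inside a parabolic $P$ with $H_u\subseteq P_u$, and then run \autoref{lem:intermediate} on the intermediate group $K=HP_u$, ruling out completeness of $K/H$ by the affineness of $P_u/(H\cap P_u)$ together with cuspidality. Where you genuinely diverge is in how the remaining branch is closed. The paper chooses $P$ \emph{minimal} among parabolics with $H\subseteq P$ and $H_u\subseteq P_u$; when $G/K$ is complete, $K$ is itself a parabolic satisfying these same two conditions (its unipotent radical contains $H_uP_u$), so minimality forces $K=P$ in one line, with no analysis in the Levi quotient. You avoid the minimal choice altogether: having split off $H_u=1$ at the start (the paper instead extracts the reductive alternative from the other branch, where $P_u\subseteq H$ and cuspidality give $P=G$, hence $H_u\subseteq P_u=1$), you take the canonical proper Borel--Tits parabolic of $H_u\ne1$ and prove $\pi(H)=L$ directly, via completeness of the fibre $P/M\cong L/\pi(H)$ of $G/M\to G/P$, the Borel fixed point theorem, and the observation that a reductive parabolic subgroup of $L$ must be all of $L$. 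Both closings are sound; the paper's minimality trick is shorter, while yours makes the mechanism explicit and needs no minimizing choice. One small point worth adding in your write-up: $H$ need not be connected here (the classification includes groups of the form $\<s\>\cdot H_0$), so in the reductivity step you should remark that $\pi(H)$, being parabolic, is connected and hence equals $\pi(H^0)$, which is a quotient of the reductive group $H^0/H_u$; with that remark your argument covers the general case exactly as the paper's does.
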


\begin{proof}[Proof \rm(similar to \cite{BriR1}*{1.3 Th\'eor\`eme})]

  By a theorem of Borel-Tits \cite{BorTits}*{3.1}, there is a
  parabolic subgroup $P\subseteq G$ with $H\subseteq P$ and
  $H_u\subseteq P_u$. Choose $P$ minimal with this property and put
  $K=HP_u$. Then either $G/K$ or $K/H$ is complete.

  In the first case, $K$ is parabolic in $G$. Since $H\subseteq
  K\subseteq P$ and $H_u\subseteq H_uP_u=K_u$ we get $K=P$ by
  minimality of $P$.

  In the second case, $K/H=P_u/(H\cap P_u)$ is affine, connected, and
  complete, hence trivial. This and cuspidality imply $P=G$ and
  $H_u=P_u=1$, i.e., $H$ is reductive.
\end{proof}

Before we proceed we transfer a result of Dynkin
\cite{Dyn}*{15.1~Theorem} for semisimple Lie algebras in
characteristic $0$ to semisimple groups in arbitrary
characteristic. The proof is basically the same.

\begin{lemma}\label{lem:Dynkin}

  Let $G$ be a semisimple group with $G_1,\ldots, G_s$ its simple
  normal subgroups. Let $H\subset G$ be maximal among connected
  reductive proper subgroups. Then either

  \begin{enumerate}

  \item

    $H=H_i\prod_{j\ne i}G_j$ for some $i$ where $H_i\subset G_i$ is a
    maximal connected reductive proper subgroup or

  \item

    $H=H_{ij}\prod_{l\ne i,j}G_l$ for some $i\ne j$ where
    $H_{ij}\subset G_iG_j$ is a connected subgroup such that the
    projections $H_{ij}\pfeil G_i/(G_i\cap G_j)$ and $H_{ij}\pfeil
    G_j/(G_i\cap G_j)$ are isogenies.

  \end{enumerate}

\end{lemma}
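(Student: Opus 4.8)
The plan is to run Dynkin's argument with Lie-algebra ideals replaced by the following purely group-theoretic observation, valid in every characteristic: if $N\subseteq G$ is a connected normal reductive subgroup and $K\subseteq G$ is connected reductive, then $KN$ is again connected reductive. Indeed $KN$ is connected, $KN/N\cong K/(K\cap N)$ is reductive, and the unipotent radical of $KN$ maps trivially to $KN/N$, hence lies in $N$; being normal and unipotent it lies in the trivial unipotent radical of $N$. The same computation shows that the preimage in $G$ of a connected reductive subgroup of a quotient $G/N$ (with $N$ a product of some of the $G_i$) is again connected reductive. Applying this with $N$ a product of simple factors turns maximality of $H$ into a rigid dichotomy, which drives the whole proof.

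Concretely, I would write $p_i\colon G\auf\Gq_i:=G/\prod_{j\ne i}G_j$ for the projection onto the $i$-th simple quotient and, for each $i$, consider $K_i:=H\cdot\prod_{j\ne i}G_j$. By the observation $K_i$ is connected reductive and contains $H$, so maximality forces $K_i=H$ or $K_i=G$; moreover $K_i=G$ exactly when $p_i(H)=\Gq_i$, while $K_i=H$ exactly when $G_j\subseteq H$ for all $j\ne i$. Let $A$ be the set of indices with $K_i=H$. If $i,i'\in A$ are distinct then $G_j\subseteq H$ for every $j$, forcing $H=G$; hence $|A|\le1$. If $A=\{i_0\}$ then $\prod_{j\ne i_0}G_j\subseteq H$ and $p_{i_0}(H)$ is proper in $\Gq_{i_0}$; since $\ker p_{i_0}=\prod_{j\ne i_0}G_j\subseteq H$ we get $H=p_{i_0}^{-1}(p_{i_0}(H))$, and the preimage correspondence shows $H_{i_0}:=p_{i_0}(H)$ is maximal connected reductive proper in $G_{i_0}$ (equivalently in $\Gq_{i_0}$). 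This is the first alternative.

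The remaining case is $A=\leer$, i.e. $H$ surjects onto every simple factor. Now I would bring in the single-factor enlargements $H\cdot G_i$: these too are connected reductive and lie between $H$ and $G$, so either $G_i\subseteq H$ or $H\cdot G_i=G$. Let $B=\{i:G_i\subseteq H\}$; its complement is nonempty (else $H=G$) and, because $A=\leer$, has at least two elements. As $\prod_{i\in B}G_i\subseteq H$ is the kernel of $G\auf\prod_{i\notin B}\Gq_i$, I can replace $G$ by $G'':=\prod_{i\notin B}\Gq_i$ and $H$ by its image $H'$; the preimage correspondence shows $H'$ is again maximal connected reductive proper, it surjects onto each factor, contains none of them, and satisfies $H'\cdot\Gq_i=G''$ for every remaining $i$.

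On $G''$ I would invoke the group-theoretic Goursat lemma for products of almost-simple groups: a connected subgroup surjecting onto each factor equals a product $\prod_m\Delta_m$ of \emph{diagonal} subgroups attached to a partition into blocks, where inside each block the factors are pairwise isogenous and $\Delta_m$ is the graph of a compatible system of isogenies. The two-factor case that drives the induction is elementary: the kernel of a projection $H'\pfeil\Gq_2$ is a normal subgroup of the simple factor $\Gq_1$, hence finite or everything, giving either the full product or the graph of an isogeny. Since $H'$ contains no factor, every block has size $\ge2$; and the constraint $H'\cdot\Gq_i=G''$, i.e. that deleting any one coordinate still surjects, forces all other blocks to be singletons, which is impossible unless there are none. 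Hence there is a single block of size exactly two, $|B^c|=2$, and $H'$ is the graph of an isogeny between two isogenous factors; unwinding the reduction gives the second alternative, with $H_{ij}$ the diagonal and $\prod_{l\ne i,j}G_l\subseteq H$. The main obstacle is precisely this Goursat step: Dynkin phrases it through semisimple Lie-algebra ideals, which misbehave in small characteristic, so it must be reproved at the group level. Fortunately the two-factor kernel argument is characteristic-free (inseparable isogenies included, as the statement already anticipates), and the induction on the number of factors then goes through unchanged.
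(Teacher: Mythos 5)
Your proposal is correct, and in the decisive case it takes a genuinely different route from the paper. The treatment of the first alternative is the same in substance: your $K_i=H\cdot\prod_{j\ne i}G_j$ is exactly the subgroup $H_i\prod_{j\ne i}G_j$ against which the paper plays maximality, and your ``$KN$ is connected reductive'' observation is what the paper uses implicitly (images of connected reductive groups are connected reductive). The divergence is in the case where $H$ surjects onto every factor. There the paper follows Dynkin and decomposes the connected reductive group $H$ \emph{itself} into its simple normal subgroups: each $G_i$ is covered by one simple factor of $H$ while all other factors of $H$ map trivially into $G_i$, some factor $N$ must cover two factors $G_i,G_j$ (else $H=G$), and a single application of maximality to $\pi_{ij}(N)\prod_{l\ne i,j}G_l$ finishes. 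You never look inside $H$; instead you spend maximality several more times (on $H\cdot G_i$ and on the reduction to $G''$) to reach a subdirect product containing no factor and satisfying $H'\cdot\Gq_i=G''$, and then appeal to a Goursat-type structure theorem for connected subgroups of products of simple algebraic groups; your block-counting from $H'\cdot\Gq_i=G''$ correctly pins down a single block of size two. Two caveats on that step. First, in positive characteristic the Goursat conclusion must be phrased as ``every projection of $\Delta_m$ is an isogeny,'' not ``$\Delta_m$ is the graph of an isogeny'': isogenies need not be invertible, so a diagonal such as the image of $\SL(2)$ in $\PGL(2)\times\SL(2)$ under (central quotient, Frobenius) is not a graph over either factor. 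Since the lemma's second alternative claims only that the two projections are isogenies, and your two-factor kernel argument proves exactly that, this is a wording issue rather than a gap. Second, the induction step of that Goursat lemma is asserted rather than proved; writing it out requires that connected normal subgroups of a semisimple group are products of its almost simple factors --- essentially the same structural input that the paper applies directly to $H$. So your argument is sound modulo a standard deferred lemma, but it is longer than the paper's, which extracts the candidate $H_{ij}$ in one step from the internal structure of $H$.
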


\begin{proof}

  Without loss of generality we may assume $G=G_1\times\ldots\times
  G_s$ with $s\ge2$. Suppose that one of the projections $H\pfeil G_i$
  is not surjective and denote its image by $H_i$. Then $H$ is
  contained in, hence is equal to $H_i\prod_{j\ne i}G_j$.

  Now assume that all projections $H\pfeil G_i$ are surjective. Then
  for every $i$ there is a simple normal subgroup $N\subseteq H$ such
  that the composition $N\into H\into G\auf G_i$ is an isogeny. All
  other factors of $H$ are then mapped to the trivial group in
  $G_i$. Furthermore, there must be one factor $N$ of $H$ which is
  being mapped surjectively to more than one factor, say $G_i$ and
  $G_j$, of $G$ since otherwise $H=G$. Put $H_{ij}=N$. Thus, $H$ is
  contained in, hence is equal to $H_{ij}\prod_{l\ne i,j}G_l$.
\end{proof}

\begin{lemma}

  Assume $G$ is not simple and that $H$ is cuspidal and
  reductive. Then $G/H$ is isomorphic to
  $(\PGL(2)\times\PGL(2))/(\|id|\times F_q)\PGL(2)$ where $F_q$ is a
  Frobenius morphism.

\end{lemma}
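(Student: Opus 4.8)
The plan is to show that $H$ is maximal among connected reductive proper subgroups of $G$, and then to read off its shape from \autoref{lem:Dynkin}. We may assume $H$ is connected, the disconnected case being governed by $H^\circ$ together with the normalizer bookkeeping mentioned after \autoref{thm:MainClassifThm}. So suppose $H\subseteq M\subsetneq G$ with $M$ connected reductive. As $H$ and $M$ are both reductive, Matsushima's criterion makes $M/H$ affine; since it is also connected, \autoref{lem:intermediate} forces \emph{either} $M/H$ to be complete---hence finite, hence a single point, so $M=H$---\emph{or} $G/M$ to be complete. In the latter case $M$ is parabolic, and a reductive parabolic equals $G$, contradicting $M\subsetneq G$. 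Thus no connected reductive subgroup lies strictly between $H$ and $G$, i.e.\ $H$ is maximal among connected reductive proper subgroups of $G$.

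Now I apply \autoref{lem:Dynkin}, writing $G=G_1\times\dots\times G_s$ with $s\ge2$. In its first case one would have $H=H_i\prod_{j\ne i}G_j$, so some $G_j$ with $j\ne i$ is a nontrivial connected normal subgroup of $G$ contained in $H$, contradicting cuspidality. Hence the second case applies: $H=H_{ij}\prod_{l\ne i,j}G_l$ where the two projections $H_{ij}\pfeil G_i$ and $H_{ij}\pfeil G_j$ are isogenies (recall $G_i\cap G_j=1$ as $G$ is a direct product). Cuspidality again kills every factor $G_l$ with $l\ne i,j$, forcing $s=2$; after relabelling, $H\subseteq G_1\times G_2$ and both projections $H\pfeil G_1$, $H\pfeil G_2$ are isogenies. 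In particular $\|dim|G_1=\|dim|H=\|dim|G_2$, and $H$ is the graph of an isogeny between $G_1$ and $G_2$.

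It remains to pin down the factors and the isogeny. The groups $H$, $G_1$, $G_2$ are pairwise isogenous, hence of equal rank $r$, and since $H$ is a (possibly inseparably twisted) diagonal, $G/H$ is the associated group-type variety, whose rank equals $r$ and is unchanged by the twisting. From $\|rk|G/H=1$ we get $r=1$, so $G_1\cong G_2\cong\PGL(2)$. The two projections are then isogenies $\PGL(2)\pfeil\PGL(2)$; being endomorphisms of a centreless adjoint group of type $\A_1$, each is bijective and purely inseparable, i.e.\ an inner automorphism composed with a power of Frobenius $F_{q_i}$, $q_i=p^{a_i}$. Conjugating $H$ by a suitable element of $G_1\times G_2$ absorbs both inner parts, leaving $H=\{(F_{q_1}(g),F_{q_2}(g))\mid g\in\PGL(2)\}$. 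Assuming $a_1\le a_2$ and reparametrising by $x=F_{q_1}(g)$ turns this into the graph $\{(x,F_q(x))\}$ with $q=p^{a_2-a_1}$, that is $H=(\|id|\times F_q)\PGL(2)$, as claimed.

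The main obstacle is the last identification in positive characteristic: when a projection $H\pfeil G_i$ is inseparable it has no inverse \emph{morphism}, so $H$ is not literally the graph of a morphism $G_1\pfeil G_2$, and one cannot simply invert one projection. The point is that both twists are powers of $p$, so one divides the other and $F_{q_2}=F_{q}\circ F_{q_1}$ \emph{is} a morphism after composing through $F_{q_1}$; combined with the fact that every automorphism of $\PGL(2)$ is inner (hence removable by conjugation), this reduces the pair $(F_{q_1},F_{q_2})$ to the single Frobenius $F_q$ appearing in the statement.
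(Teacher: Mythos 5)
Your proposal follows the same route as the paper's proof: maximality of $H$ among connected reductive proper subgroups via \autoref{lem:intermediate} (the paper cites that lemma tersely; your appeal to the Matsushima--Richardson criterion, which is valid in all characteristics, is a reasonable way to make the dichotomy precise, though one can also argue that $M/H$ complete with $H$ reductive forces $H$ parabolic in $M$, hence $H=M$), then \autoref{lem:Dynkin} together with cuspidality to reduce to $G=G_1\times G_2$ with both projections of $H$ isogenies, then a rank argument to force type $\A_1$, and finally the identification of the two isogenies as Frobenius powers with one of them removable by conjugation and reparametrization. The disconnected case ($N_G(H^0)=H^0$) is also handled as in the paper, if only by reference.

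There is, however, one genuine gap: the sentence ``$G/H$ is the associated group-type variety, whose rank equals $r$ and is unchanged by the twisting'' asserts exactly the point that needs proof, and it is where the paper does real work. Knowing only that the projections $\phi_i:H\pfeil G_i$ are (possibly inseparable) isogenies, the paper constructs a finite morphism $(H\times H)/\Delta(H)\pfeil G/H$ induced by the isogeny $\phi_1\times\phi_2:H\times H\pfeil G$, uses that rank is insensitive to finite equivariant morphisms (the weight lattices become commensurable), and only then invokes the standard fact that the rank of $(H\times H)/\Delta(H)$ equals $\|rk|H$ as a group. Your phrase ``unchanged by the twisting'' is true, but its justification \emph{is} this finite-morphism argument; without it the deduction $r=1$, hence $G_i\cong\PGL(2)$, is unsupported. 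A second, smaller omission: before treating the projections as ``endomorphisms of a centreless adjoint group'' you must rule out $H\cong\SL(2)$. The paper does this by noting that any isogeny $\SL(2)\pfeil\PGL(2)$ kills the schematic center of $\SL(2)$, so $\phi_1\times\phi_2$ could not be an embedding of $H$ into $G$ --- an argument that genuinely requires the scheme-theoretic center when $p=2$, since then $\SL(2)$ has trivial center as an abstract group.
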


\begin{proof}[Proof \rm(similar to \cite{BriR1}*{2.2 Corollaire})]

  Assume first that $H$ is connected. Then $H$ is maximal among all
  connected reductive subgroups of $G$
  (\cref{lem:intermediate}). Since $H$, being cuspidal, does not
  contain any simple factor of $G$, \cref{lem:Dynkin} implies that
  $G$, being of adjoint type, is the direct product of two simple
  factors, $G=G_1\times G_2$, and that $\phi_i:H\pfeil G_i$ is an
  isogeny. Thus, there is a finite morphism $H\times H/\Delta(H)\pfeil
  G/H$ which implies that $H$, as an $H\times H$-variety, is of rank
  $1$. But this rank equals the rank of $H$ as a group. We conclude
  that the $G_i$ are semisimple, of rank one, and of adjoint type and
  therefore isomorphic to $\PGL(2)$. Any isogeny from $\SL(2)$ to
  $\PGL(2)$ contains the (schematic) center of $\SL(2)$ in its kernel.
  Thus, since $\phi_1\times\phi_2$ is an embedding we see that also
  $H\cong \PGL(2)$, as well. Moreover, $\phi_1$ and $\phi_2$ must be
  (conjugate to) powers of the Frobenius morphism and one of them is
  the identity. Thus, up to a switch of factors, we have
  $G=\PGL(2)\times\PGL(2)$ and $H=(\|id|\times F_q)\PGL(2)$.

  Finally, let $H$ be not necessarily connected. Then
  $H^0=(\|id|\times F_q)\PGL(2)$ and $H^0\subseteq H\subseteq
  N_G(H^0)=H^0$, i.e., $H=H^0$.
\end{proof}

\begin{lemma}

  Assume that $G$ is simple and $H$ is cuspidal and reductive. Then
  $(G,H)$ appears in the table~\S\ref{sec:TABLE}.

\end{lemma}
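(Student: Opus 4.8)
The plan is to mirror the strategy of the preceding lemma: first reduce to the case that $H$ is connected, then show that $H$ must be maximal among connected reductive proper subgroups, and finally run the classification of such subgroups of a simple group against the rank-$1$ sphericity condition. For the reduction, note that for reductive $H$ the first cuspidality condition is automatic (since $H_u=1$ forces $P_u=1$, i.e.\ $P=G$), so cuspidality just says $H\ne G$; hence $H^0$ is again cuspidal. Moreover $G/H^0\auf G/H$ is a finite surjective $B$-equivariant morphism, so $G/H^0$ is again spherical of rank $1$ (a dense $B$-orbit pulls back to a dense $B$-orbit, and the weight lattice changes only by finite index). Thus I may classify $H^0$ first and recover $H$ at the end from the inclusion $H^0\subseteq H\subseteq N_G(H^0)$; the disconnected possibilities are exactly the normalizer entries $\<s\>H_0$ of the table, cf.\ the Remark after \autoref{thm:MainClassifThm}.

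Assume now $H$ is connected reductive and proper in the simple group $G$. I would show $H$ is maximal among connected reductive proper subgroups. Indeed, let $H\subseteq K\subseteq G$ with $K$ connected reductive and $K\ne G$. By \autoref{lem:intermediate}, either $K/H$ or $G/K$ is complete. Since $K$ is reductive and proper it is not parabolic (a proper parabolic has nontrivial unipotent radical), so $G/K$ cannot be complete; hence $K/H$ is complete. But $K/H$ is affine, because the quotient of a reductive group by a reductive subgroup is affine in every characteristic (Matsushima--Richardson). A connected, complete, affine variety is a point, so $K=H$. This establishes maximality.

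It then remains to list the maximal connected reductive proper subgroups $H$ of a simple group $G$ and to single out those for which $G/H$ is spherical of rank exactly $1$. Here I would invoke the classification of such subgroups: the maximal-rank (subsystem) subgroups given by the extended Dynkin diagram, and the irreducible maximal subgroups. For each candidate one computes $\Xi(G/H)$ and checks whether $B$ has a dense orbit; the rank-$1$ constraint is extremely restrictive and eliminates all but the short list appearing in the table~\S\ref{sec:TABLE}, at which point the coefficients of $\sigma$ and the black dots $S^\tP/(G/H)$ can be read off directly. I expect the main obstacle to be precisely this case-by-case verification in arbitrary characteristic, where small-characteristic phenomena produce extra maximal subgroups and Frobenius twists that have no characteristic-zero analogue. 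As advertised in the introduction, I would handle these by reducing each such case to a known characteristic-zero configuration via an inseparable isogeny, using that cuspidality, sphericity, and rank are all preserved under isogenies; this is what guarantees that no genuinely new entries arise beyond those already in the table.
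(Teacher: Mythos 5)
Your preliminary reductions are sound and consistent with the paper: the reduction to connected $H$ and the recovery of the disconnected cases from $H^0\subseteq H\subseteq N_G(H^0)$ match the paper's closing remark that "the non-connected cases are easily determined by computing normalizers," and your maximality argument via \autoref{lem:intermediate} (if $G/K$ were complete then the reductive proper $K$ would be parabolic, impossible; while $K/H$ is affine by geometric reductivity, so complete, affine and connected forces $K=H$) is exactly how the paper establishes maximality in the adjacent lemma treating non-simple $G$. The trouble is your third step, which is where essentially all of the content lies and which you only announce. The paper explicitly \emph{refrains} from this route --- i.e., from generalizing Brion's case analysis \cite{BriR1}*{2.3, 2.4} --- and instead invokes the Knop--R\"ohrle classification \cite{KnRoe} of \emph{all} connected reductive spherical subgroups of simple groups in arbitrary characteristic. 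The upshot of \cite{KnRoe} is that, up to isogeny of $G$, the only case absent from Kr\"amer's characteristic-zero list \cite{Krae} is $H=\G_2\times\SL(2)\subset G=\Sp(8)$ with $p=2$; since that case has rank $3$, every rank-$1$ pair appears in Kr\"amer's, hence Akhiezer's \cite{Akh}, list up to isogeny.

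The concrete gap is that your closing principle --- "cuspidality, sphericity, and rank are preserved under isogenies, and this is what guarantees that no genuinely new entries arise" --- proves nothing of the sort. Invariance under isogeny lets you transport \emph{known} cases along isogenies (this is indeed how the paper's table entries for $p=2,3$ arise, e.g.\ via the special isogenies of $\B_n/\C_n$ and of $\G_2$, $\F_4$), but it cannot rule out a connected reductive spherical subgroup in small characteristic that is isogenous to nothing liftable: $\G_2\times\SL(2)\subset\Sp(8)$ in characteristic $2$ is precisely such a subgroup, and it is excluded from the lemma only because its rank happens to be $3$ --- a fact one must compute, not deduce formally. Moreover, actually executing your plan would require the classification of maximal closed connected subgroups in positive characteristic (Seitz, Liebeck--Seitz) together with sphericity and rank computations for every candidate, including Frobenius-twisted embeddings and reductive subgroups that are maximal among reductives yet lie in proper parabolics (such as $\GL(n-1)\subset\PGL(n)$), which your two proposed families --- subsystem subgroups from the extended Dynkin diagram and irreducible maximal subgroups --- do not obviously capture. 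That body of work is, in substance, what \cite{KnRoe} supplies; without citing it or redoing it, your step 3 asserts the expected answer rather than proving it.
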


\begin{proof}

  Assume first, that $H$ is connected. Then we refrain from
  generalizing the arguments of Brion in \cite{BriR1}*{2.3,
    2.4}. Instead, with \cite{KnRoe} we have now a classification of
  \emph{all} connected spherical reductive sub\-groups of simple
  groups at our disposal. This generalizes Krämer's
  classification~\cite{Krae} in characteristic zero. In a nutshell,
  the outcome of \cite{KnRoe} is that, up to an isogeny of $G$, there
  appears only one more case in positive characteristic, namely
  $H=\G_2\times\SL(2)\subset G=\Sp(8)$ in $\|char|k=2$. But that case
  has rank $3$ by \cite{KnRoe}*{Prop.~4.5}. As a result, all pairs
  $H\subset G$ appear in Krämer's list and therefore must appear in
  Akhiezer's list \cite{Akh}, as well (always up to an isogeny of
  $G$).

  If $H$ is non-connected, then it is a subgroup of the normalizer
  $N=N_G(H^0)$ of its connected component. Thus we have compute the
  normalizers of all connected subgroups in the list. The result will
  be that $H^0$ is of index at most two in its normalizer. This
  implies $H=N$ whenever $N\ne H^0$. These cases are indicated by the
  presence of $\langle s_\alpha\rangle$.

  There are three main cases to consider. The first are those cases
  which are special to characteristic $2$ and $3$. For all of them
  there is a bijective isogeny to a case defined over $\ZZ$. The same
  holds for $\PGL(2)$ embedded diagonally into $\PGL(2)\times
  \PGL(2)$. Clearly, the normalizers are not affected. This way, we may
  assume that $H\subset G$ is defined over $\ZZ$.

  Then we consider the equal rank case, i.e., where $H$ contains a
  maximal torus $T$ of $G$. In that case, every element of $N/H$ is
  represented by an element of the Weyl group, which normalizes the
  root system of $H$. This means that $N/H$ is independent of the
  characteristic. Thus we may just copy the results from
  characteristic zero.

  In the remaining cases, all automorphisms
  of $H$ are inner. This implies $N=CH$ where $C$ is the centralizer
  of $H$ in $G$. We argue that $C=1$ and therefore $N=H$ in all
  cases.

  Now we discuss the cases separately. First, let $H=\G_2$ in
  $G=\SO(7)$. If $p\ne2$, the representation of $H$ on $k^7$ is
  irreducible. Thus $C$ consists only of scalars and therefore
  $C=1$. For $p=2$, the pair $H\subset G$ is isogenous to
  $\G_2\subset\PSp(6)$. Again the irreducibility of the $\G_2$-module
  $k^6$ implies $C=1$.

  All other cases are of the form $H=\SO(2n-1)$ in $G=\PSO(2n)$ with
  $n\ge2$ (the case $n=2$ and $n=3$ are the same as
  $\PGL(2)\subset\PGL(2)^2$ and $\PSp(4)\subset\PGL(4)$,
  respectively). It suffices to show that $\tilde C$, the centralizer
  of $\SO(2n-1)$ in $\SO(2n)$ consists only of scalars. Let
  $u\in\tilde C$ be unipotent and let $v_0\in k^{2n}$ be the vector
  fixed by $\SO(2n-1)$. Since $v_0$ is unique up to a scalar, it
  follows that $uv_0=v_0$ and therefore $u\in\SO(2n-1)$. Because the
  center of $\SO(2n-1)$ is trivial, we conclude $u=1$. Thus, $\tilde
  C$ consists only of semisimple elements. If $p=2$ this implies
  $\tilde C=1$ since $k^{2n}$ is indecomposable as
  $\SO(2n-1)$-representation. If $p\ne2$ then $k^{2n}=k\oplus
  k^{2n-1}$. Thus, if $s\in\tilde C$ then
  $s=\operatorname{diag}(\lambda,\mu,\ldots,\mu)$ with
  $\lambda^2=\mu^2=\lambda\mu^{2n-1}=1$. This implies
  $\lambda=\mu\in\{\pm1\}$. Thus $s$ is a scalar.
\end{proof}

According to \cref{lem:fork}, the last remaining step in the
classification is:

\begin{lemma}

  Assume that $H$ is cuspidal and that there is a parabolic subgroup
  $P\subset G$ with $P=HP_u$. Then $(G,H)$ is either isomorphic to
  $(\PGL(n),\GL(n-1)), n\ge2$ or one of the non-reductive pairs in the
  table~\S\ref{sec:TABLE}.

\end{lemma}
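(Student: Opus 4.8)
The plan is to analyze a cuspidal spherical subgroup $H$ of corank $1$ with a parabolic $P \subset G$ satisfying $P = HP_u$. Since $P = HP_u$, the unipotent radical $P_u$ is contained in $H$, so $H/(H \cap L)$ maps isomorphically onto $P_u$ under... wait, more precisely, setting $L$ to be a Levi subgroup of $P$, the condition $P = HP_u$ means $H$ projects onto $L = P/P_u$. I would first observe that since $H$ is cuspidal, no parabolic $Q \subsetneq G$ can satisfy $Q_u \subseteq H \subseteq Q$; in particular $P$ itself cannot have $P_u \subseteq H$ unless $P = G$. This is the key tension: $P = HP_u$ does not immediately give $P_u \subseteq H$, but it does force the quotient $H \cap P_u \backslash P_u$ to be controlled. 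I would examine $\bar H := H \cap L$, the reductive part, and use the rank-$1$ condition to pin down its structure.

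\medskip

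The central computation is to determine the rank of $G/H$ in terms of the induction data. Since $P = HP_u$, the variety $G/H$ fibers over $G/P$ with fiber $P/H \cong P_u/(H \cap P_u)$. The corank-$1$ assumption means $\|rk| G/H = 1$, and I would compute the contribution to the rank coming from the ``fiber direction'' versus the base. The horospherical situation arises precisely when $H = \|ker|\chi$ for a character $\chi$ of $P$; this is the non-reductive case. To separate $(\PGL(n), \GL(n-1))$ from the horospherical cases, I would use \autoref{thm:MainClassifThm}\ref{it:MainClassifThm.b} together with the structure of $S^\tP/$: the condition $P = HP_u$ restricts which simple roots can be of type \tP/, and cuspidality (via condition (1), that the only parabolic with $P_u \subseteq H \subseteq P$ is $P = G$) forces $P$ to be a \emph{maximal} parabolic or rules out intermediate possibilities entirely.

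\medskip

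The hard part will be showing that the reductive Levi part $\bar H \subseteq L$ is severely constrained: essentially $\bar H$ must act on $P_u/(H \cap P_u)$ with a dense orbit under $B \cap L$, and the rank-$1$ condition on $G/H$ translates into a rank condition on this linear action. I expect this to reduce, after an isogeny and passing to the adjoint quotient, to the action of a Levi of a maximal parabolic on an abelian (or two-step) unipotent radical having a rank-$1$ invariant. The classification of maximal parabolics whose Levi acts on $P_u$ with so few invariants is what singles out type $\A_{n-1}$ with the parabolic of a single endpoint node, giving $L = \GL(n-1)$ acting on $k^{n-1} = P_u$; the stabilizer construction then yields $H = \GL(n-1) \subset \PGL(n)$. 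The remaining non-reductive cases are the horospherical ones $H = \|ker|\chi$ plus a short list where $P_u$ is non-abelian, which I would read off from the table directly rather than re-derive. I would close by verifying cuspidality of $(\PGL(n), \GL(n-1))$ explicitly, checking that no proper parabolic $Q$ has $Q_u \subseteq \GL(n-1)$ and no nontrivial connected normal subgroup of $\PGL(n)$ lies in $\GL(n-1)$.
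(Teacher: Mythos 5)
Your proposal contains a genuine structural error and, as a result, is circular precisely where the lemma has content. You assert that the non-reductive outcomes are ``the horospherical ones $H=\ker\chi$,'' but horospherical subgroups of corank $1$ are by definition parabolically induced from a torus, so for $G$ semisimple they are never cuspidal; the standing cuspidality hypothesis rules them out from the start, and indeed the paper's proof uses non-horosphericity exactly to produce a simple root $\a$ with $U_\a\not\subseteq H$. The actual non-reductive pairs in the table (e.g.\ $P_n(\SO(2n))\subset\SO(2n+1)$ or $\GL(2)_{\rm long}U_{2\a_1+\a_2}U_{3\a_1+\a_2}U_{3\a_1+2\a_2}\subset\G_2$) are not kernels of characters of parabolics. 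Worse, your plan to ``read off from the table directly rather than re-derive'' these cases is circular: this lemma \emph{is} the completeness proof for that portion of the table, so the table cannot be cited as input.

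Two technical pillars are also missing. First, you work with $\bar H=H\cap L$ but never establish the key structural fact that, up to conjugacy, $H$ contains a full Levi: the paper lifts the central torus $Z$ of $L$ into $H$ (possible since $G$ is adjoint, so $Z$ is a torus) and uses $L=C_P(Z)$ to conclude $H=L\semidir H_u$ with $H_u\subseteq P_u$; without this, $H$ need not be compatible with the Levi decomposition and your analysis of the fiber $P_u/(H\cap P_u)$ does not get started. Second, your classification engine --- ``maximal parabolics whose Levi acts on $P_u$ with so few invariants'' --- is not an available result, especially in positive characteristic where Frobenius twists intervene. The paper instead uses the maximal smooth completion $\Xq$: the closure $\Yq$ of $Y=P/H$ has exactly two $P$-orbits, $P_u$ acts trivially and $L$ transitively on the boundary, and the curve $C=U_\a H/H$ yields that $L$ is transitive on $Y\setminus\{0\}$; hence $P$ acts doubly transitively on $Y$, and \cite{KnMT}*{Satz 2} classifies such actions, giving $Y\cong{\bf A}^n$ with $G_0$ among $\SL(n)$, $\Sp(n)$, or $\G_2$ ($p=2$). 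Note your expectation of an abelian radical already fails for the case $(\C_n,\a_1)$, where $P_u$ is a Heisenberg group. Finally, you omit the argument that $P$ is maximal (via the parabolic $Q$ generated by $P$ and $U_{-\a}$ and the inclusion $Q_u\subseteq H_u$ forced by cuspidality) and the reduction of the non-connected case, which the paper handles by observing that a finite $L$-invariant subset of $Y$ must be $\{0\}$.
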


\begin{proof}[Proof \rm(generalization of \cite{BriR1}*{2.1})]

  Assume first that $H$ is connected. Let $L\subseteq P$ be a Levi
  subgroup. Then, by assumption, the homomorphism $H\pfeil P/P_u\cong
  L$ is surjective. Thus, $H_u$ is mapped to a unipotent normal
  subgroup of $L$, i.e., to $1$ which means $H_u\subseteq P_u$. Let
  $Z\subseteq L$ be the center of $L$. It is a torus since $G$ is of
  adjoint type. Hence, it can be lifted to a subtorus $Z'$ of
  $H$. Because $Z'$ is $P$-conjugate to $Z$ we may, without loss of
  generality, assume that $Z=Z'\subseteq H$. But then
  $L=C_P(Z)\subseteq H$ (see, e.g., \cite{BorLAG}*{11.14 Cor. 2}),
  i.e., $H=L\semidir H_u$.

  Recall the maximal compactification $\Xq$ of $X=G/H$ and let let
  $\Yq\subseteq\Xq$ be the closure of $Y=P/H\subseteq G/H$. Then
  $G\times^P\Yq\pfeil\Xq$ is proper and birational, hence an
  isomorphism by the maximality property of $\Xq$. This shows that
  $\Yq$ consists of two $P$-orbits, namely $Y$ and $\partial
  Y=\Yq\setminus Y$. The latter is a complete $P$-orbit of codimension
  $1$ in $\Yq$. In particular, since $P_u$, being solvable, has a
  fixed point in $\partial Y$ it acts in fact trivially and $L$ acts
  transitively on $\partial Y$.

  The 1-dimensional root subgroups $U_\a\subset B$, $\a\in S$,
  generate $U=(B,B)$. Thus, since $H$ is not horospherical, there is a
  simple root $\a$ such that $U_\a\not\subseteq H$. This means
  $U_\a\subseteq P_u$ but $U_\a\not\subseteq H_u$. Then $C:=U_\a
  H/H\subseteq Y$, the $U_\a$-orbit in $P/H$, is an affine curve. It
  is closed, since $Y=P_u/H_u$ is affine and $U_\a$ is unipotent. Now
  let $\Cq$ be its closure in $\Yq$. Let ${}^-\!B\subseteq G$ be the
  Borel subgroup opposite to $B$. Then $U_\a$ is normalized by
  ${}^-\!B\cap L$ which implies that $\Cq$ is ${}^-\!B\cap
  L$-invariant. Because ${}^-\!B\cap L$ is a Borel subgroup of $L$, we
  infer that $L\Cq$ is an irreducible closed $L$-invariant subset of
  $\Yq$. Since $\Cq$ meets $\partial Y$ and $L$ acts transitively on
  $\partial Y$ this implies that $\partial Y\subsetneq L\Cq$. For
  dimension reasons we get $\Yq=L\Cq$ and therefore $Y=LC$. But the
  center $Z\subseteq L$ has only two orbits in $C$, namely $\{0\}$ and
  its complement. This shows that $L$ acts transitively on
  $Y\setminus\{0\}$.

  At this point we can show that it was no loss of generality to
  assume that $H$ is connected. Indeed let $\HS\subseteq P$ be a
  subgroup with $\HS^0=H$. Then $\HS/H$ is a finite $L$-invariant
  subset of $Y$. Hence $\HS/H=\{0\}$ and therefore $\HS=H$.

  Let $Q\subseteq G$ be the parabolic generated by $P$ and
  $U_{-\a}$. Then $U_\a\not\subseteq Q_u$ implies that $\a$ is not a
  weight in $Q_uH_u$. Hence $Q_uH_u/H_u$ is a proper $L$-invariant
  subvariety of $Y=P_u/H_u$ which means $Q_u\subseteq H_u\subseteq
  H\subseteq Q$. Since $H$ is cuspidal we conclude that $Q=G$, i.e.,
  that $P$ is a maximal parabolic subgroup corresponding to the set of
  simple roots $\Sigma'=\Sigma(G)\setminus\{\a\}$. Moreover, the
  cuspidality of $H$ implies that $G$ is simple and that $\Sigma(G)$
  is connected.

  Since $P_u$ acts transitively on $Y$ and $L$ acts transitively on
  $Y\setminus\{0\}$ the action of $P$ on $Y$ is doubly
  transitive. According to \cite{KnMT}*{Satz 2} there is an
  isomorphism $Y\cong{\mathbf A}^n$ such that the $P$-action is given
  by a surjective homomorphism $\pi:P\auf(\Gm\cdot
  G_0)\semidir\mathbf{G}_a^n$ where $\mathbf{G}_a^n$ acts by
  translations, $\Gm$ acts by scalars, and $G_0$ is either
  $\SL(n),n\ge2$ or $\Sp(n), n\ge2\ {\rm even}$ or $\G_2$, with $n=6$
  and $\|char|k=2$. This means in particular that $Y=P_u/H_u$ is a
  linear representation of $L$. It is irreducible with lowest weight
  $\a$.

  We conclude that $\a$ is an ``end'' of $\Sigma$ and $\Sigma'$ is of
  type $\A$ or $\C$ in general or of type $\B$ if
  $\|char|k=2$. Moreover, $-\a$ is a $p$-power multiple of the
  canonical representation (or its dual). Up to isomorphism, this
  leaves precisely the following cases:
  \begin{equation}\label{eq:22}
    (\Sigma,\a)=(\A_n,\a_1), (\B_n,\a_n), (\C_n,\a_1), (\G_2,\a_1)
  \end{equation}
  in arbitrary characteristic and
  \begin{equation}\label{eq:23}
    (\Sigma,\a,p)=(\C_n,\a_n,2), (\B_n,\a_1,2), (\G_2,\a_2,3)
  \end{equation}
  in characteristic $p>0$. Using a non-central isogeny, the latter
  cases reduce to the former.

  Clearly all cases appear (see table). Moreover, $H$ contains the
  maximal torus $T$. Therefore, $H_u$ is generated by all $U_\beta$
  where $\beta$ is a root of $P_u$ which is not a weight of $Y$. This
  shows uniqueness of $H$.
\end{proof}

Finally, we prove part \ref{it:MainClassifThm.b} of
\cref{thm:MainClassifThm}. For this we use the following

\begin{lemma}

  Let $X=G/H$ be defined over a field of characteristic $p>0$ and that
  $X$ can be lifted to a homogeneous varity $X_0$ over a field of
  characteristic $0$. Then $X$ is spherical if and only if $X_0$
  is. Morover, there are equalities $\Xi_p(X)=\Xi_p(X_0)$ and
  $S^\tP/(X)=S^\tP/(X_0)$.
\end{lemma}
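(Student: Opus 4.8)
The plan is to realize $X$ and $X_0$ as the two fibres of a single family over a discrete valuation ring and to transport all invariants by specialization, the only genuine subtlety being an inseparability phenomenon in characteristic $p$ which is precisely what forces the coefficients $\ZZ_p=\ZZ[\frac1p]$ into the statement. First I would fix a complete discrete valuation ring $R$ with residue field $k$ and fraction field $K$ of characteristic $0$, together with a smooth affine group scheme $\mathcal G$ over $R$ lifting $G$ and a smooth closed subgroup scheme $\mathcal H\subset\mathcal G$ lifting $H$, so that $\mathcal X:=\mathcal G/\mathcal H$ is smooth over $R$ with special fibre $X$ and geometric generic fibre $X_0=\mathcal X_{\overline K}$. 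The split torus $T$, the Borel $B$, and each minimal parabolic $P_\a$ possess canonical $R$-forms $\mathcal T,\mathcal B,\mathcal P_\a$ inside $\mathcal G$, and the character lattice $\Xi(\mathcal T)=\Xi(T)$ is constant along $\operatorname{Spec}R$; in particular $\dim X=\dim X_0$ and $\mathcal X$ is integral.

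The whole argument rests on one technical input, which I would isolate first: every $\mathcal B$-eigenfunction on the special fibre $X$ has a positive $p$-power that is the reduction of a $\mathcal B$-eigenfunction on the generic fibre $X_0$. Granting this, sphericity transfers as follows. The fibrewise $\mathcal B$-orbit dimension is lower semicontinuous on $\mathcal X$ (the stabilizer dimension being upper semicontinuous by Chevalley), so the locus where the $\mathcal B$-orbit has full fibre dimension is open and its image in $\operatorname{Spec}R$ is open; an open set containing the closed point is everything, which gives at once the implication ``$X$ spherical $\Rightarrow X_0$ spherical''. For the converse I would apply the technical input in the case of weight $0$: a nonconstant $B$-invariant on $X$ would have a $p$-power --- still nonconstant --- lifting to a nonconstant $B$-invariant on $X_0$, contradicting sphericity of $X_0$; hence $X_0$ spherical forces $X$ spherical.

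The weight lattices are compared by the two matching inclusions. Given $\chi\in\Xi(X_0)$, a $\mathcal B$-eigenfunction of weight $\chi$ on the generic fibre is a rational function on $\mathcal X$; normalizing its order along the prime divisor $X$ to zero by a power of the uniformizer, its reduction is a nonzero $B$-eigenfunction of the same weight (read off on the constant lattice $\Xi(\mathcal T)$), so $\chi\in\Xi(X)$. This yields $\Xi(X_0)\subseteq\Xi(X)$ integrally. In the other direction the technical input gives, for each $\chi\in\Xi(X)$, some $a\ge0$ with $p^a\chi\in\Xi(X_0)$; after tensoring with $\ZZ_p$ the two inclusions collapse into the equality $\Xi_p(X)=\Xi_p(X_0)$. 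This is exactly the point where the $p$-power ambiguity --- invisible once $p$ is inverted --- cannot be removed over $\ZZ$, as the ``new'' characteristic-$p$ spherical roots of the introductory example demonstrate.

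Finally, for the simple roots of type \tP/ I would use the defining condition $P_\a x_0=Bx_0$. Its easy half, $\a\in S^\tP/(X)\Rightarrow\a\in S^\tP/(X_0)$, again follows because the $\mathcal P_\a$-stabilization locus of the open orbit is open in $\operatorname{Spec}R$ and contains the closed point; the reverse half is controlled by the same specialization of eigenfunctions, since a $\mathcal B$-eigenfunction cutting out a color moved by $P_\a$ on $X_0$ reduces (after a $p$-power) to one on $X$, matching up the moved colors and hence the sets $S^\tP/$. The main obstacle is the technical input itself: in characteristic $p$ a $B$-eigenfunction on the special fibre need not lift, only a Frobenius twist of it does, and establishing this controlled lifting --- equivalently, that the inseparability of the $B$-action is concentrated in $p$-power multiples of weights --- is the heart of the proof and the sole reason the conclusion is phrased with $\ZZ_p$ rather than $\ZZ$ coefficients.
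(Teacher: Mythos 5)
Your family-over-a-DVR setup, the treatment of sphericity, and the two inclusions for the weight lattices all match the paper's proof. The paper supplies your ``technical input'' by citation rather than argument: the existence of a power $f^n$ of a $B$-eigenfunction on the special fibre extending to a semiinvariant on $\mathcal{X}$ is \cite{KnRoe}*{Lem.~3.1}, and the fact that $n$ may be taken to be a $p$-power is \cite{FvdK}*{Prop.~41}; the equivalence of sphericity of the two fibres is \cite{KnRoe}*{Thm.~3.4} (so your semicontinuity-plus-Rosenlicht sketch is an acceptable substitute there). Your converse inclusion $\Xi(X_0)\subseteq\Xi(X)$ by normalizing the order along the special fibre with a power of the uniformizer is exactly the paper's argument, up to the small point that one must first allow a ramified extension of $R$ so that the eigenfunction, a priori defined only over the algebraic closure of the fraction field, becomes rational on $\mathcal{X}$.

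The genuine gap is the $S^\tP/$ step, where both halves of your argument fail as stated. For the ``easy half'': being of type \tP/ is \emph{not} an orbit-dimension condition, so Chevalley semicontinuity sees nothing. In a spherical fibre, $P_\a x_0$ is an open dense $P_\a$-orbit of dimension $\|dim|X$ whether or not $\a\in S^\tP/$; the question is whether the two dense open sets $P_\a x_0$ and $Bx_0$ coincide, equivalently whether $P_\a$ moves some color, and there is no evident reason the locus in $\operatorname{Spec}R$ where this holds is open --- it is not even clear in which direction the condition degenerates in a family. For the ``reverse half'': a color of a spherical homogeneous space is in general not the divisor of a $B$-eigenfunction (that requires triviality of the relevant divisor class data), and ``matching up the moved colors'' across the two fibres is precisely what would have to be proved, not a consequence of the eigenfunction specialization you already have. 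The paper circumvents both problems with a different idea: when $\mathcal{X}$ is quasiaffine, one has the purely lattice-theoretic criterion $S^\tP/(X)=\{\a\in S\mid \a^r=0\}$, i.e.\ type \tP/ is read off from the vanishing of $\<\cdot,\a^\vee\>$ on $\Xi_\QQ(X)$, so the already-established equality $\Xi_p(X)=\Xi_p(X_0)$ immediately forces $S^\tP/(X)=S^\tP/(X_0)$; the general case is then reduced to the quasiaffine one by passing to an affine cone over $X$. You would need this criterion (or some replacement) to close your proof; as written, the $S^\tP/$ equality does not follow from what you have established.
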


\begin{proof}

  By assumption, there is a complete discrete valuation ring $R$ with
  residue field $k$ and uniformizer $\pi\in R$ and a smooth $R$-scheme
  $\mathcal{X}$ which has $X$ and $X_0$ as special fiber and generic
  geometric fiber, respectively. Then the equivalence of $X$ and $X_0$
  being spherical is \cite{KnRoe}*{Thm.~3.4}.

  According to \cite{KnRoe}*{Lem.~3.1}, for every $B$-semininvariant
  $f$ on $X$ there is an $n\ge1$ such that $f^n$ extends to a
  semiinvariant on $\mathcal{X}$. Moreover, \cite{FvdK}*{Prop.~41}, the
  exponent $n$ can be chosen to be a power of $p$. This combined shows
  $\Xi(X)\subseteq\Xi_p(X_0)$.

  Conversely, for let $\chi\in\Xi(X_0)$ let $f$ be a semiinvariant on
  $X_0$ with character $\chi_f=\chi$. After possibly replacing $R$ by
  a (ramified) extension we may assume that $f$ is a rational function
  on $\mathcal{X}$ which has poles at most along $X$. Thus, there is a
  unique exponent $m\in\ZZ$ such that $\tilde f:=\pi^mf$ is regular on
  $\mathcal{X}$. Then, the restriction of $\tilde f$ to $X$ is a
  $B$-semiinvariant with character $\chi$. This even shows
  $\Xi(X_0)\subseteq\Xi(X)$.

  For $\mathcal{X}$ quasiaffine, the equality $S^\tP/(X)=S^\tP/(X_0)$
  follows from $S^\tP/(X)=\{\a\in S\mid \a^r=0\}$. The general case is
  reduced to this by passing to an affine cone over $X$.
\end{proof}

Now let $H\subset G$ be a member of Table \S\ref{sec:TABLE} which can
be lifted to characteristic zero. Then $\Xi(X_0)$ and $S^\tP/(X_0)$
are well known (see, e.g., \cite{Wass}*{Table~1}). From this and the
Lemma it is easy to determine $\Xi_p(X)$ and $S^\tP/(X)$.

All other cases are isogenous to liftable ones. So the corresponding
data can be easily calculated, as well.  This finishes the proof of
\cref{thm:MainClassifThm}.

\section{The structure of the valuation cone}
\label{sec:ValuationCone}

Our main application for classifying spherical roots is to extend
Brion's Theorem \cite{BriES} on the structure of the valuation cone of
a spherical variety.

In the following, we choose an auxiliary $W$-invariant rational scalar
product on $\Xi_\QQ(T)$. For any $0\ne\sigma\in \Xi_\QQ(X)$ let
\begin{equation}
  s_\sigma(\chi)=\chi-\sigma^r(\chi)\sigma,\quad\hbox{with
  }\sigma^r(\chi):=2\frac{(\chi,\sigma)}{(\sigma,\sigma)}
\end{equation}
be the unique orthogonal reflection of $\Xi_\QQ(X)$ with
$s_\sigma(\sigma)=-\sigma$.

\begin{lemma}\label{lem:Wextend}

  Let $X$ be a spherical $G$-variety and $\sigma\in\Sigma(X)$. Then:
  \begin{enumerate}

  \item\label{it:Wextend.a}

    There is $n_\sigma\in W$ with $n_\sigma|_{\Xi_\QQ(X)}=s_\sigma$.

  \item\label{it:Wextend.b}

    Assume $\sigma\not\in 2S$. Then there is a root $\b$ of $G$ with
    $\sigma^r=\b^r$.

  \end{enumerate}

\end{lemma}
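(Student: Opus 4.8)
The plan is to reduce both assertions to the finite classification of \autoref{thm:MainClassifThm} and then to verify each entry by a computation inside the root subsystem with base $|\sigma|$. Since $\sigma\in\Sigma(X)\subseteq\Sigma(G)$, the recipe following \autoref{thm:MainClassifThm} shows that $(|\sigma|,\sigma)$ occurs in the table~\S\ref{sec:TABLE} and that $|\sigma|\cap S^\tP/(X)$ is exactly the set of black vertices of the diagram of $\sigma$. Now $\a\in S^\tP/(X)$ forces $\<\chi,\a^\vee\>=0$ for every $\chi\in\Xi(X)$ — a $B$-semiinvariant is then already $P_\a$-semiinvariant, and a character of $P_\a$ is trivial on $\a^\vee$ — so $\a^r=0$ on $\Xi_\QQ(X)$ for each black $\a\in|\sigma|$. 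Writing $\sigma^r=\sum_{\a\in|\sigma|}c_\a\,\a^r$ with $c_\a=n_\a(\a,\a)/(\sigma,\sigma)>0$ (pure bilinearity of the form), only the white vertices of $|\sigma|$ contribute to $\sigma^r|_{\Xi_\QQ(X)}$, and both $n_\sigma$ and $\b$ will be produced from the subsystem $|\sigma|$.

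First I would treat the case where $\sigma$ is a positive multiple of a root $\b$ of $|\sigma|$ (by inspection of the table this is every type except $\D$ and $\A_1\times\A_1$; typically $\sigma$ is the highest short root of its support). Then the orthogonal reflection $s_\sigma$ of $\Xi_\QQ(T)$ in $\sigma$ is the root reflection $s_\b\in W$, and it preserves $\Xi_\QQ(X)$ since $\sigma\in\Xi_\QQ(X)$; so $n_\sigma=s_\b$ settles \ref{it:Wextend.a}. For \ref{it:Wextend.b} the hypothesis $\sigma\notin 2S$ rules out only the type-$\A_1$ root $2\a$, leaving $\sigma=\b$ a root, whence $\sigma^r=\b^r$.

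The remaining families are where the relations among the $\a^r$ matter. For type $\D$ a short computation in the $\D$-subsystem shows that, once the black vertices are discarded, a single white simple root $\a_0$ (the end of the chain opposite the fork) survives and $\sigma^r=\a_0^r$ on $\Xi_\QQ(X)$; this gives $\b=\a_0$ for \ref{it:Wextend.b}. For \ref{it:Wextend.a} I would take $n_\sigma=w_0^{|\sigma|}$, the longest element of $W_{|\sigma|}$: it sends $\sigma\mapsto-\sigma$ because $\sigma$ is fixed by the diagram involution $-w_0^{|\sigma|}$, while on $\ker(\sigma^r)\cap\Xi_\QQ(X)$ every $\a^r$ ($\a\in|\sigma|$) vanishes, so all of $W_{|\sigma|}$ fixes that hyperplane pointwise; hence $w_0^{|\sigma|}$ restricts to $s_\sigma$. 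For type $\A_1\times\A_1$, with $\sigma=\a+\a'$ or its characteristic-$p$ twist $\a+q\a'$ ($q=p^a$), both vertices are white; granting the relation $\a^r=\a'^r$ (respectively $\a'^r=q\,\a^r$) on $\Xi_\QQ(X)$ one gets $\sigma^r=\a^r$, so that $\b=\a$ settles \ref{it:Wextend.b} and $n_\sigma=s_\a s_{\a'}$ settles \ref{it:Wextend.a}.

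The hard part will be precisely this last relation. In contrast to the vanishing $\a^r=0$ at black vertices, the identity $\a^r=\a'^r$ — twisted by $q$ in characteristic $p$ — is not forced by the $S^\tP/$-bookkeeping, since neither of the two orthogonal simple roots is black; geometrically it asserts that $\a$ and $\a'$ move one and the same color of $X$. I expect to obtain it by localizing $X$ at $\sigma$ to the rank-$1$ situation of \autoref{thm:MainClassifThm}, where $\Xi_\QQ=\QQ\sigma$ makes the ratio of $\a^r$ and $\a'^r$ transparent, and then to propagate the equality to all of $\Xi_\QQ(X)$ by means of the theory of colors of \cite{KnLoc}. Pinning down the $q$-normalization coming from the inseparable isogenies, together with the characteristic-$2$ rows of the table, is the delicate point; the root cases of \ref{it:Wextend.a}, \ref{it:Wextend.b} and the type-$\D$ case are then routine linear algebra in the root system.
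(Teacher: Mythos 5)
Your overall route coincides with the paper's: reduce to the rank-one classification, record that $\a^r=0$ at the black vertices, dispose of the cases where $\sigma$ is a positive multiple of a root by a single root reflection, and rest the disconnected-support case $\sigma=\a_1+q\a_2$ on the relation $\a_1^r=q^{-1}\a_2^r$. You correctly single out that relation as the essential input, but it is not a new difficulty: it is exactly \Ethirtyseven, which the paper simply cites, so your localization plan is unnecessary. Your replacement of the paper's product $s_\a s_\b$ of orthogonal root reflections by the longest element $w_0^{|\sigma|}$ in type $\D$ is correct and even slightly slicker (though it loses the refinement, noted in the paper's remark, that $n_\sigma$ can be taken of the form $s_\a s_\b$ with very orthogonal roots). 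The genuine gap is your inventory of the table: besides $\D_n$ and $\A_1\times\A_1$ there are three further entries whose spherical root is \emph{not} a rational multiple of a root, namely $\a_1+2\a_2+\a_3=\e_1+\e_2-\e_3-\e_4$ ($|\sigma|=\A_3$, from $\PGL(4)/\PSp(4)$), $\a_1+2\a_2+3\a_3=\e_1+\e_2+\e_3$ ($|\sigma|=\B_3$, from $\SO(7)/\G_2$), and, for $p=2$, $2\a_1+4\a_2+3\a_3=2\e_1+2\e_2+2\e_3$ ($|\sigma|=\C_3$, from $\PSp(6)/\G_2$). For these your proposal proves nothing. The paper treats them (together with $\D_n$) uniformly: write $\sigma=u\a+v\b$ with orthogonal roots $\a,\b$ such that $u^{-1}\a^\vee-v^{-1}\b^\vee$ lies in the span of the coroots of the black vertices; then $u^{-1}\a^r=v^{-1}\b^r$ on $\Xi_\QQ(X)$, whence $s_\sigma=s_\a s_\b|_{\Xi_\QQ(X)}$ and $\sigma^r=u^{-1}\a^r$ with one of $u,v$ equal to $1$. (Your $w_0^{|\sigma|}$ device would in fact rescue part \ref{it:Wextend.a} in all three cases, since again $\Xi_\QQ(X)\cap\QQ|\sigma|=\QQ\sigma$; but part \ref{it:Wextend.b} still requires exhibiting a root, e.g.\ $\b=\a_2+\a_3$ in the $\A_3$ and $\B_3$ cases.)

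A second gap sits in your treatment of part \ref{it:Wextend.b} for doubled roots: $\sigma\not\in2S$ excludes only twice a \emph{simple} root, whereas the table (the rows marked $[\times2]$) contains $\sigma=2\b$ with $\b$ a non-simple root: $2(\a_1+\cdots+\a_n)=2\e_1$ in $\B_n$ (the normalizer case $\<s_{\a_n}\>\cdot\SO(2n)$, $p\ne2$), $2(2\a_1+\a_2)$ in $\G_2$, and $2(3\a_1+2\a_2)$ in $\G_2$ for $p=3$. Your assertion that the hypothesis ``rules out only the type-$\A_1$ root $2\a$, leaving $\sigma=\b$ a root'' is therefore false, and one cannot take $\b$ itself, since $\sigma^r=\frac12\b^r\ne\b^r$. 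The repair is the black-vertex computation of the paper's first case: in $\B_n$, $\a_n\in S^\tP/$ gives $\a_n^r=0$, so $\sigma^r=\frac12(2\b'^r+\a_n^r)=\b'^r$ with $\b'=\a_1+\cdots+\a_{n-1}$ a root, and similarly $\sigma^r=\a_1^r$ resp.\ $\a_2^r$ in the two $\G_2$ cases. With these two repairs — the three composite supports $\A_3$, $\B_3$, $\C_3$ and the three doubled non-simple roots — your argument becomes complete and agrees in substance with the paper's.
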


\begin{proof}

  We check all items of the table~\S\ref{sec:TABLE}.

  \noindent 1.~case: $\sigma=u\a$ where $\a$ is a root of $G$ and
  $u\in\{1,2\}$. Then clearly $n_\sigma=s_\a$ works for
  \ref{it:Wextend.a}. Moreover, \ref{it:Wextend.b} is trivial for
  $u=1$. So let $u=2$. Then $p\ne2$ and there are three cases to
  consider:

  \begin{itemize}

  \item $|\sigma|=\B_n$ ($n\ge2$), $\a=\a_1+\ldots+\a_n$, and $\a_n\in
    S^\tP/(X)$. Then $\a_n^r=0$. On the other hand
    $\sigma^r=\frac12\a^r=\frac12(2\b^r+\a_n^r)=\b^r$ where
    $\b=\a_1+\ldots+\a_{n-1}$ is a root.

  \item $|\sigma|=\G_2$, $\a=2\a_1+\a_2$, and $\a_2\in S^\tP/$. Then
    $\sigma^r=\frac12\a^r=\frac12(2\a_1^r+3\a_2^r)=\a_1^r$.

  \item $|\sigma|=\G_2$, $\a=3\a_1+2\a_2$, $\a_1\in S^\tP/$, and
    $p=3$. Then $\sigma^r=\frac12\a^r=\frac12(\a_1^r+2\a_2^r)=\a_2^r$.

  \end{itemize}

  For the other cases we first prove \ref{it:Wextend.a}. For this, we
  use the following observation: let $\sigma=u\a+v\b$ with
  $u,v\in\QQ_{>0}$ and $\a$, $\b$ orthogonal roots of $G$. Assume
  $u^{-1}\a^r-v^{-1}\b^r=0$. Then $s_\sigma=s_\a
  s_\b|_{\Xi_\QQ(X)}$. Indeed, the assumptions imply
  $\sigma^r=u^{-1}\a^r=v^{-1}\b^r$. Hence
  \begin{equation}
    s_\a s_\b(\chi)=\chi-\a^r(\chi)\a-\b^r(\chi)\b=
    \chi-\sigma^r(\chi)\sigma=s_\sigma(\chi)
  \end{equation}
  for all $\chi\in\Xi_\QQ(X)$.

  \medskip

  \noindent 2.~case: $\sigma=\a_1+q\a_2$ with $\a_1,\a_2\in S$
  orthogonal. Then $\a_1^r-q^{-1}\a_2^r=0$ by \Ethirtyseven.

  \medskip

  \noindent 3.~case: In the remaining four cases, we claim that there
  is a decomposition $\sigma=u\a+v\b$ where $u,v\in\QQ_{>0}$ and $\a$,
  $\b$ are orthogonal roots of $G$ with
  $u^{-1}\a^\vee-v^{-1}\beta^\vee\in\<\gamma^\vee\mid\gamma\in
  S^\tP/\>_\QQ$. Indeed:
$$
\begin{array}{lll}

  |\sigma|=\A_3&\sigma=\a_1+2\a_2+\a_3=\a+\b&=\e_1+\e_2-\e_3-\e_4,\\
  &\a=\a_1+\a_2&=\e_1-\e_3,\\
  &\beta=\a_2+\a_3&=\e_2-\e_4,\\
  &\a^\vee-\beta^\vee=\a_1^\vee-\a_3^\vee&=(\e_1-\e_2)-(\e_3-\e_4)\\
  |\sigma|=\B_3&\sigma=\a_1+2\a_2+3\a_3=\a+\b&=\e_1+\e_2+\e_3,\\
  &\a=\a_1+\a_2+2\a_3&=\e_1+\e_3,\\
  &\beta=\a_2+\a_3&=\e_2,\\
  &\a^\vee-\beta^\vee=\a_1^\vee-\a_2^\vee&=(\e_1-\e_2)-(\e_2-\e_3)\\
  |\sigma|=\D_n&\sigma=2\a_1+\ldots+2\a_{n-2}+\a_{n-1}+\a_n=\a+\b&=2\e_1,\\
  &\a=\a_1+\ldots+\a_{n-2}+\a_{n-1}&=\e_1-\e_n,\\
  &\beta=\a_1+\ldots+\a_{n-2}+\a_n&=\e_1+\e_n,\\
  &\a^\vee-\beta^\vee=\a_{n-1}^\vee-\a_n^\vee&=(\e_{n-1}-\e_n)-(\e_{n-1}+\e_n)\\
  |\sigma|=\C_3&\sigma=2\a_1+4\a_2+3\a_3=2\a+\b&=2\e_1+2\e_2+2\e_3\\
  (p=2)&\a=\a_1+\a_2+\a_3&=\e_1+\e_3,\\
  &\b=2\a_2+\a_3&=2\e_2,\\
  &\frac12\a^\vee-\b^\vee=\frac12\a_1^\vee-\frac12\a_2^\vee&=\frac12(\e_1-\e_2)-\frac12(\e_2-\e_3).\\

\end{array}
$$
This shows \ref{it:Wextend.a} in all cases. For \ref{it:Wextend.b} use
$\sigma^r=u^{-1}\a^r=v^{-1}\b^r$ and the fact that in each case one of
$u$ or $v$ is $1$.
\end{proof}

\begin{remarks}

  \begin{enumerate}

  \item The proof shows that $n_\sigma$ can be chosen to be either a
    simple reflection $s_\a$ or $n_\sigma=s_\a s_\b$ where $\a$ and
    $\b$ are \emph{very orthogonal roots} meaning that there is $w\in
    W$ such that $w\a$ and $w\b$ are orthogonal simple roots.

  \item The element $n_\sigma$ can be chosen independently of the
    choice of the scalar product on $\Xi(T)$. Hence, also $s_\sigma$
    is independent of the scalar product.

  \end{enumerate}
\end{remarks}

\begin{definition}

  The subgroup of $W_X\subseteq\GL(\Xi_\QQ(X))$ generated by all
  $s_\sigma$, $\sigma\in\Sigma(X)$, is called the \emph{little Weyl
    group of $X$}.

\end{definition}

\begin{theorem}\label{thm:WXfinite}

  Let $X$ be a spherical variety. Then:

  \begin{enumerate}

  \item\label{it:WXfinite.a} The little Weyl group $W_X$ is finite.

  \item\label{it:WXfinite.b} The groups $\Xi_p(X)$ and $\ZZ S\cap
    \Xi_p(X)$ are $W_X$-invariant.

  \item\label{it:WXfinite.c} The set $R_X:=W_X\Sigma(X)$ is a (finite)
    root system with Weyl group $W_X$.

  \end{enumerate}
\end{theorem}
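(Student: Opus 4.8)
The plan is to realize the abstract little Weyl group $W_X$ as the image of a \emph{finite} subgroup of the ambient Weyl group $W$, and then to read off lattice-invariance and the root-system axioms from the explicit reflection formula. For \ref{it:WXfinite.a} I would first note that $\Sigma(X)$ is finite, since $\cV(X)$ is a finitely generated cone and hence has only finitely many codimension-$1$-faces. Using \autoref{lem:Wextend}\ref{it:Wextend.a} I pick, for each $\sigma\in\Sigma(X)$, an element $n_\sigma\in W$ restricting to $s_\sigma$ on $\Xi_\QQ(X)$. Because $s_\sigma$ maps the subspace $\Xi_\QQ(X)\subseteq\Xi_\QQ(T)$ onto itself, each $n_\sigma$ stabilizes this subspace, and therefore so does the finite group $\tilde W:=\langle n_\sigma\mid\sigma\in\Sigma(X)\rangle\subseteq W$. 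Restriction then yields a homomorphism $\tilde W\to\GL(\Xi_\QQ(X))$ sending $n_\sigma\mapsto s_\sigma$, whose image is exactly $W_X$. Since $\tilde W$ is finite, so is $W_X$.

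For \ref{it:WXfinite.b} it suffices to treat the generators. Writing $s_\sigma(\chi)=\chi-\sigma^r(\chi)\sigma$ with $\sigma\in\Xi_p(X)\cap\ZZ S$, the point is that $\sigma^r$ is integer-valued on the relevant lattice. If $\sigma\notin 2S$, then \autoref{lem:Wextend}\ref{it:Wextend.b} gives a root $\beta$ with $\sigma^r=\beta^r$, so $\sigma^r(\chi)=\langle\chi,\beta^\vee\rangle\in\ZZ$ for all $\chi\in\Xi(T)$. If $\sigma=2\alpha$ with $\alpha\in S$, then $s_\sigma=s_\alpha$ and $\sigma^r(\chi)=\tfrac12\langle\chi,\alpha^\vee\rangle$, which I would show to be integral by invoking the parity statement \Parity. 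In both cases $s_\sigma(\chi)=\chi-\sigma^r(\chi)\sigma$ stays inside $\Xi_p(X)$ (reducing the general $\chi\in\Xi_p(X)$ to $\chi\in\Xi(X)$ by $\ZZ_p$-linearity) and inside $\ZZ S\cap\Xi_p(X)$; invariance, not merely inclusion, follows since each $s_\sigma$ is an involution.

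Finally, for \ref{it:WXfinite.c} I would argue as follows. By \ref{it:WXfinite.a} the set $R_X=W_X\Sigma(X)$ is finite and, being a union of $W_X$-orbits of nonzero vectors, it is $W_X$-stable and avoids $0$; moreover $-\sigma=s_\sigma(\sigma)\in R_X$. For $\rho=w\sigma\in R_X$ the orthogonal reflection $s_\rho=w\,s_\sigma\,w^{-1}$ lies in $W_X$, and since the $s_\sigma$ generate $W_X$ the reflections $s_\rho$ ($\rho\in R_X$) generate precisely $W_X$. The remaining axiom is integrality. Setting $M:=\ZZ S\cap\Xi_p(X)$, a $W_X$-stable lattice by \ref{it:WXfinite.b}, every $\sigma\in\Sigma(X)$ is primitive in $M$ by definition of a spherical root, and primitivity is preserved by the lattice automorphisms in $W_X$, so every $\rho\in R_X$ is primitive in $M$. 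Then for $\rho,\rho'\in R_X\subseteq M$ the element $s_\rho(\rho')-\rho'=-\rho^r(\rho')\rho$ lies in $M$, and primitivity of $\rho$ forces $\rho^r(\rho')\in\ZZ$. Thus $R_X$ is a crystallographic root system in $\langle\Sigma(X)\rangle_\QQ$ with Weyl group $W_X$.

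The heavy lifting in \ref{it:WXfinite.a} is done by \autoref{lem:Wextend}; once $W_X$ is finite, parts \ref{it:WXfinite.b} and \ref{it:WXfinite.c} are largely formal. I expect the only genuinely delicate point to be the case $\sigma\in 2S$ in \ref{it:WXfinite.b}: there the integrality of $\sigma^r$ is not automatic and rests on the parity result \Parity, which is exactly what guarantees that the reflections preserve $\Xi_p(X)$ rather than merely its saturation in $\Xi(T)$.
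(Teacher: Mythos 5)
Your proof is correct and takes essentially the same route as the paper: finiteness via \autoref{lem:Wextend}\ref{it:Wextend.a} (realizing $W_X$ as a subquotient of $W$), invariance of the lattices via the parity result \cite{KnLoc}*{\Parity} for $\sigma\in 2S$ and \autoref{lem:Wextend}\ref{it:Wextend.b} otherwise, and the root-system property from the primitivity of the elements of $R_X$ in the $W_X$-stable lattice $\ZZ S\cap\Xi_p(X)$. You merely spell out the details that the paper's terse three-line proof leaves to the reader.
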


\begin{proof} \ref{it:WXfinite.a} The little Weyl group is finite
  since, by \cref{lem:Wextend}\ref{it:Wextend.a}, it is a subquotient
  of $W$.

  \ref{it:WXfinite.b} Let $\sigma\in\Sigma(X)$. Then
  $\sigma\in\Xi_p(X)$. Moreover, it follows from
  \cite{KnLoc}*{\Parity} (for $\sigma\in2S$) and
  \cref{lem:Wextend}\ref{it:Wextend.b} (for $\sigma\not\in2S$) that
  $\sigma^r$ takes values in $\ZZ_p$ on $\Xi_p(X)$. This combined
  implies the $s_\sigma$-invariance of $\Xi_p(X)$. Then the invariance
  of $\ZZ S\cap \Xi_p(X)$ follows from the $W$-invariance of $\ZZ S$
  and \cref{lem:Wextend}\ref{it:Wextend.a}.

  \ref{it:WXfinite.c} follows from the fact that $R_X$ consists of
  primitive vectors in the $W_X$-invariant lattice $\ZZ S\cap
  \Xi_p(X)$.
\end{proof}

\begin{remark}

  The group $\Xi(X)$ itself is, in general, not $W_X$-invariant. Take
  for example $G=\GL(2)$ and $X=\mathbf{A}^2\times{}^{(q)}\!\P^1$.
  Here $^{(q)}\P^1$ denotes the projective line with the Frobenius
  twisted $G$-action. Then $\Xi(X)=\ZZ q\e_1\oplus\ZZ\e_2$ which is
  not $s_\a$-stable unless $q=1$.

\end{remark}

\begin{corollary}

  The valuation cone $\cV(X)$ of a spherical variety $X$ is the union
  of Weyl chambers for the little Weyl group $W_X$.

\end{corollary}

\begin{remark}

  Schalke, \cite{Schalke}, has shown that if $p=2$ and
  $X=\PGL(3)/SO(3)$ then $\Sigma(X)=\{\a_1,\a_1+\a_2\}$. Thus $\cV(X)$
  can be identified with the set of rational triples $(x_1,x_2,x_3)$
  with $x_1+x_2+x_3=0$ and $x_1\le x_2,x_3$ which is the union of the
  two chambers $\{x_1\le x_2\le x_3\}$ and $\{x_1\le x_3\le x_2\}$.

\end{remark}

The next goal is now to prove that $\cV(X)$ is actually just one Weyl
chamber, provided that $p\ne2$.  A constraint on the angles between
spherical roots is given by the following theorem. Its proof is
deferred to \cref{sec:Angle}.

\begin{theorem}\label{thm:obtuse}

  Let $X$ be a spherical variety and let $\sigma,\tau\in\Sigma(X)$ be
  distinct spherical roots with $(\sigma,\tau)>0$. Then $p=2$ and, up
  to a switch of $\sigma$ and $\tau$, the triple $|\sigma|\cup|\tau|$,
  $\sigma$, $\tau$ is contained in the following table.
  \begin{equation}\label{eq:32}
    \begin{array}{lll}

      |\sigma|\cup|\tau|&\sigma&\tau\\

      \noalign{\smallskip\hrule\smallskip}

      \A_2&\a_1&\a_1+\a_2\\

      \B_n, n\ge2&\a_2+\ldots+\a_n&\a_1+2\a_2+\ldots+2\a_n\\

      \C_n, n\ge2&2\a_2+\ldots+2\a_{n-1}+\a_n&\a_1+2\a_2+\ldots+2\a_{n-1}+\a_n\\

      \G_2&\a_2&\a_1+\a_2

    \end{array}
  \end{equation}

\end{theorem}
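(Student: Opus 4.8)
The plan is to reduce to a two-dimensional computation and then run through the classification of \autoref{thm:MainClassifThm}. Since the scalar product is $W$-invariant and $\sigma,\tau\in\ZZ S$, the value $(\sigma,\tau)$ is computed entirely inside the root subsystem spanned by $|\sigma|\cup|\tau|$; if that diagram is disconnected then $(\sigma,\tau)=0$ and there is nothing to prove, so I may assume it is connected. The structural input I would exploit is \autoref{thm:WXfinite}: the reflections $s_\sigma,s_\tau$ lie in $W_X$, which by \ref{it:WXfinite.c} is the Weyl group of the root system $R_X$ and is in particular finite and crystallographic. Hence the dihedral group $\<s_\sigma,s_\tau\>$ is finite and crystallographic, so $\sigma^r(\tau)\,\tau^r(\sigma)=4\cos^2\theta\in\{0,1,2,3\}$, where $\theta$ is the angle between $\sigma$ and $\tau$. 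Because $\sigma^r(\tau)$ has the sign of $(\sigma,\tau)$, an acute pair means $\sigma^r(\tau)>0$ and $\tau^r(\sigma)>0$ with product in $\{1,2,3\}$; and by \autoref{thm:WXfinite}\ref{it:WXfinite.b} both numbers lie in $\ZZ_p$, since $\sigma,\tau\in\ZZ S\cap\Xi_p(X)$.

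With this framing in place I would carry out the enumeration, which is the ``not difficult'' half. Each of $\sigma,\tau$ has one of the shapes recorded in the table~\S\ref{sec:TABLE}, so for a connected diagram carrying $|\sigma|\cup|\tau|$ the unordered pairs form a short finite list, and for each I compute $\sigma^r(\tau)$ and $\tau^r(\sigma)$ directly from the Cartan matrix. In the overwhelming majority the product is non-positive and the pair is obtuse. The decisive extra constraint, which prunes the list sharply, is that $\sigma$ and $\tau$ must be \emph{simultaneously} compatible: they share one set $S^\tP/(X)$, so by the compatibility recipe following \autoref{thm:MainClassifThm} the black vertices prescribed by $\sigma$ and by $\tau$ must agree, and every simple root of $S^\tP/(X)$ outside $|\sigma|\cup|\tau|$ must be orthogonal to both. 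Many numerically acute combinations are discarded already here because their black-dot patterns clash.

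The heart of the matter, and the step I expect to be the main obstacle, is eliminating the few acute candidates that survive the enumeration when $p\ne2$. Here I would invoke the structure theory of \cite{KnLoc}: the parity statement \cite{KnLoc}*{\Parity} together with the color-counting (type-$\A$) inequalities that control how a spherical root interacts with the colors and with other roots. The mechanism is that a doubled spherical root $2\a$ is available precisely when $p\ne2$, and in that regime \autoref{lem:Wextend}\ref{it:Wextend.b} replaces $\sigma^r$ by a genuine root's $\a^r$; combining this with $\sigma^r(\tau)\in\ZZ_p$ and the correct parity of the relevant coefficient forces $\sigma^r(\tau)\,\tau^r(\sigma)=0$, contradicting acuteness. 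The delicate part is that this has to be verified uniformly across the families $\A,\B,\C,\D,\G$ and is genuinely case-by-case; organizing the check so that no surviving configuration is overlooked, and isolating exactly the failure of parity responsible for the characteristic-$2$ exceptions, is where the real care lies.

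Finally, for $p=2$ I would confirm that exactly the four rows of \eqref{eq:32} remain. For each listed triple $|\sigma|\cup|\tau|,\sigma,\tau$ I check via \autoref{thm:MainClassifThm} and the compatibility recipe that $\sigma$ and $\tau$ are simultaneous spherical roots of a single $X$ sharing one $S^\tP/(X)$, and I compute $(\sigma,\tau)>0$ from the Cartan matrix; in the $\A_2$ row, for instance, $\sigma^r(\tau)=\tau^r(\sigma)=1$, consistent with the Schalke example $X=\PGL(3)/\SO(3)$ where $\Sigma(X)=\{\a_1,\a_1+\a_2\}$. That these, and only these, occur then finishes the proof, the dihedral constraint of the first paragraph pinning down the angle in each case.
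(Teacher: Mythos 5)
Your reduction in the first paragraph contains a genuine error, and it is precisely the point where the paper has to work hardest outside of combinatorics. The claim that a disconnected diagram $|\sigma|\cup|\tau|$ forces $(\sigma,\tau)=0$ is false, because in positive characteristic a spherical root can itself have disconnected support: $\sigma=\a_1+q\a_2$ with $\a_1,\a_2$ orthogonal, coming from the entry $(\PGL(2)\times\PGL(2))/(\operatorname{id}\times F_q)\PGL(2)$ of the table. For instance, $\sigma=\a_1+q\a_2$ and $\tau=\a_1+q'\a_2$ with $q\ne q'$ satisfy $(\sigma,\tau)>0$, carry no black vertices, and are each compatible with $S^\tP/=\leer$, so neither your connectedness reduction nor your black-dot pruning touches this pair; and whether your crystallographic constraint eliminates it is a small Diophantine question (the pairings are $2(1+qq')/(1+q^2)$ and its mirror) that you never address. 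The paper devotes the first paragraph of its proof exactly to this case, and the tool is not combinatorial: by \Ethirtyseven, the membership $\sigma\in\Sigma(X)$ forces every $\tau\in\Sigma(X)$ to be orthogonal to $\lambda=\a_1^\vee-q^{-1}\a_2^\vee$, which first pins down $|\tau|_s=\{\a_1,\a_2\}$ and then forces $q'=q$, a contradiction.

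Second, your proposed mechanism for killing the acute survivors when $p\ne2$ cannot work as described. Take $(\A_2,\a_1,\a_1+\a_2)$: there $\sigma^r(\tau)=\tau^r(\sigma)=1$, a perfectly crystallographic configuration, integral over $\ZZ$ and hence over $\ZZ_p$ for every $p$; moreover the pair actually occurs for $p=2$ (Schalke's $\PGL(3)/\SO(3)$), so no parity or $\ZZ_p$-integrality argument can ``force $\sigma^r(\tau)\,\tau^r(\sigma)=0$'' — the obstruction for $p\ne2$ is geometric, not lattice-theoretic. What the paper does is first prove that the relevant pairs are neighbors in $\Sigma(X)$ (\autoref{lem:neighbor2}), so that the localization results of \cite{KnLoc} apply, and then use that $\sigma$ is a simple root of type \tA/ moving two colors with $\delta_{D_1}^{(\sigma)}+\delta_{D_2}^{(\sigma)}=\sigma^r$; from $\sigma^r(\tau)>0$ one gets $\delta_{D_i}(\tau)>0$ for some $i$, contradicting \cite{KnLoc}*{\typeAcolorInequal}. (The parity statement \cite{KnLoc}*{\Parity} enters only to know that $\sigma$ is reduced when $p\ne2$.) Finally, your last step for $p=2$ only verifies existence of the four rows of \eqref{eq:32}, but the connected enumeration also produces $(\B_2,\a_1+\a_2,\a_1+2\a_2)$, which is numerically acute and combinatorially admissible yet absent from the theorem; excluding it needs the paper's separate group-theoretic argument (a hypothetical $X=\SO(5)/H$ would be affine with $H$ reductive, indeed semisimple of dimension $4$, and no such group exists). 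Likewise the extra $p=3$ candidate $(\G_2,\a_1,3\a_1+\a_2)$ — again fully crystallographic, with pairing product $3$ — must be reduced via the exceptional self-isogeny of $\G_2$ to a previously excluded case. Neither exclusion appears in your plan, so as it stands the proof both rests on a false reduction and would output a strictly larger table than \eqref{eq:32}.
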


\begin{remark}

  In characteristic 2 one can show that all exceptional cases in
  \cref{thm:obtuse} do occur. Namely:

  \begin{itemize}

  \item $\Sigma(X)=\{\a_1,\a_1+\a_2\}\subset\A_2$ where
    $X=\SL(3)/\SO(3)$ (see \cite{Schalke}).

  \item
    $\Sigma(X)=\{\a_2+\ldots+\a_n,\a_1+2\a_2+\ldots+2\a_n\}\subset\B_n$,
    $n\ge2$ where $X$ is the open $\SO(2n+1)$-orbit in the
    Grassmannian of codimension-2-subspaces in $k^{2n+1}$ (the
    defining representation).

  \item $\Sigma(X)=\{2\a_2+\ldots+2\a_{n-1}+\a_n,
    \a_1+2\a_2+\ldots+2\a_{n-1}+\a_n\}\subset\C_n$, $n\ge2$, is
    isogenous to the preceding case, thus occurs as well. More
    concretely: the variety $\Sp(2n)/\O(2n)$ is isomorphic to an
    affine space ${\mathbf A}^{2n}$ on which $\Sp(2n)$ acts by affine
    linear transformations. Now $X$ is the open orbit in the set of
    affine lines of ${\mathbf A}^{2n}$.

  \item $\Sigma(X)=\{\a_2,\a_1+\a_2\}\subset\G_2$ where $X=G/H$
    and\newline $H=\SO(3)\cdot
    U_{2\a_1+\a_2}U_{3\a_1+\a_2}U_{3\a_1+2\a_2}\subset
    P_{\a_2}\subset\G_2$.

  \end{itemize}
Details will appear elsewhere.

\end{remark}

From \cref{thm:obtuse} we derive the main result of this paper:

\begin{theorem}

  Let $X$ be a spherical variety which is defined over a field of
  characteristic $p\ne2$. Then $\Sigma(X)$ is a system of simple roots
  for the root system $R_X=W_X\Sigma(X)$.

\end{theorem}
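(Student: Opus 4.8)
The plan is to read off the statement from \autoref{thm:obtuse} and \autoref{thm:WXfinite}: once we know that distinct spherical roots meet at a non-acute angle, the result becomes a statement in the elementary theory of finite reflection groups, namely that a linearly independent, pairwise non-acute set of roots whose reflections generate the Weyl group is a base. Since $p\ne2$, \autoref{thm:obtuse} yields $(\sigma,\tau)\le0$ for all distinct $\sigma,\tau\in\Sigma(X)$, and this non-acuteness is the only thing I would extract from that (deferred) theorem; essentially all of the difficulty is already hidden there. It then remains to verify linear independence and to invoke the reflection-group dictionary.

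First I would manufacture a single element of $N_\QQ(X)$ on which all spherical roots are strictly negative. By definition each $\sigma\in\Sigma(X)$ is non-positive on $\cV(X)$ and $\cV(X)\cap\{\sigma=0\}$ is a proper (codimension-one) face, so $\sigma$ does not vanish identically on $\cV(X)$ and there is $v_\sigma\in\cV(X)$ with $\<\sigma,v_\sigma\><0$. Because $\cV(X)$ is a convex cone it is closed under addition, so $v_0:=\sum_{\sigma\in\Sigma(X)}v_\sigma$ again lies in $\cV(X)$, and for every $\tau\in\Sigma(X)$ we get $\<\tau,v_0\>=\sum_\sigma\<\tau,v_\sigma\><0$, since every summand is $\le0$ and the one with $\sigma=\tau$ is strictly negative.

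Next I would prove linear independence by the standard obtuse-angle argument. Suppose $\sum_\sigma c_\sigma\sigma=0$, and split the indices into $P=\{c_\sigma>0\}$ and $N=\{c_\sigma<0\}$, so that $w:=\sum_{\sigma\in P}c_\sigma\sigma=\sum_{\tau\in N}(-c_\tau)\tau$. Evaluating $(w,w)$ by taking one factor from each of these two expressions gives a sum of terms $c_\sigma(-c_\tau)(\sigma,\tau)$ over disjoint pairs $\sigma\in P$, $\tau\in N$; each is $\le0$ by non-acuteness, whence $(w,w)\le0$ and $w=0$. Pairing the identity $w=0$ with $v_0$ yields $\sum_{\sigma\in P}c_\sigma\<\sigma,v_0\>=0$, a sum of strictly negative terms, forcing $P=\varnothing$, and symmetrically $N=\varnothing$. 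Thus $\Sigma(X)$ is linearly independent.

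Finally I would conclude via the theory of finite reflection groups. By \autoref{thm:WXfinite} the group $W_X=\<s_\sigma\mid\sigma\in\Sigma(X)\>$ is finite and $R_X=W_X\Sigma(X)$ is a root system with Weyl group $W_X$; since each $s_\sigma$ preserves the span of $\Sigma(X)$, that span equals the span of $R_X$, so by the previous step $\Sigma(X)$ is a basis of it. A linearly independent, pairwise non-acute set of roots whose reflections generate $W_X$ cuts out a simplicial fundamental chamber $\{\lambda:(\lambda,\sigma)\le0\text{ for all }\sigma\in\Sigma(X)\}$ whose walls are exactly the hyperplanes $\sigma^\perp$; hence $\Sigma(X)$ is the associated system of simple roots of $R_X$ (cf.\ the standard theory of finite reflection groups, e.g.\ Bourbaki). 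I expect the only delicate point here to be the clean invocation of this last fact—matching the primitivity and normalization of the $\sigma$ with the notion of a base—while the genuinely hard content has been absorbed into \autoref{thm:obtuse}.
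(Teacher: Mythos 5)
Your proposal is correct and follows essentially the same route as the paper: deduce $(\sigma,\tau)\le0$ for distinct spherical roots from \autoref{thm:obtuse} (using $p\ne2$), conclude linear independence, and then invoke the standard dictionary for finite reflection groups to identify $\Sigma(X)$ as a base of $R_X$. The only difference is cosmetic: where the paper cites \cite{Bou}*{Ch.~5, \S3, no.~5, Lemme~3} for linear independence and refers to \cite{BriES}*{Th\'eor\`eme~3.5} for the final step, you prove the Bourbaki lemma inline—correctly supplying its half-space hypothesis by summing valuations $v_\sigma\in\cV(X)$ with $\<\sigma,v_\sigma\><0$, which exist by the definition of spherical roots—and delegate the last step to the same standard reflection-group theory at the same level of detail as the paper.
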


\begin{proof}

  According to \cref{thm:obtuse}, the angle between any two spherical
  roots is $\ge\frac{\pi}{2}$. Hence \cite{Bou}*{Ch.~5, \S3, no.~5,
    Lemme~3} (or \cite{BriES}*{Th\'eor\`eme~3.1}) implies that
  $\Sigma(X)$ is linearly independent. Now we argue as in the proof of
  \cite{BriES}*{Th\'eor\`eme~3.5}.
\end{proof}

The following corollaries are immediate consequences:

\begin{corollary}\label{cor:FundDomain}

  Let $p\ne2$. Then the valuation cone $\cV(X)$ of a spherical variety
  $X$ is a Weyl chamber for the little Weyl group $W_X$.

\end{corollary}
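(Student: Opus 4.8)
The plan is to deduce this corollary formally from the two results just established: the earlier corollary that $\cV(X)$ is a union of Weyl chambers for $W_X$, and the preceding theorem that, for $p\ne2$, the set $\Sigma(X)$ is a system of simple roots for the root system $R_X=W_X\Sigma(X)$. The only extra input is the defining property of spherical roots, namely that every $\sigma\in\Sigma(X)$ is non-positive on $\cV(X)$.

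First I would single out the antidominant chamber. Viewing each $\sigma\in\Sigma(X)$ as a linear functional on $N_\QQ(X)$ via the natural pairing, set $C:=\{v\in N_\QQ(X)\mid \sigma(v)\le0 \text{ for all }\sigma\in\Sigma(X)\}$. Because $\Sigma(X)$ is a simple system for $R_X$, the region $C$ is precisely one closed Weyl chamber for $W_X$, the antidominant one. On the other hand, the non-positivity of spherical roots on $\cV(X)$ says exactly that $\cV(X)\subseteq C$.

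The conclusion then comes from comparing this inclusion with the union-of-chambers structure. By the earlier corollary $\cV(X)$ is a union of closed $W_X$-chambers, so it contains at least one closed chamber $C'$. I would argue that the only closed chamber contained in $C$ is $C$ itself: the open interior of any chamber $C'\subseteq C$ avoids all reflecting hyperplanes of $W_X$ yet lies in $C$, hence lies in the open interior of $C$, and two open chambers that meet are equal, forcing $C'=C$. From $C=C'\subseteq\cV(X)\subseteq C$ we obtain $\cV(X)=C$, a single Weyl chamber.

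The main obstacle here is not analytic but organizational: essentially all of the real content has already been pushed into the preceding theorem (that $p\ne2$ forces $\Sigma(X)$ to be a genuine simple system, which in turn rests on the obtuseness result \autoref{thm:obtuse}). What remains is bookkeeping — keeping the sign conventions straight so that ``non-positive on $\cV(X)$'' matches the antidominant rather than the dominant chamber, and noting that $\cV(X)$ is full-dimensional so that ``union of chambers'' is meaningful, which is already implicit in the earlier corollary since a chamber is full-dimensional. Once those points are settled, the identity $\cV(X)=C$ is immediate.
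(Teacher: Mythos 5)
Your proposal is correct and follows exactly the route the paper intends: the paper states this corollary as an immediate consequence of the preceding theorem (that for $p\ne2$ the set $\Sigma(X)$ is a simple system for $R_X$) together with the earlier corollary that $\cV(X)$ is a union of Weyl chambers, and your argument simply makes the ``immediate'' step explicit by identifying $\cV(X)$ with the antidominant chamber $\{\,v\mid\sigma(v)\le0\ \forall\sigma\in\Sigma(X)\,\}$. The bookkeeping you supply (non-positivity giving $\cV(X)\subseteq C$, and a chamber contained in $C$ being equal to $C$) is exactly the standard reflection-group argument the paper leaves to the reader.
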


\begin{corollary}

  Let $p\ne2$ and $X$ a spherical variety. Then the set $\Sigma(X)$ of
  spherical roots is linearly independent. This means, in particular,
  that the valuation cone $\cV(X)$ is a cosimplicial cone.

\end{corollary}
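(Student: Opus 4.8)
The plan is to obtain both assertions as immediate consequences of the theorem just proved, with no new computation required.

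For the linear independence of $\Sigma(X)$ I would simply note that it is already contained in the preceding theorem: a system of simple roots of a root system is, by definition, linearly independent. If one prefers an argument that does not pass through the full root-system statement, then since $p\ne2$ the table of \autoref{thm:obtuse} shows that no two distinct spherical roots enclose an acute angle, so that $(\sigma,\tau)\le0$ for all distinct $\sigma,\tau\in\Sigma(X)$; Bourbaki's lemma \cite{Bou}*{Ch.~5, \S3, no.~5, Lemme~3} then yields linear independence at once. Either way, the first claim costs a single line.

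For the cosimplicial statement I would translate this into a description of the valuation cone. The inputs I would invoke are the definition of a spherical root together with \autoref{cor:FundDomain}. By \autoref{cor:FundDomain} the cone $\cV(X)\subseteq N_\QQ(X)$ is a single Weyl chamber for $W_X$, and by definition each $\sigma\in\Sigma(X)$ is non-positive on $\cV(X)$ with $\cV(X)\cap\{\sigma=0\}$ a codimension-1-face, these faces exhausting the facets of $\cV(X)$. I would conclude that
\begin{equation*}
  \cV(X)=\bigcap_{\sigma\in\Sigma(X)}\{\sigma\le0\}.
\end{equation*}
Since the family of normals $\Sigma(X)$ is linearly independent by the first part, this exhibits $\cV(X)$ as the intersection of half-spaces with linearly independent defining functionals, which is exactly the meaning of a cosimplicial cone.

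The only obstacle I anticipate is a minor point of bookkeeping: to write $\cV(X)$ as the intersection displayed above I must know that its facets are indexed by $\Sigma(X)$ bijectively and without redundancy, i.e. that distinct facets give non-proportional spherical roots and that there are no further facets. This is immediate from the definition of $\Sigma(X)$, and is reinforced by the fact that the walls of a Weyl chamber stand in bijection with the simple roots of its root system, which here are precisely $\Sigma(X)$. Hence no genuine difficulty arises, and the corollary follows.
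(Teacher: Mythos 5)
Your proposal is correct and matches the paper, which states this corollary without proof as an immediate consequence of the preceding theorem: linear independence is built into the notion of a system of simple roots (or, equivalently, follows from \autoref{thm:obtuse} via the same Bourbaki lemma the paper uses in proving that theorem), and the cosimplicial claim is exactly your half-space description of $\cV(X)$, whose full-dimensionality is guaranteed by \autoref{cor:FundDomain}. Your ``bookkeeping'' worry is harmless, since by definition the facets of $\cV(X)$ correspond bijectively to the primitive normals constituting $\Sigma(X)$.
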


The theory of spherical embeddings, \cite{Hyd}, provides us with the
following relaxed verion of a wonderful embedding:

\begin{corollary}

  Let $p\ne2$ and let $X=G/H$ be a homogeneous spherical variety such
  that $H^{\|red|}$ is of finite index in its normalizer. Then there
  is an equivariant normal completion $X\into\Xq$ with the following
  properties:

  \begin{enumerate}

  \item\label{it:WondVar.a} The boundary $\Xq\setminus X$ is the union
    of $r=\|rk|X$ irreducible divisors $D_1,\ldots,D_r$.

  \item\label{it:WondVar.b} The map $I\mapsto\left(\bigcap_{i\in
        I}D_i\right)^{\|red|}$ is a bijection between subsets of
    $\{1,\ldots,r\}$ and the set of orbit closures in $\Xq$.

  \end{enumerate}
\end{corollary}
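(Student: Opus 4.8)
The plan is to realize $\Xq$ as the toroidal Luna--Vust embedding attached to the fan of all faces of $\cV(X)$, and then to read off \ref{it:WondVar.a} and \ref{it:WondVar.b} from the orbit--cone correspondence of \cite{Hyd}. The first task is to pin down the geometry of $\cV(X)$. By \autoref{cor:FundDomain} (here $p\ne2$) the cone $\cV(X)$ is a Weyl chamber for $W_X$, hence a cosimplicial cone cut out in $N_\QQ(X)$ by the inequalities $\sigma\le0$, $\sigma\in\Sigma(X)$; since $\Sigma(X)$ is linearly independent, its lineality space $\cV(X)\cap(-\cV(X))=\{v\mid\sigma(v)=0,\ \sigma\in\Sigma(X)\}$ has dimension $r-|\Sigma(X)|$, where $r:=\|rk|X$. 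I would then invoke the hypothesis, which says that $N_G(H^{\|red|})/H^{\|red|}$ is a finite group; by the embedding theory of \cite{Hyd} this is exactly the condition guaranteeing that $\cV(X)$ is strictly convex. Comparing with the lineality computation forces $|\Sigma(X)|=r$, so that $\cV(X)$ is a strictly convex simplicial cone of full dimension $r$ with precisely $r$ extremal rays $\rho_1,\dots,\rho_r$.

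With this in hand, consider the colored fan consisting of all faces of $\cV(X)$, each taken with empty set of colors. Each such face is a strictly convex cone whose relative interior is contained in $\cV(X)$, so this is a legitimate (toroidal) colored fan in the sense of \cite{Hyd}; let $X\into\Xq$ be the associated spherical embedding. It is normal by construction, and since the support of the fan is all of $\cV(X)$ the embedding is complete. Because no colors occur, the $G$-stable prime divisors of $\Xq$ are in bijection with the rays of the fan, and these are exactly $\rho_1,\dots,\rho_r$; labelling the corresponding (irreducible) boundary divisors $D_1,\dots,D_r$ yields \ref{it:WondVar.a}.

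For \ref{it:WondVar.b} I would appeal to the orbit--cone correspondence of \cite{Hyd}: the $G$-orbits of $\Xq$ are in inclusion-reversing bijection with the cones of the fan, and the orbit closure attached to a cone $\tau$ is the reduced intersection of those $D_i$ whose ray $\rho_i$ lies in $\tau$. Since $\cV(X)$ is simplicial, its faces are precisely the cones $\tau_I=\langle\rho_i\mid i\in I\rangle$ for $I\subseteq\{1,\dots,r\}$, and $I\mapsto\tau_I$ is a bijection. Composing with the orbit--cone correspondence gives the asserted bijection $I\mapsto\bigl(\bigcap_{i\in I}D_i\bigr)^{\|red|}$ between subsets of $\{1,\dots,r\}$ and orbit closures, with $I=\leer$ recovering the open orbit $X$ and $I=\{1,\dots,r\}$ the unique closed orbit.

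The main obstacle is the passage from the group-theoretic hypothesis on $H^{\|red|}$ to strict convexity of $\cV(X)$, equivalently to the identity $|\Sigma(X)|=r$. In characteristic zero this is the classical fact that $\cV(X)$ is strictly convex if and only if $N_G(H)/H$ is finite; phrasing the hypothesis in terms of $H^{\|red|}$ is precisely what absorbs the non-reducedness of stabilizers that can occur for $p>0$, and one must check that the identification of the lineality space of $\cV(X)$ with the rational cocharacter space of $N_G(H^{\|red|})/H^{\|red|}$ survives in this setting. The insertion of $(-)^{\|red|}$ in \ref{it:WondVar.b} is a second, minor characteristic-$p$ subtlety: scheme-theoretic intersections of the $D_i$ need not be reduced, so the bijection is only claimed on the level of reduced orbit closures. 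Note also that smoothness of $\Xq$ is \emph{not} asserted, which is why this is only a relaxed version of a wonderful embedding.
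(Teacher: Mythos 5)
Your proposal is correct and is essentially the paper's own proof, just written out in more detail: the normalizer hypothesis yields pointedness of $\cV(X)$ via \cite{Hyd}*{Thm.~6.1}, the completion is the toroidal embedding defined by the colored fan of faces of $\cC=\cV(X)$ with empty color sets, and both assertions follow from $\cV(X)$ being simplicial with $r=\|rk|X$ rays (pointed plus cosimplicial forcing $|\Sigma(X)|=r$). The ``main obstacle'' you flag --- transferring the finiteness condition on $N_G(H^{\|red|})/H^{\|red|}$ to strict convexity in positive characteristic --- is exactly what the characteristic-free \cite{Hyd}*{Thm.~6.1} supplies, so no further verification is needed.
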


\begin{proof}The condition on the normalizer implies that the
  valuation cone $\cV(X)$ is pointed (\cite{Hyd}*{Thm.~6.1}). Thus,
  $\cC=\cV(X)$ defines a toroidal embedding $X\into\Xq$. Properties
  \ref{it:WondVar.a} and \ref{it:WondVar.b} follow from the fact that
  $\cV(X)$ is a simplicial cone.
\end{proof}

\begin{remark}

  Observe, that, as opposed to wonderful varieties, we made no claims
  of smoothness or transversality.

\end{remark}

\Cref{thm:obtuse} has also the following consequence:

\begin{corollary}

  Every (internal) dihedral angle of the valuation cone equals either
  \begin{equation}
    \textstyle\frac{1}{6}\pi,\ \frac{1}{4}\pi,\ \frac{1}{3}\pi,\ \frac12\pi,
    \quad \frac{2}{3}\pi,\ \frac{3}{4}\pi,\ \hbox{or}\  \frac{5}{6}\pi. 
  \end{equation}
  The last three values occur only for $p=2$.

\end{corollary}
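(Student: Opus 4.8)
The plan is to translate the statement about dihedral angles of the valuation cone $\cV(X)$ into a statement about the pairwise angles between spherical roots, and then read off the possible values from the classification in \autoref{thm:obtuse}. The key observation is the following: by \autoref{cor:FundDomain} (for $p\ne2$) or more generally by the fact that $\cV(X)$ is a union of Weyl chambers, every codimension-$1$-face of $\cV(X)$ is cut out by a spherical root, and each spherical root $\sigma\in\Sigma(X)$ is non-positive on $\cV(X)$ with $\cV(X)\cap\{\sigma=0\}$ a codimension-$1$-face. Hence the spherical roots are (up to scaling) the inward-pointing normal vectors to the facets of the cone, and the internal dihedral angle $\theta$ between two adjacent facets with normals $\sigma,\tau$ satisfies $\theta=\pi-\angle(\sigma,\tau)$, where $\angle(\sigma,\tau)$ is the angle between $\sigma$ and $\tau$ with respect to the chosen $W$-invariant scalar product. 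So the task reduces to computing the possible angles $\angle(\sigma,\tau)$ for pairs of spherical roots $\sigma,\tau\in\Sigma(X)$ that span adjacent facets.

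First I would dispose of the generic case. By \autoref{thm:obtuse}, for any two distinct spherical roots we have $(\sigma,\tau)\le0$, so $\angle(\sigma,\tau)\ge\frac{\pi}{2}$ and the dihedral angle $\theta=\pi-\angle(\sigma,\tau)\le\frac{\pi}{2}$. To pin down the discrete list of values I would invoke the root-system structure from \autoref{thm:WXfinite}\ref{it:WXfinite.c}: the set $R_X=W_X\Sigma(X)$ is a finite (crystallographic) root system, and $\Sigma(X)$ is — at least when $p\ne2$ — a system of simple roots for it. For a crystallographic root system the angle between two simple roots is constrained to lie in $\{\frac{\pi}{2},\frac{2\pi}{3},\frac{3\pi}{4},\frac{5\pi}{6}\}$, corresponding to the Cartan-integer products $0,1,2,3$. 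Taking complements $\theta=\pi-\angle$ yields exactly $\frac12\pi,\ \frac13\pi,\ \frac14\pi,\ \frac16\pi$, which are four of the seven listed values.

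The remaining work is the $p=2$ analysis, which produces the acute angles $\frac{2}{3}\pi,\frac{3}{4}\pi,\frac{5}{6}\pi$, i.e.\ the cases where $(\sigma,\tau)>0$. Here I would go through the four rows of the table in \autoref{thm:obtuse} and compute $\angle(\sigma,\tau)$ explicitly using a $W$-invariant scalar product on $\Xi_\QQ(T)$ restricted to $\Xi_\QQ(X)$. In type $\A_2$ with $\sigma=\a_1$, $\tau=\a_1+\a_2$ one gets $(\sigma,\tau)>0$ with angle $\frac{\pi}{3}$, hence dihedral angle $\frac{2}{3}\pi$; the $\C_n$ and $\G_2$ rows should similarly give the values $\frac34\pi$ and $\frac56\pi$ (and one checks the $\B_n$ row gives no new value). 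I expect this case-by-case computation to be the main obstacle — not because any single calculation is deep, but because one must normalize the spherical roots correctly and keep track of the relative root lengths in types $\B$, $\C$, and $\G_2$, where the scalar-product values of $\sigma$ and $\tau$ depend on which simple roots are long or short. The cleanest route is to express each $\sigma,\tau$ in an orthonormal-type coordinate basis (the $\e_i$ already used in the proof of \autoref{lem:Wextend}), compute the Gram matrix directly, and extract $\cos\angle(\sigma,\tau)=\frac{(\sigma,\tau)}{|\sigma|\,|\tau|}$ from it. Since \autoref{thm:obtuse} asserts these four rows exhaust all pairs with $(\sigma,\tau)>0$, and each occurs in characteristic $2$ by the accompanying remark, this confirms both that the three acute values appear and that they occur only for $p=2$, completing the proof.
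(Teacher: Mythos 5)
Your proposal is correct and coincides with the paper's intended argument: the corollary is stated there without proof as an immediate consequence of \autoref{thm:obtuse}, and your derivation --- dihedral angle $=\pi-\angle(\sigma,\tau)$ between facet normals, the crystallographic restriction to $\{\frac{\pi}{2},\frac{2\pi}{3},\frac{3\pi}{4},\frac{5\pi}{6}\}$ for pairs with $(\sigma,\tau)\le0$, and the explicit check of the four rows of table \eqref{eq:32} (angles $\frac{\pi}{3},\frac{\pi}{4},\frac{\pi}{4},\frac{\pi}{6}$, hence dihedral angles $\frac{2\pi}{3},\frac{3\pi}{4},\frac{5\pi}{6}$, occurring only for $p=2$) --- is exactly the computation the paper leaves to the reader. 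One cosmetic repair: since for $p=2$ the set $\Sigma(X)$ need not be a simple system, for the pairs with $(\sigma,\tau)\le0$ you should quote the angle restriction for arbitrary (not necessarily simple) roots of the crystallographic root system $R_X$ from \autoref{thm:WXfinite}, which your own appeal to $R_X$ already supplies.
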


\section{The angles between spherical roots}\label{sec:Angle}

This section is devoted to the proof of \cref{thm:obtuse}. We are
going to use the notation
\begin{equation}\label{eq:25}
  |\sigma|_p:=|\sigma|\cap S^\tP/,\quad
  |\sigma|_s:=|\sigma|\setminus S^\tP/
\end{equation}
which we call the \emph{parabolic support} and the \emph{singular
  support} of $\sigma$. An inspection of table~\S\ref{sec:TABLE} shows
that the singular support consists of either 1 or 2 elements.

We start with a trivial but useful observation, the saturation
principle:

\begin{lemma}

  Let $\sigma$ be a spherical root, let $\a\in|\sigma|$, and let
  $\b\in S^\tP/$ be connected to $\a$ in the Dynkin diagram (i.e.,
  $(\a,\b)<0$). Then $\b\in|\sigma|$, as well.

\end{lemma}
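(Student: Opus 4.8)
The plan is to argue by contradiction: assume $\b\notin|\sigma|$ and show that the coroot $\b^\vee$ pairs strictly negatively with $\sigma$, contradicting the fact that, because $\b$ is of type \tP/, $\b^\vee$ must annihilate $\sigma$.

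First I would dispose of the purely combinatorial half, which is immediate. Writing $\sigma=\sum_{\gamma\in S}n_\gamma\gamma$ with $n_\gamma\in\ZZ_{\ge0}$, the hypothesis $\a\in|\sigma|$ gives $n_\a>0$, while the assumption $\b\notin|\sigma|$ gives $n_\b=0$. Since $\langle\gamma,\b^\vee\rangle\le0$ for every simple root $\gamma\ne\b$, and $\langle\a,\b^\vee\rangle<0$ (which is exactly the connectedness hypothesis $(\a,\b)<0$), pairing with $\b^\vee$ yields
\begin{equation*}
\langle\sigma,\b^\vee\rangle=\sum_{\gamma\in S}n_\gamma\langle\gamma,\b^\vee\rangle=\sum_{\gamma\ne\b}n_\gamma\langle\gamma,\b^\vee\rangle\le n_\a\langle\a,\b^\vee\rangle<0,
\end{equation*}
where the second equality uses $n_\b=0$.

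The crux is to show that this pairing must actually vanish, and this is where $\b\in S^\tP/$ enters. By definition $P_\b x_0=Bx_0$, so the open $B$-orbit is simultaneously a single $P_\b$-orbit. A $B$-homogeneous space carries no $B$-invariant prime divisors, so any $B$-semiinvariant rational function on $X$ has empty divisor on the open orbit and is therefore an invertible $B$-eigenfunction on $P_\b x_0\cong P_\b/(P_\b)_{x_0}$, hence the restriction of a character of $P_\b$. Every character of $P_\b$ kills $\b^\vee$, because $U_{\pm\b}\subseteq(P_\b,P_\b)$. Thus $\langle\chi,\b^\vee\rangle=0$ for all $\chi\in\Xi(X)$, i.e.\ $\b^r$ vanishes identically on $\Xi_\QQ(X)$. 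Since $\sigma\in\Sigma(X)\subseteq\Xi_p(X)\subseteq\Xi_\QQ(X)$, we get $\langle\sigma,\b^\vee\rangle=\b^r(\sigma)=0$, contradicting the strict inequality above; hence $\b\in|\sigma|$.

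The only non-formal ingredient is the vanishing $\b^r\equiv0$ for $\b\in S^\tP/$. The excerpt records the \emph{equality} $S^\tP/(X)=\{\a\mid\a^r=0\}$ only in the quasiaffine case, so I expect the main point requiring care to be the observation that the inclusion I actually use (type \tP/ forces $\a^r=0$) is the easy direction and is valid for arbitrary $X$ by the orbit argument above. Should one prefer to bypass it entirely, an alternative is to localize $X$ at the wall $\cV(X)\cap\{\sigma=0\}$ to obtain a rank-one variety with spherical root $\sigma$ whose set of \tP/-roots still contains $\b$; the compatibility criterion following \autoref{thm:MainClassifThm} then forces $\b\perp\sigma$ directly, and the same sign computation closes the argument.
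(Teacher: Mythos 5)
Your proof is correct and takes essentially the same route as the paper: the paper also assumes $\b\not\in|\sigma|$ and derives the contradiction $0=(\b,\sigma)=n_\a(\b,\a)+\sum_{\gamma\in|\sigma|\setminus\{\a\}}n_\gamma(\b,\gamma)<0$, and your pairing against $\b^\vee$ is the same computation up to the positive factor $2/(\b,\b)$. The only difference is that you spell out the justification of the vanishing $(\b,\sigma)=0$ for $\b\in S^\tP/$ (via invertibility of semiinvariants on the open orbit and Rosenlicht), a standard fact which the paper uses without comment; your observation that only the easy inclusion $S^\tP/\subseteq\{\a\mid\a^r=0\}$ is needed, not the quasiaffine equality, is accurate.
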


\begin{proof}

  Since $\a$ and $\b$ are connected we have $(\a,\b)<0$. Now suppose
  $\b\not\in|\sigma|$. Then
  \begin{equation}\label{eq:26}
    0=(\b,\sigma)=n_\a(\b,\a)+\sum_{\gamma\in|\sigma|\setminus\{\a\}}n_\gamma(\b,\gamma)<0,
  \end{equation}
  a contradiction.
\end{proof}

First we show that the angle between almost any two spherical roots is
automatically obtuse, just for combinatorial reasons. The length of a
root does not play a r\^ole for that. Therefore, we are considering
only spherical roots $\sigma$ which are \emph{reduced}, i.e., for
which $\frac12\sigma$ is not a spherical root.

\begin{lemma}

  Let $\sigma$ and $\tau$ be two distinct reduced spherical roots of
  $G$ which are compatible with some subset $S^\tP/\subseteq
  S$. Assume that both $\sigma$ and $\tau$ have connected support and
  that $(\sigma,\tau)>0$. Then, up to a switch of $\sigma$ and $\tau$,
  the triple $|\sigma|\cup|\tau|,\sigma,\tau$ is contained in the
  following table:
  \begin{equation}\label{eq:27}
    \begin{array}{lll}

      |\sigma|\cup|\tau|&\sigma&\tau\\

      \noalign{\smallskip\hrule\smallskip}

      \A_2&\a_1&\a_1+\a_2\\

      \B_2&\a_1&\a_1+\a_2\\

      \G_2&\a_2&\a_1+\a_2\\

      \noalign{\smallskip\hrule\smallskip}

      \multicolumn{3}{l}{\text{Additionally for $p=2$:}}\\

      \B_2&\a_1+\a_2&\a_1+2\a_2\\

      \B_n, n\ge2&\a_2+\ldots+\a_{n-1}+\a_n&\a_1+2\a_2+\ldots+2\a_{n-1}+2\a_n\\

      \C_n, n\ge3&2\a_2+\ldots+2\a_{n-1}+\a_n&\a_1+2\a_2+\ldots+2\a_{n-1}+\a_n\\

      \noalign{\smallskip\hrule\smallskip}

      \multicolumn{3}{l}{\text{Additionally for $p=3$:}}\\

      \G_2&\a_1&3\a_1+\a_2

    \end{array}  
  \end{equation}

\end{lemma}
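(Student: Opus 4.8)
The plan is to prove this by a combinatorial case analysis, since the statement is purely about the abstract spherical roots (listed explicitly in the example for $\A_3$, and determined in general by the table via \autoref{thm:MainClassifThm}). The key structural input is that $\sigma$ and $\tau$ are both \emph{reduced} spherical roots with \emph{connected} support, that they are \emph{compatible} with a common $S^\tP/$, and that they make a \emph{positive} (hence strictly acute) angle. Writing $\sigma=\sum_\a n_\a\a$ and $\tau=\sum_\a m_\a\a$ with nonnegative integer coefficients, the scalar product $(\sigma,\tau)$ is a sum over pairs of simple roots; since the root system is fixed and the lists of reduced spherical roots are finite per diagram type, the whole claim reduces to checking finitely many pairs.

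First I would record the basic constraints that cut the search drastically. By the saturation principle (the Lemma just proved), the supports $|\sigma|$ and $|\tau|$ are ``closed downward'' toward $S^\tP/$: any simple root in $S^\tP/$ adjacent to the support must itself lie in the support. Next I would argue that $|\sigma|$ and $|\tau|$ cannot be disjoint or even merely ``touching'' in an arbitrary way when $(\sigma,\tau)>0$: if the supports were disjoint and non-adjacent then $(\sigma,\tau)=0$; if they were disjoint but adjacent along a single edge, the contribution across that edge is a single negative term, forcing $(\sigma,\tau)<0$. So a positive angle forces the supports to overlap substantially, and in fact $|\sigma|\cup|\tau|$ must be one of the small connected diagrams appearing in the table. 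I would then enumerate, for each connected Dynkin type and each admissible $S^\tP/$, the reduced spherical roots from \autoref{thm:MainClassifThm}, and for each unordered pair compute $(\sigma,\tau)$ using the chosen $W$-invariant scalar product, keeping careful track of the long/short root ratios (this is where $\B$, $\C$, $\G_2$ behave differently from $\A$, $\D$, $\E$).

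The computation splits naturally by characteristic. For $p\ne2$ (and $p\ne3$) almost every pair of reduced spherical roots with overlapping support already yields $(\sigma,\tau)\le0$, and the only survivors are the three characteristic-free rows $\A_2$, $\B_2$, $\G_2$ at the top of \eqref{eq:27}. The extra rows arise exactly because in characteristic $2$ (respectively $3$) new spherical roots appear --- recall from the $\A_3$ example that roots such as $\a_1+q\a_3$ are new to positive characteristic, and the table entries like $\a_1+2\a_2+\dots+2\a_n$ in type $\B$ or $\G_2$'s $3\a_1+\a_2$ are precisely the $p$-specific ones. So after the generic reduction I would isolate those finitely many new reduced roots and test them against their neighbours, which produces exactly the $p=2$ and $p=3$ blocks of the table.

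The main obstacle I anticipate is \emph{bookkeeping completeness} rather than any single hard inequality: one must be sure that every reduced spherical root with connected support in every relevant rank has been listed and paired, including the longer-rank families $\B_n$ and $\C_n$ where $\sigma$ and $\tau$ share a long tail of coefficient-$2$ terms. For those families the positive contributions come from the overlapping middle of the diagram while the endpoints (governed by the short/long root lengths and by membership in $S^\tP/$ via the saturation principle) supply the delicate sign; I would handle them by a uniform telescoping computation in $n$ rather than case-by-case, verifying that only the stated two rows of \eqref{eq:27} give $(\sigma,\tau)>0$ and that all other pairs are non-acute. Once every pair is accounted for, the listed table is exactly the set of positive-angle pairs, completing the proof.
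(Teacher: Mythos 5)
Your proposal is correct and follows essentially the same route as the paper: both arguments are purely combinatorial verifications against the classification table of \autoref{thm:MainClassifThm}, using the saturation principle together with the non-positivity of products of distinct simple roots to force the supports to intersect, and then checking the finitely many remaining pairs (with the $\B_n$, $\C_n$ families handled uniformly in $n$). The only difference is organizational: where you propose brute-force enumeration over all pairs per diagram type, the paper sharpens the overlap condition to the \emph{singular} supports $|\sigma|_s$, $|\tau|_s$ (since $(\a,\tau)=0$ for every $\a\in S^\tP/$) and splits into the cases $\#|\sigma|_s\in\{1,2\}$, deriving containments such as $|\sigma|\subseteq|\tau|$ that shrink the case tables — a labor-saving device, not a different idea.
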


\begin{proof}

  The case $\#|\sigma|\cup|\tau|=2$ can be easily handled case by
  case. These provide for six cases in the table.  Therefore, assume
  from now on that $\#|\sigma|\cup|\tau|>2$. Let $\sigma=\sum_{\a\in
    S}n_\a\a$. Then, because of
  \begin{equation}\label{eq:28}
    0<(\sigma,\tau)=\sum_{\a\in|\sigma|_s}n_\a(\a,\tau)
  \end{equation}
  we have $|\sigma|_s\cap|\tau|_s=|\sigma|_s\cap|\tau|\ne\leer$.

  First, we treat the case that $\#|\sigma|_s=1$. Then
  $|\sigma|_s\subseteq|\tau|$. The saturation principle and the
  connectedness of $|\sigma|$ imply that $|\sigma|\subseteq|\tau|$. If
  also $\#|\tau|_s=1$ then, by symmetry, $|\tau|\subseteq|\sigma|$, as
  well. An inspection of the list of spherical roots shows that this
  is not possible with $\#|\sigma|>2$.

  Now let $|\tau|_s=2$. Then the saturation principle and the table
  leaves the following possibilities (recall $\#|\tau|\ge3$):
  \begin{equation}\label{eq:29}
    \begin{array}{lllr}

      |\tau|&\sigma&\tau&(\sigma,\tau)\\

      \noalign{\smallskip\hrule\smallskip}

      \B_4&\a_2+2\a_3+3\a_4=\e_2+\e_3+\e_4&\a_1+\a_2+\a_3+\a_4=\e_1&=0\\

      \C_n, n\ge2&\a_1=\e_1-\e_2&\a_1+2\a_2+\ldots+\a_n=\e_1{+}\e_2&=0\\

      \noalign{\smallskip}
      \multicolumn{4}{l}{\text{Additionally for $p=2$:}}\\

      \B_3&\a_2+2\a_3=\e_2+\e_3&\a_1+\a_2+\a_3=\e_1&=0\\

      \B_n,n\ge2&\a_1=\e_1-\e_2&\a_1+2\a_2+\ldots+2\a_n=\e_1{+}\e_2&=0\\

      \B_n,n\ge2&\a_2+\ldots+\a_n=\e_2&\a_1+2\a_2+\ldots+2\a_n=\e_1{+}\e_2&>0\\

      \C_n,n\ge2&2\a_2+\ldots+\a_n=2\e_2&\a_1+2\a_2+\ldots+\a_n=\e_1{+}\e_2&>0\\

      \C_3&\a_2+\a_3=\e_2+\e_3&2\a_1+2\a_2+\a_3=2\e_1&=0\\

      \C_4&2\a_2{+}4\a_3{+}3\a_4{=}2\e_2{+}2\e_3{+}2\e_4&2\a_1+2\a_2+2\a_3+\a_4=2\e_1&=0

    \end{array}
  \end{equation}

  Thus, we obtain exactly the two series in the statement.

  Finally, assume that $\#|\sigma|_s=\#|\tau|_s=2$. Then
  $|\sigma|_s=|\tau|_s$ would imply, as above, that $|\sigma|=|\tau|$,
  hence $\sigma=\tau$. This leaves the case $|\sigma|_s=\{\a,\b\}$,
  $|\tau|_s=\{\b,\gamma\}$. We have
  \begin{equation}\label{eq:30}
    0<(\sigma,\tau)=n_\a(\a,\tau)+n_\b(\b,\tau)
  \end{equation}
  which shows that $(\b,\tau)>0$ and, by symmetry, $(\b,\sigma)>0$. If
  one goes through all possible pairs $\b,\sigma$, the saturation
  principle shows that $|\sigma|=\{\a\}\cup T$ and, by symmetry,
  $|\tau|=\{\gamma\}\cup T$ where $T=|\sigma|\cap|\tau|$. Thus, we
  arrive at the following possibilities:
  \begin{equation}\label{eq:31}
    \begin{array}{lll}

      |\sigma|\cup|\tau|&\sigma&\tau\\
      \noalign{\smallskip\hrule\smallskip}
      \D_n, n\ge3&\a_1+\ldots+\a_{n-2}+\a_{n-1}=\e_1-\e_n&\a_1+\ldots+\a_{n-2}+\a_n=\e_1+\e_n\\
      \B_3&\a_1+\a_2=\e_1-\e_3&\a_2+\a_3=\e_2\\
      \noalign{\smallskip}
      \multicolumn3l{\text{Additionally for $p=2$:}}\\
      \C_3&\a_1+\a_2=\e_1-\e_3&2\a_2+\a_3=2\e_2

    \end{array}
  \end{equation}

  In all these cases, we have $(\sigma,\tau)=0$.
\end{proof}

\begin{proof}[Proof of \cref{thm:obtuse}]

  We treat first the case when the support of $\sigma$ is not
  connected. Then $\sigma=\a_1+q\a_2$ with $\a_1,\a_2\in S$
  orthogonal. Equation \Ethirtyseven\ implies that $\tau$ is
  orthogonal to $\lambda:=\a_1^\vee-q^{-1}\a_2^\vee$. It follows that
  $(\a_2,\tau)=x(\a_1,\tau)$ for some $x>0$. Thus,
  $0<(\sigma,\tau)=y(\a_1,\tau)$ with $y=1+qx>0$ which implies
  $\a_1\in|\tau|_s$. By symmetry, also $\a_2\in|\tau|_s$, hence
  $\{\a_1,\a_2\}=|\tau|_s$.  Now one can use the table of spherical
  roots in \S\ref{sec:TABLE} to show that this is only possible if
  $\tau=\a_1+q'\a_2$. But then $(\lambda,\tau)=0$ implies $q'=q$, in
  contradiction to $\sigma\ne\tau$.

  So, up to factors of $2$, the pair $\sigma,\tau$ is contained in
  Table \eqref{eq:27}. Looking at $S^\tP/$, observe that $\tau$ is
  necessarily reduced. Moreover, if $p\ne2$ then $\sigma$ is reduced
  as well because of the parity condition \cite{KnLoc}*{\Parity}.

  To exclude other cases in \eqref{eq:27} we use some results from
  \cite{KnLoc} which require that $\sigma$ and $\tau$ are
  \emph{neighbors in $\Sigma(X)$}. This means that
  $\QQ_{\ge0}\sigma+\QQ_{\ge0}\tau$ is a two-dimensional face of the
  cone $\QQ_{\ge0}\Sigma(X)$. Fortunately, this holds in all cases of
  interest:

  \begin{lemma}\label{lem:neighbor2}

    Let $\sigma,\tau\in\Sigma(X)$ be distinct spherical roots with
    $|\sigma|\subseteq|\tau|$. Then $\sigma$ and $\tau$ are neighbors
    in $\Sigma(X)$.

  \end{lemma}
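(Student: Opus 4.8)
The plan is to exhibit a single linear form on $\Xi_\QQ(X)$ that is nonnegative on all of $\Sigma(X)$ and whose zero locus meets the cone $C:=\QQ_{\ge0}\Sigma(X)$ in exactly $\QQ_{\ge0}\sigma+\QQ_{\ge0}\tau$; producing such a supporting form is precisely what it means for $\sigma$ and $\tau$ to be neighbors. Note first that, as distinct primitive elements of $\ZZ S\cap\Xi_p(X)$, neither $\sigma$ nor $\tau$ is a positive multiple of the other, so they are linearly independent and the locus in question is genuinely two-dimensional. It is convenient to evaluate forms pulled back from $N_\QQ(T)$ via $\Xi_\QQ(X)\into\Xi_\QQ(T)=\QQ S$, acting on the coefficient vector of a root $\rho=\sum_{\gamma\in S}n_\gamma(\rho)\gamma$ with all $n_\gamma(\rho)\ge0$.

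The first step is to reduce to the roots supported in $|\tau|$. For $\gamma\in S$ let $\omega_\gamma^\vee$ denote the fundamental coweight, so that $\omega_\gamma^\vee(\rho)=n_\gamma(\rho)\ge0$ for every $\rho\in\Sigma(X)$. Then $\phi_0:=\sum_{\gamma\in S\setminus|\tau|}\omega_\gamma^\vee$ is nonnegative on $\Sigma(X)$, and $\phi_0(\rho)=0$ if and only if $|\rho|\subseteq|\tau|$; since $|\sigma|\subseteq|\tau|$ it kills both $\sigma$ and $\tau$. Thus $\phi_0$ exhibits $C_\tau:=\QQ_{\ge0}\{\rho\in\Sigma(X)\mid|\rho|\subseteq|\tau|\}$ as a face of $C$, and because a face of a face is again a face, it suffices to show that $\sigma$ and $\tau$ span a two-dimensional face of $C_\tau$. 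Writing $\Sigma_\tau:=\{\rho\in\Sigma(X)\mid|\rho|\subseteq|\tau|\}$, this is the cone of spherical roots of the localization $X_{|\tau|}$ (cf.\ \cite{KnLoc}), inside which $\tau$ has full support $|\tau|$, so that no cancellation can shrink its support.

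The remaining, and main, step is to verify the face condition for $C_\tau$ directly: if $\sum_{\rho\in\Sigma_\tau}a_\rho\rho=b\sigma+c\tau$ with all coefficients nonnegative, then $a_\rho=0$ for every $\rho\notin\{\sigma,\tau\}$. Comparing the coefficient of a simple root $\delta\in|\tau|\setminus|\sigma|$ on both sides gives $\sum_\rho a_\rho n_\delta(\rho)=c\,n_\delta(\tau)$, which pins the $\delta$-coefficients of the participating roots to those of $\tau$; when $c=0$ this forces every participating $\rho$ to be supported in $|\sigma|$, reducing the rank. To run this to a contradiction for a genuine third root I would use the saturation principle together with the bound $\#|\rho|_s\le2$ to confine such a $\rho$ to have support exactly $|\tau|$, and then match its coefficient vector against that of $\tau$ using the explicit list of full-support spherical roots in the table of \S\ref{sec:TABLE}. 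I expect this classification-based elimination to be the principal obstacle: in characteristic $2$ the full set $\Sigma(X)$ need not be linearly independent, so the argument cannot merely invoke simpliciality and must use the nestedness $|\sigma|\subseteq|\tau|$ essentially, precisely through the fact that it forces the coefficients on $|\tau|\setminus|\sigma|$ to be proportional to those of $\tau$.
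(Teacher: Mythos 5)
Your framing is sound, and your first step coincides with the paper's: cutting down to the roots supported in $|\tau|$ (the paper gets this for free from $|\eta_1|\subseteq|\sigma|\cup|\tau|=|\tau|$ for any root $\eta_1$ occurring in a putative relation $u\sigma+v\tau=t_1\eta_1+\ldots+t_s\eta_s$ with positive coefficients, which is the correct dual formulation of non-neighborliness that you also arrive at). But the decisive step --- showing that no third root $\rho\in\Sigma_\tau$ can participate --- is exactly where your proposal stops being a proof. You defer it to a ``classification-based elimination'' (confine $\rho$ to support exactly $|\tau|$ by saturation, then match coefficient vectors against the table of \S\ref{sec:TABLE}) and never carry it out; moreover, the scaffolding for that plan is shaky. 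The identity $\sum_\rho a_\rho n_\delta(\rho)=c\,n_\delta(\tau)$ for $\delta\in|\tau|\setminus|\sigma|$ does not ``pin'' any individual coefficient, only a weighted average; it is vacuous when $|\sigma|=|\tau|$, a case your hypotheses allow (and which genuinely arises in the applications, e.g.\ the configuration $(\B_2,\a_1+\a_2,\a_1+2\a_2)$ treated right after this lemma); and saturation plus $\#|\rho|_s\le2$ does not force $|\rho|=|\tau|$ --- intermediate supports are a priori possible. So the proposal has a hole precisely at what you yourself call the principal obstacle.

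The paper closes this gap with a soft dimension count that makes all case-matching unnecessary. For $\a\in S^\tP/$ one has $\a^r=0$ on $\Xi_\QQ(X)$ (equivalently, every spherical root is compatible with $S^\tP/(X)$), so any $\eta\in\Sigma(X)$ with $|\eta|\subseteq|\tau|$ lies in $\{\chi\in\QQ\,|\tau| \mid \<\chi,\a^\vee\>=0 \text{ for all } \a\in|\tau|_p\}$, a space of dimension $\#|\tau|-\#|\tau|_p=\#|\tau|_s\le2$ by nondegeneracy of the Cartan matrix. Since $\sigma$ and $\tau$ are linearly independent elements of it, this space equals $\QQ\sigma+\QQ\tau$, and a third root $\eta_1$ in the relation would be a third distinct extremal ray of the pointed cone $\QQ_{\ge0}\Sigma(X)$ inside a $2$-plane already containing the extremal rays through $\sigma$ and $\tau$ --- impossible. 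Note the different role of the bound $\#|\tau|_s\le2$: in the paper it bounds the dimension of the ambient space of \emph{all} candidate roots at once, whereas you use it only to constrain supports, which is weaker and leaves the (in characteristic $2$ genuinely nontrivial) coefficient elimination undone. Replacing your table-matching plan by this dimension count would complete your argument.
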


  \begin{proof}

    Otherwise there is an equation
    \begin{equation}
      u\sigma+v\tau=t_1\eta_1+\ldots+t_s\eta_s
    \end{equation}
    with $u,v,t_1,\ldots,t_s>0$ and
    $\eta_1,\ldots,\eta_s\in\Sigma(X)\setminus\{\sigma,\tau\}$. Clearly
    $s\ge1$ and $|\eta_1|\subseteq|\sigma|\cup|\tau|=|\tau|$. Because
    $\eta_1$ is compatible to $|\tau|_p$ and $\#|\tau|_s\le2$ we infer
    that $\eta_1$ is a linear combination of $\sigma$ and $\tau$. But
    then one of $\sigma$, $\tau$ or $\eta_1$ could not be extremal in
    $\QQ_{\ge0}\Sigma(X)$.
  \end{proof}

  Next, we show that the three cases
  \begin{equation}\label{eq:33}
    (\A_2,\a_1,\a_1+\a_2),\ (\B_2,\a_1,\a_1+\a_2),\ (\G_2,\a_2,\a_1+\a_2)
  \end{equation}
  don't occur when $p\ne2$. For this we use heavily the machinery of
  \cite{KnLoc}. See especially \cite{KnLoc}*{\S2} for unexplained
  notation.

  In all three cases, we have $\sigma\in S^\tA/$ and let $D_1$ and
  $D_2$ be the two colors moved by $\sigma$. Because of
  $\delta_{D_1}^{(\sigma)}+\delta_{D_2}^{(\sigma)}=\sigma^r$, we have
  $\delta_{D_i}(\tau)>0$ for $i=1$ or $2$ in contradiction to
  \cite{KnLoc}*{\typeAcolorInequal}.

  For $p\ne2,3$ we are done. For $p=3$ only one more case has to be
  checked. But, using the exceptional self-isogeny of $\G_2$, that
  case can be reduced to the last case of \eqref{eq:33} which has been
  ruled out before.

  Also for $p=2$ only one more case remains to be checked, namely
  $(\B_2,\a_1+\a_2,\a_1+2\a_2)$. So assume $X=G/H$ where $H$ is a
  connected subgroup of $G=\SO(5)$ with
  $\Sigma(X)=\{\a_1+\a_2,\a_1+2\a_2\}$. Then both simple roots are of
  type \tB/. Thus there are two colors such that $\delta_D$ is
  proportional to $\a_1^\vee$ and $\a_2^\vee$, respectively. Since
  $\cV(X)$ and the two colors can be separated by a hyperplane, the
  space $X$ is affine, \cite{Hyd}*{Thm.~6.7}, and $H$ is
  reductive. Since there are only two colors, $H$ is even semisimple
  (\cite{KnLoc}*{\NumberColors}). Moreover, $\|dim|H=4$ (see
  \cite{Hyd}*{Thm.~6.6}). But such a group does not exist.
\end{proof}

\begin{remark}

  Let $\a\in S\cap\frac12\Sigma(X)$. Then Luna's axiom $\rm(\Sigma1)$
  for a spherical system stipulates that $\<\sigma,\a^\vee\>$ is a
  nonpositive even integer for all
  $\sigma\in\Sigma(X)\setminus\{2\a\}$ (see \cite{LuTypA}*{2.1} or
  \cite{BrLu}*{1.2.1}). The discussion above shows that the
  nonpositivity part is in fact superfluous since it follows from the
  other axioms (more precisely from $\rm(S)$, the parity part of
  $\rm(\Sigma1)$, and $\rm(\Sigma2)$).

\end{remark}

In passing, we proved most parts of the following statement which we
state for future reference:

\begin{proposition}

  Let $\sigma\ne\tau\in\Sigma(X)$ with $|\sigma|\subseteq|\tau|$. Then
  $(|\tau|,\sigma,\tau)$ is contained in the following table:
  \begin{equation}
    \begin{array}{llll}

      |\tau|&&\sigma&\tau\\

      \noalign{\smallskip\hrule\smallskip}

      \B_4&&\a_2+2\a_3+3\a_4&\a_1+\a_2+\a_3+\a_4\\

      \C_n,&n\ge2&\a_1&\a_1+2\a_2+\ldots+\a_n\\

      \C_n,&n\ge2&2\a_1\ (p\ne2)&\a_1+2\a_2+\ldots+\a_n\\

      \G_2&&\a_1&\a_1+\a_2\\

      \noalign{\smallskip}

      \multicolumn4l{\text{Additionally for $p=2$:}}\\

      \A_2&&\a_1&\a_1+\a_2\\

      \B_n,&n\ge2&\a_1&\a_1+2\a_2+\ldots+2\a_n\\

      \B_n,&n\ge2&\a_2+\ldots+\a_n&\a_1+2\a_2+\ldots+2\a_n\\

      \C_n,&n\ge2&2\a_2+\ldots+\a_n&\a_1+2\a_2+\ldots+\a_n\\

      \C_4&&2\a_2+4\a_3+3\a_4&2\a_1+2\a_2+2\a_3+\a_4\\

      \G_2&&\a_2&\a_1+\a_2\\

      \noalign{\smallskip}

      \multicolumn4l{\text{Additionally for $p=3$:}}\\

      \G_2&&\a_2&3\a_1+\a_2

    \end{array}
  \end{equation}

\end{proposition}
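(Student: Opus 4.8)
The statement is essentially a repackaging of the case analysis already carried out, now keeping candidates of \emph{every} sign of $(\sigma,\tau)$ rather than only the positive ones needed for \autoref{thm:obtuse}. Since $\sigma,\tau\in\Sigma(X)$, both are spherical roots of $G$ compatible with the single set $S^\tP/=S^\tP/(X)$, so $|\sigma|\subseteq|\tau|$ gives $|\sigma|_s\subseteq|\tau|_s$; moreover, by \autoref{lem:neighbor2} they are neighbors in $\Sigma(X)$, which is what licenses the structure-theoretic input below. Taking both roots reduced for the moment (the non-reduced rows are appended later), the inspection at the start of this section gives $\#|\sigma|_s,\#|\tau|_s\in\{1,2\}$, and I would organize the discussion by this pair. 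When the two singular supports have equal cardinality they must coincide, and the saturation principle together with connectedness upgrades this to $|\sigma|=|\tau|$; arguing as in the combinatorial lemma (a spherical root is pinned down by its connected support and black-vertex pattern, up to the ambiguity $\a\leftrightarrow2\a$, and $\a,2\a$ never lie in $\Sigma(X)$ simultaneously since only the primitive vector of $\ZZ S\cap\Xi_p(X)$ is a spherical root) this forces $\sigma=\tau$, contrary to hypothesis. The disconnected root $\sigma=\a_1+q\a_2$ falls outside this and is excluded exactly as in the first paragraph of the proof of \autoref{thm:obtuse}, where \Ethirtyseven\ forces $|\tau|_s=\{\a_1,\a_2\}$ and hence $\tau=\a_1+q'\a_2$ with $q'=q$.

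This leaves $\#|\sigma|_s=1<2=\#|\tau|_s$. For $\#(|\sigma|\cup|\tau|)=2$ the finitely many small cases $\A_2$, $\B_2=\C_2$, $\G_2$ are read off by hand (cf.\ \eqref{eq:33}), with \autoref{thm:obtuse} and its characteristic-$3$ reduction discarding the rows which survive only for small $p$; this produces the $\G_2$ rows, the $n=2$ instance of the $\C_n$ row, and the $\A_2$ and $\G_2$ entries special to $p=2,3$. For $\#(|\sigma|\cup|\tau|)\ge3$, saturation and connectedness of $|\sigma|$ give $|\sigma|\subseteq|\tau|$, so this is precisely the situation tabulated in \eqref{eq:29}. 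The non-reduced entries $\sigma=2\a$ for $p\ne2$ are then appended to their reduced counterparts by the parity condition \cite{KnLoc}*{\Parity} and the factor-$2$ convention of table~\S\ref{sec:TABLE}, which accounts for the $\sigma=2\a_1$ row in type $\C_n$.

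Reading \eqref{eq:29} off yields the remaining candidate pairs, among them the $\B_4$, $\C_n$ and $\C_4$ rows. Here lies the main obstacle: \eqref{eq:29} enumerates pairs that are merely \emph{abstractly compatible} with a common $S^\tP/$, whereas the Proposition asserts membership in a single $\Sigma(X)$, a strictly stronger requirement. For each surviving $(\sigma,\tau)\le0$ candidate one must therefore decide, using the realizability machinery deployed for the positive-angle rows in the proof of \autoref{thm:obtuse} — the colors moved by a type-$\tA/$ root and the inequality \typeAcolorInequal, the affineness criterion, the dimension count for the resulting $H$, and the neighbor property of \autoref{lem:neighbor2} — whether it genuinely occurs. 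Carrying out this pruning, together with the purely organizational bookkeeping of matching each surviving pair in each characteristic to its row, is the part of the argument that goes beyond a direct quotation of the earlier tables, and is where I expect the real work to lie.
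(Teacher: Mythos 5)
Your combinatorial skeleton matches the paper's: the actual proof consists of quoting table \eqref{eq:29}, treating the cases with $\#|\tau|=2$ by hand, and appending the non-reduced $2\a_1$ row in type $\C_n$ via the parity condition. One soft spot in that skeleton: your claim that equal cardinality of the singular supports forces $\sigma=\tau$ because ``a spherical root is pinned down by its connected support and black-vertex pattern up to $\a\leftrightarrow2\a$'' is only valid for $\#|\tau|>2$. For rank-two supports it fails: $\a_1+\a_2$ and $\a_1+2\a_2$ in $\B_2$ are distinct reduced roots with identical support, compatible with a common $S^\tP/$, and that pair is eliminated only by the structure-theoretic argument at the end of the proof of \autoref{thm:obtuse}, not by any uniqueness of the diagram. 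Since you route all $\#(|\sigma|\cup|\tau|)=2$ cases through a separate hand check, this is repairable, but as written the justification is wrong.

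The genuine gap is that you stop precisely where the paper's proof has its one substantive step. Because the Proposition asserts only \emph{containment} in the table, occurrence of the listed rows needs no proof (the paper relegates it to a remark); what must be proved is the exclusion of every candidate from \eqref{eq:29} that is absent from the table, and there is exactly one such, up to isogeny: $(|\tau|,\sigma,\tau)=(\C_3,\,\a_2+\a_3,\,2\a_1+2\a_2+\a_3)$, together with its isogenous twin $(\B_3,\,\a_2+2\a_3,\,\a_1+\a_2+\a_3)$, for $p=2$ — a pair with $(\sigma,\tau)=0$, hence untouched by \autoref{thm:obtuse}. Your sketch names the right tools but never identifies this candidate nor executes the argument, and your phrase ``reading \eqref{eq:29} off yields the remaining candidate pairs'' would, taken literally, leave the $\C_3$ and $\B_3$ rows of \eqref{eq:29} in the final table, so the Proposition as you prove it would be false. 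The missing argument runs: by \autoref{lem:neighbor2} one may localize at $\Sigma$ (\cite{KnLoc}*{6.1}) and assume $\Sigma(X)=\{\sigma,\tau\}$ with $H\subseteq G=\Sp(6)$ connected; both colors are of type \tB/, so $\delta_{D_i}$ is a positive multiple of $\a_i^r$, whence $X$ is affine by the criterion in \cite{Hyd} and $H$ is reductive, indeed semisimple by \cite{KnLoc}*{2.1}; from $S^\tP/=\{\a_2\}$ one computes $\|dim|H=11$, forcing $H$ to be of type $\A_1\times\A_2$; but $\|rk|H=\|rk|G=3$ would make the root system of $H$ a subroot system of $\C_3$, which contains no $\A_1\times\A_2$. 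Without this single dimension-and-rank contradiction — the entire non-bookkeeping content of the paper's proof — your proposal is an accurate plan, not a proof.
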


\begin{proof}

  Most cases are already contained in table \eqref{eq:29}. Missing
  from that table are those cases with $\#|\tau|=2$ which can be
  easily dealt with by hand. So it remains to be shown that the case
  $(|\tau|,\tau,\sigma)=(\C_3,2\a_1+2\a_2+\a_3,\a_2+\a_3)$ (and the
  isogenous case $(\B_3,\a_1+\a_2+\a_3,\a_2+2\a_3)$) with $p=2$ does
  not occur.

  By \cref{lem:neighbor2} we may localize at $\Sigma$ (see
  \cite{KnLoc}*{6.1}). Thus, it suffices to exclude the existence of a
  connected subgroup $H\subseteq G=\Sp(6)$ with
  $\Sigma(X)=\{\sigma,\tau\}$. There are two colors $D_1$ and $D_2$
  which are both of type \tB/. Hence $\delta_{D_i}$ is a positive
  multiple of $\a_i^r$. The affinity criterion in \cite{Hyd} shows
  that $H$ is reductive. Then \cite{KnLoc}*{2.1} implies that $H$ is
  even semisimple. Moreover, from $S^\tP/=\{\a_2\}$ one computes
  $\|dim|H=11$. Hence, $H$ must be of type $\A_1\times\A_2$. Since
  $\|rk|H=\|rk|G=3$ the root system of $H$ would be a subroot system
  of $G$ which is not the case.
\end{proof}

\begin{remark}

  All cases occur. For the first four see, e.g., \cite{Wass}. The
  others are treated in the remark after \cref{thm:obtuse} or are
  isogenous to one of the previous cases.

\end{remark}

\section{Invariant valuations of arbitrary $G$-varieties}
\label{ArbVar}

For $\|char|k=0$, Brion's Theorem on the valuation cone of spherical
varieties has been used in \cite{InvBew} to obtain generalizations for
arbitrary $G$-varieties. Now that we have similar results at our
disposal we can do the same in arbitrary characteristic.

The idea is to consider everything ``relative'' to
$B$-invariants. More precisely, let $X$ be any $G$-variety and let
$K=k(X)^B$ be its field of $B$-invariant rational functions. Clearly,
$K=k$ means that $X$ is spherical. In general, the transcendence
degree of $K$ over $k$ is called the \emph{complexity of $X$} so that
spherical means complexity 0.

Let $\cV(X)$ be the set of $G$-invariant ($\QQ$-valued) valuations of
the filed $k(X)$. This set plays the same r\^ole in determining
equivariant compactifications of $X$ as in the spherical case (see
\cite{LV}). Because it is, in general, too big to control, we
partition it into manageable pieces by fixing the restriction to
$K$. More precisely, for a valuation $v_0$ of $K$ put
\begin{equation}\label{eq:35}
  \cV_{v_0}(X):=\{v\in\cV(X)\mid v|_K=v_0\}.
\end{equation}
It turns out (see below) that these sets are quite easy to
understand. So if it is possible to control all valuations of $K$ then
it is possible to understand $\cV(X)$ as a whole. This is trivially
the case for $K=k$ (i.e., spherical varieties) but also if $K$ is of
transcendence degree 1 over $k$ (the case of complexity 1).

Of particular importance are \emph{central valuations}, i.e.,
valuations whose restriction to $K$ is trivial. These are precisely
the elements of $\cV_0(X)$. Now consider the group $k(X)^{(B)}$ of
$B$-semiinvariant rational functions on $X$ and let
$\Xi(X)\subseteq\Xi(T)$ be the group of ensuing characters. Then we
obtain a short exact sequence
\begin{equation}\label{eq:36}
  1\pfeil K^*\pfeil k(X)^{(B)}\pfeil\Xi(X)\pfeil0.
\end{equation}
A central valuation $v$ is by definition trivial on $K^*$. Hence it
defines a homomorphism on $\Xi(X)$. This way we get a map
\begin{equation}
  \cV_0(X)\pfeil N^0_\QQ(X):=\|Hom|(\Xi(X),\QQ)
\end{equation}
which turns out to be injective (\cite{InvBew}*{3.6}). It identifies
$\cV_0(X)$ with a finitely generated convex cone
(\cite{InvBew}*{6.5}). The generalization of \cref{cor:FundDomain} is:

\begin{theorem}

  Let $X$ be a $G$-variety defined over a field of characteristic
  $p\ne2$. Then the set $\cV_0(X)$ of central valuations is the
  fundamental domain for a finite reflection group $W_X$ acting on
  $\Xi_p(X)$.

\end{theorem}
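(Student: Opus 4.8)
The plan is to reduce the statement about arbitrary $G$-varieties to the spherical case, which has already been settled in \autoref{cor:FundDomain}, following the strategy of \cite{InvBew} but now using the characteristic-$p\ne2$ results established above. The key mechanism is that central valuations, being trivial on $K=k(X)^B$, depend only on the ``spherical data'' of $X$ encoded in $\Xi(X)$ via the exact sequence \eqref{eq:36}. First I would make precise the relationship between $\cV_0(X)$ and the valuation cone of a genuinely spherical variety. The natural candidate is to find a spherical $G$-variety $X'$ (or a family thereof) whose weight lattice is $\Xi(X)$ and whose valuation cone, under the identification $N^0_\QQ(X)\cong N_\QQ(X')$, matches $\cV_0(X)$. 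The construction in \cite{InvBew} produces such an $X'$ by a suitable localization or by passing to a ``generic'' slice transverse to the $B$-invariants; once this is in place, the spherical root data $\Sigma(X')$ can be transported back to define the reflection group $W_X$ acting on $\Xi_p(X)$.

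Concretely, I would proceed in the following steps. First, recall from \cite{InvBew} that $\cV_0(X)$ sits inside $N^0_\QQ(X)=\|Hom|(\Xi(X),\QQ)$ as a finitely generated convex cone, and that the faces of this cone are governed by ``spherical roots'' $\sigma$ defined exactly as in the spherical case: primitive elements of $\Xi_p(X)\cap\ZZ S$ that are nonpositive on $\cV_0(X)$ and whose vanishing hyperplane cuts out a codimension-$1$-face. Second, I would invoke the reduction-to-spherical principle to show that every such $\sigma$ belongs to $\Sigma(G)$, so that \autoref{lem:Wextend} applies verbatim: each $\sigma$ yields a reflection $s_\sigma$ realized by an element of $W$, and hence the little Weyl group $W_X$ generated by these reflections is finite. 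Third, I would apply \autoref{thm:obtuse}: since $p\ne2$, any two distinct spherical roots governing faces of $\cV_0(X)$ satisfy $(\sigma,\tau)\le0$, so the set of these roots is linearly independent and forms a system of simple roots for the root system $R_X=W_X\Sigma$. The standard fact that a simplicial cone cut out by the simple-root reflections is a fundamental domain for the Weyl group then gives that $\cV_0(X)$ is a single Weyl chamber, i.e., a fundamental domain for $W_X$.

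The main obstacle I expect is the reduction step itself: verifying that the combinatorial invariants ($\Xi_p$, the type-\tP/ and type-\tA/ structure, the color inequalities from \cite{KnLoc}) survive the passage from the arbitrary variety $X$ to its central-valuation data, so that both \autoref{thm:obtuse} and the color-theoretic results it relies on can legitimately be applied. In the spherical case these notions are attached directly to the open $B$-orbit, whereas for central valuations one works relative to $K$; the content of \cite{InvBew} is precisely that this relativization is formally parallel, and I would cite \cite{InvBew}*{3.6} and \cite{InvBew}*{6.5} for the identification and finite generation, then argue that the face structure of $\cV_0(X)$ is detected by genuine spherical roots. Once that parallelism is granted, everything else is a formal consequence of the already-proved characteristic-$p\ne2$ theorems, exactly as the final sentence of the introduction asserts (``this statement is a formal consequence of the spherical case''), so the remaining work is bookkeeping rather than new geometry.
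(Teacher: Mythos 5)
Your proposal takes essentially the same route as the paper: the paper's proof is a one-line reduction to the spherical case via \cite{InvBew}*{9.2}, i.e., it transports the characteristic-zero argument verbatim and substitutes \autoref{cor:FundDomain} for Brion's theorem, which is exactly the strategy you outline. Your additional elaboration (realizing $\cV_0(X)$ as the valuation cone of an associated spherical variety and transporting the root data) is precisely the content of the cited reduction in \cite{InvBew}, so there is no substantive difference.
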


\begin{proof}

  Follows from the spherical case in the same way as in characteristic
  zero (\cite{InvBew}*{9.2}). See also the slightly overoptimistic
  remark after the theorem.
\end{proof}

Surprisingly, this determines also the structure of non-central
valuations. They have to satisfy a mild technical condition, though: a
valuation $v$ of $k(X)$ is called \emph{geometric} if there exists a
normal model $\Xq$ of $X$ and an irreducible divisor $D$ in $\Xq$ such
that $v$ is a rational multiple of the induced valuation $v_D$. It is
known that an invariant valuation is geometric if and only if its
restriction to $K$ is geometric (\cite{InvBew}*{4.4}). Since
valuations of fields of transcendence degree $\le1$ are always
geometric, geometricity is no restriction for varieties of complexity
$\le1$.

Now fix a geometric valuation $v_0$ and let $N^{v_0}_\QQ(X)$ be the
set of homomorphisms $v:k(X)^{(B)}\pfeil\QQ$ such that
$v|_{K^*}=v_0$. Any two elements differ by an element of
$N^0_\QQ(X)$. This means that $N^{v_0}_\QQ(X)$ has the structure of an
affine space with $N^0_\QQ(X)$ as group of translations. As before,
the map $\cV_{v_0}(X)\pfeil N^{v_0}_\QQ(X)$ is injective, thereby
identifying $\cV_{v_0}(X)$ with a locally polyhedral convex set.

\begin{theorem}

  Let $X$ be a $G$-variety defined over a field of characteristic
  $p\ne2$ and let $v_0$ be a geometric valuation of $K=k(X)^B$. Then
  $\cV_{v_0}(X)=\tilde v+\cV_0(X)$ for some $\tilde v\in
  \cV_{v_0}(X)$. In particular, there is an action of $W_X$ on
  $N^{v_0}_\QQ(X)$ generated by affine linear reflections such that
  $\cV_{v_0}(X)$ is a fundamental domain for this action.

\end{theorem}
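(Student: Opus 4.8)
The plan is to derive the whole statement from the central theorem above together with the formal invariant-valuation machinery of \cite{InvBew}, arranged so that the hypothesis $p\ne2$ enters only through that central theorem. I would first dispose of the ``in particular'' clause, which is a free consequence of the displayed equality $\cV_{v_0}(X)=\tilde v+\cV_0(X)$. Indeed $N^{v_0}_\QQ(X)$ is a torsor under the vector space $N^0_\QQ(X)$, so fixing the base point $\tilde v$ identifies the two. Transporting through this identification the linear reflection action of $W_X$ on $N^0_\QQ(X)$ supplied by the central theorem produces an action on $N^{v_0}_\QQ(X)$ by affine linear reflections $u\mapsto\tilde v+s_\sigma(u-\tilde v)$, and under this action $\tilde v+\cV_0(X)=\cV_{v_0}(X)$ is exactly the image of the fundamental domain $\cV_0(X)$, hence itself a fundamental domain. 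Thus the entire content is the set equality.

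Next I would produce the base point $\tilde v$, and this is the one step where geometricity of $v_0$ is genuinely used. A geometric valuation of $K$ is, up to a rational factor, the order of vanishing along a prime divisor of a normal model of $K$; passing to a suitable normal $G$-model of $X$ one realizes it by a $G$-invariant divisorial valuation of $k(X)$, that is, by an element $\tilde v\in\cV(X)$ with $\tilde v|_K=v_0$. This is precisely the correspondence between geometric invariant valuations of $k(X)$ and geometric valuations of $K$ recorded in \cite{InvBew}*{4.4}, and in particular it gives $\cV_{v_0}(X)\ne\leer$. The point needing care is the $G$-invariance of the chosen divisor, as opposed to mere $B$-invariance of the fibre of a rational $B$-quotient; this is what the cited structure theory supplies.

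With $\tilde v$ fixed I would view $\cV(X)$, via the injective restriction-to-semiinvariants maps, as a subset of $\|Hom|(k(X)^{(B)},\QQ)$, inside which $N^{v_0}_\QQ(X)$ is the fibre over $v_0$ and $N^0_\QQ(X)$ the fibre over the trivial valuation. The equality then splits into two additivity assertions: \emph{(A)} if $v\in\cV_{v_0}(X)$ then the difference $v-\tilde v$, a priori only a point of $N^0_\QQ(X)$ since both restrict to $v_0$ on $K^*$, is in fact a central valuation, i.e.\ lies in $\cV_0(X)$; and conversely \emph{(B)} for $w\in\cV_0(X)$ the sum $\tilde v+w$ is realized by a $G$-invariant valuation, which then lies in $\cV_{v_0}(X)$ because its restriction to $K^*$ equals $v_0+0=v_0$. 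Both are instances of the general principle that every nonempty fibre of the restriction map $\cV(X)\to\{\text{valuations of }K\}$ is a single coset of $\cV_0(X)$. Following the characteristic-$0$ argument, one shows that on each fibre $N^{v_0}_\QQ(X)$ the image of $\cV(X)$ is cut out by finitely many affine-linear inequalities whose linear parts are the pairings with the spherical roots governing $W_X$; since those linear parts are exactly the inequalities defining the central cone $\cV_0(X)\subseteq N^0_\QQ(X)$, the solution set on the fibre is a translate of $\cV_0(X)$, which yields \emph{(A)} and \emph{(B)} simultaneously. This reduction is purely formal and characteristic-independent, so I would invoke it from \cite{InvBew}*{9.2} and the structural results preceding it.

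The hard part is this saturation step, and within it the inclusion \emph{(B)}: one must verify that adding an arbitrary central valuation to $\tilde v$ never destroys the valuation-cone inequalities, i.e.\ that the defining cone really is a product in the $N^0_\QQ(X)$-direction over the geometric locus. This is exactly the content imported from \cite{InvBew}; the present paper contributes to it only indirectly, by guaranteeing through the central theorem that in characteristic $p\ne2$ the central piece $\cV_0(X)$ carries the reflection-group structure of $W_X$ in the first place. Once \emph{(A)} and \emph{(B)} are established the displayed equality follows, and with it the theorem.
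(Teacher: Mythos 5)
Your proposal is correct and follows essentially the same route as the paper: the paper's proof is precisely the observation that the characteristic-zero argument of \cite{InvBew}*{9.2} goes through verbatim once the central (spherical) theorem is available for $p\ne2$, together with the definition of the action $w(\tilde v+v)=\tilde v+wv$, which is your ``in particular'' step. Your additional unpacking (existence of $\tilde v$ via \cite{InvBew}*{4.4}, the torsor structure, the two inclusions) is a faithful expansion of what that citation contains rather than a different argument.
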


\begin{proof}

  Again, the same proof (\cite{InvBew}*{9.2}) as in characteristic
  zero works. The $W_X$-action is defined by $w(\tilde v+v)=\tilde
  v+wv$ where $v\in N^{v_0}_\QQ(X)$.
\end{proof}

\begin{remark}

  Observe that this action does not depend on the choice of $\tilde v$
  because any two choices differ by a $W_X$-fixed vector.

\end{remark}

Now we glue everything together. For this let
\begin{equation}
  \cV^{\|geom|}(X)=\{v\in\cV(X)\mid v\hbox{ geometric}\}
\end{equation}
Moreover, it can be shown (\cite{InvBew}*{4.3}) that
$\bigcup\limits_{v_0\ {\rm geom.}}N^{v_0}_\QQ(X)$ equals the set
$\cV^{\|geom|}(k(X)^U;T)$ of $T$-invariant geometric valuations of the
field of $U$-invariant rational functions on $X$ (with $U=(B,B)$).

\begin{corollary}

  Let $p\ne2$. Then there is a $W_X$-action on
  $\cV^{\|geom|}(k(X)^U;T)$, acting as an affine linear reflection
  group on each piece $N^{v_0}_\QQ(X)$, such that $\cV^{\|geom|}(X)$
  is a fundamental domain for this action.

\end{corollary}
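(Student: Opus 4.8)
The plan is to glue the piecewise statements of the two preceding theorems over the base $K=k(X)^B$. The starting point is the decomposition of $\cV^{\|geom|}(X)$ according to restriction to $K$: since an invariant valuation of $k(X)$ is geometric precisely when its restriction to $K$ is geometric (\cite{InvBew}*{4.4}), the fibers of the restriction map yield a disjoint union
\begin{equation}
\cV^{\|geom|}(X)=\bigsqcup_{v_0\ {\rm geom.}}\cV_{v_0}(X),
\end{equation}
with $v_0$ ranging over the geometric valuations of $K$. On the ambient side, the identification recalled just above this corollary (via \cite{InvBew}*{4.3}) writes $\cV^{\|geom|}(k(X)^U;T)$ as the disjoint union $\bigsqcup_{v_0\ {\rm geom.}}N^{v_0}_\QQ(X)$ over the same index set.

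First I would transport the affine reflection actions onto these pieces. For each geometric $v_0$ the previous theorem furnishes an affine linear action of $W_X$ on $N^{v_0}_\QQ(X)$, namely $w(\tilde v+v)=\tilde v+wv$ for $v\in\cV_0(X)\subseteq N^0_\QQ(X)$, with linear part the fixed $W_X$-action on the translation space $N^0_\QQ(X)$, and with $\cV_{v_0}(X)$ as a fundamental domain. Because $wv$ again lies in $N^0_\QQ(X)$, the point $\tilde v+wv$ stays in $N^{v_0}_\QQ(X)$, so this action preserves the fiber $N^{v_0}_\QQ(X)$ and does not alter the restriction $v_0$ to $K$. Declaring the $W_X$-action on $\cV^{\|geom|}(k(X)^U;T)$ to be the given action on each piece thus produces a well-defined global action that restricts to an affine linear reflection group on every $N^{v_0}_\QQ(X)$.

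It then remains to glue the fundamental domains. Since $W_X$ stabilizes each piece and the whole space is the disjoint union of the pieces, a subset meets every global orbit exactly once if and only if, in each piece, it meets every orbit of the piecewise action exactly once; taking $\cV_{v_0}(X)$ in each fiber and summing over $v_0$ exhibits $\cV^{\|geom|}(X)$ as a fundamental domain for the global action. I do not expect a genuine obstacle here: the substance is entirely carried by the two theorems together with the identification of the two unions, and the only points to verify---that the decomposition is disjoint and $W_X$-stable, and that the piecewise action is independent of the auxiliary base point $\tilde v$---are immediate, the latter being exactly the remark following the previous theorem.
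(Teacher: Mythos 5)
Your proposal is correct and takes essentially the same route as the paper: the corollary is stated there without a separate proof precisely because it follows by gluing the two preceding theorems over the decomposition $\cV^{\|geom|}(k(X)^U;T)=\bigcup_{v_0}N^{v_0}_\QQ(X)$ supplied by \cite{InvBew}*{4.3}, exactly as you do. The points you verify explicitly (disjointness of the pieces, $W_X$-stability of each $N^{v_0}_\QQ(X)$, and independence of the base point $\tilde v$) are the same immediate checks the paper leaves implicit, the last being its remark after the second theorem.
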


\newpage

\section{Table of cuspidal spherical varieties of rank 1 for groups of
  adjoint type}
\label{sec:TABLE}

\def\oo(#1,#2;#3){\draw[thick] (#1,#2) node[above]{$\ph#3$} circle
  (3pt);} \def\ph{\vrule width 0pt height 8pt}
\def\xx(#1,#2;#3){\filldraw[thick] (#1,#2) node[above]{$#3$} circle
  (3pt);} \def\ddd(#1,#2){ \draw[thick]($(#1,#2)+(0.08,0)$) --
  ($(#1,#2)+(0.6,0)$); \filldraw[thick] ($(#1,#2)+(0.8,0)$) circle
  (0.3pt); \filldraw[thick] ($(#1,#2)+(1,0)$) circle (0.3pt);
  \filldraw[thick] ($(#1,#2)+(1.2,0)$) circle (0.3pt);
  \draw[thick]($(#1,#2)+(1.4,0)$) -- ($(#1,#2)+(2,0)$);}

\def\scalar{0.7}

$$
\begin{array}{llll}

  G&H&&\sigma, S^\tP/\\
  \noalign{\smallskip\hrule\smallskip}
  \PGL(2)&\<s_{a_1}\>\cdot\GL(1)&&
  \begin{tikzpicture}[scale=\scalar]
    \oo(0,0;1)
  \end{tikzpicture}[\times2]_{p\ne2}\\

  \PGL(n)&\GL(n-1)&n\ge3&
  \begin{tikzpicture}[scale=\scalar]
    \oo(0,0;1) \draw[thick] (0.1,0)--(0.9,0); \xx(1,0;1) \draw[thick]
    (1.1,0)--(1.9,0); \xx(2,0;1) \ddd(2,0) \xx(4,0;1) \draw[thick]
    (4.1,0)--(4.9,0); \oo(5,0;1)
  \end{tikzpicture}
  \\

  \PGL(4)&\PSp(4)&&
  \begin{tikzpicture}[scale=\scalar]
    \xx(0,0;1) \oo(1,0;2) \xx(2,0;1) \draw[thick] (0.1,0)--(0.9,0);
    \draw[thick] (1.1,0)--(1.9,0);
  \end{tikzpicture}\\
  \SO(2n+1)&\<s_{\a_n}\>\cdot\SO(2n)&n\ge2&
  \begin{tikzpicture}[scale=\scalar]
    \oo(0,0;1) \draw[thick] (0.1,0)--(0.9,0); \xx(1,0;1) \draw[thick]
    (1.1,0)--(1.9,0); \xx(2,0;1) \ddd(2,0) \xx(4,0;1) \draw[thick]
    (4.1,0.05)--(4.85,0.05); \draw[thick] (4.1,-0.05)--(4.85,-0.05);
    \xx(5,0;1) \draw[thick] (4.9,0) -- (4.7,0.15); \draw[thick]
    (4.9,0) -- (4.7,-0.15);
  \end{tikzpicture}[\times2]_{p\ne2}
  \\

  \SO(2n+1)&P_n(\SO(2n))&n\ge2&
  \begin{tikzpicture}[scale=\scalar]
    \oo(0,0;1) \draw[thick] (0.1,0)--(0.9,0); \xx(1,0;1) \draw[thick]
    (1.1,0)--(1.9,0); \xx(2,0;1) \ddd(2,0) \xx(4,0;1) \draw[thick]
    (4.1,0.05)--(4.85,0.05); \draw[thick] (4.1,-0.05)--(4.85,-0.05);
    \oo(5,0;1) \draw[thick] (4.9,0) -- (4.7,0.15); \draw[thick]
    (4.9,0) -- (4.7,-0.15);
  \end{tikzpicture}
  \\

  \SO(7)&\G_2&&
  \begin{tikzpicture}[scale=\scalar]
    \xx(0,0;1) \draw[thick] (0.1,0)--(0.9,0); \xx(1,0;2) \draw[thick]
    (1.1,0.05)--(1.85,0.05); \draw[thick] (1.1,-0.05)--(1.85,-0.05);
    \oo(2,0;3) \draw[thick] (1.9,0) -- (1.7,0.15); \draw[thick]
    (1.9,0) -- (1.7,-0.15);
  \end{tikzpicture}
  \\

  \PSp(2n)&\Sp(2)\cdot\Sp(2n-2)&n\ge3&
  \begin{tikzpicture}[scale=\scalar]
    \xx(0,0;1) \draw[thick] (0.1,0)--(0.9,0); \oo(1,0;2) \draw[thick]
    (1.1,0)--(1.9,0); \xx(2,0;2) \ddd(2,0) \xx(4,0;2) \draw[thick]
    (4.15,0.05)--(4.9,0.05); \draw[thick] (4.15,-0.05)--(4.9,-0.05);
    \xx(5,0;1) \draw[thick] (4.1,0) -- (4.3,0.15); \draw[thick]
    (4.1,0) -- (4.3,-0.15);
  \end{tikzpicture}
  \\

  \PSp(2n)&P_1(\Sp(2))\cdot\Sp(2n-2)&n\ge3&
  \begin{tikzpicture}[scale=\scalar]
    \oo(0,0;1) \draw[thick] (0.1,0)--(0.9,0); \oo(1,0;2) \draw[thick]
    (1.1,0)--(1.9,0); \xx(2,0;2) \ddd(2,0) \xx(4,0;2) \draw[thick]
    (4.15,0.05)--(4.9,0.05); \draw[thick] (4.15,-0.05)--(4.9,-0.05);
    \xx(5,0;1) \draw[thick] (4.1,0) -- (4.3,0.15); \draw[thick]
    (4.1,0) -- (4.3,-0.15);
  \end{tikzpicture}
  \\

  \PSO(2n)&\SO(2n-1)&n\ge4&
  \begin{tikzpicture}[scale=\scalar]
    \oo(0,0;2) \draw[thick] (0.1,0)--(0.9,0); \xx(1,0;2) \ddd(1,0)
    \xx(3,0;2) \draw[thick] (3.1,0)--(3.9,0); \xx(4,0;2)
    \xx(4.5,0.5;1) \xx(4.5,-0.5;1) \draw[thick] (4.0,0) -- (4.5,0.5);
    \draw[thick] (4.0,0) -- (4.5,-0.5);
  \end{tikzpicture}
  \\

  \F_4&\Spin(9)&&
  \begin{tikzpicture}[scale=\scalar]
    \xx(0,0;1) \draw[thick] (0.1,0)--(0.9,0); \xx(1,0;2) \draw[thick]
    (1.1,0.05)--(1.85,0.05); \draw[thick] (1.1,-0.05)--(1.85,-0.05);
    \xx(2,0;3) \draw[thick] (1.9,0) -- (1.7,0.15); \draw[thick]
    (1.9,0) -- (1.7,-0.15); \oo(3,0;2) \draw[thick] (2.1,0)--(2.9,0);
  \end{tikzpicture}
  \\

  \G_2&\<s_{\a_1}\>\cdot\SL(3)&&
  \begin{tikzpicture}[scale=\scalar]
    \oo(4,0;2) \draw[thick] (4.2,0.07)--(4.9,0.07); \draw[thick]
    (4.1,0)--(4.9,0); \draw[thick] (4.2,-0.07)--(4.9,-0.07);
    \xx(5,0;1) \draw[thick] (4.1,0) -- (4.3,0.15); \draw[thick]
    (4.1,0) -- (4.3,-0.15);
  \end{tikzpicture}[\times2]_{p\ne2}
  \\

  \G_2&\multicolumn2l{\GL(2)_{\rm long}U_{2\a_1{+}\a_2}U_{3\a_1{+}\a_2}U_{3\a_1{+}2\a_2}}&
  \begin{tikzpicture}[scale=\scalar]
    \oo(0,0;1) \draw[thick] (0.2,0.07)--(0.9,0.07); \draw[thick]
    (0.1,0)--(0.9,0); \draw[thick] (0.2,-0.07)--(0.9,-0.07);
    \oo(1,0;1) \draw[thick] (0.1,0) -- (0.3,0.15); \draw[thick]
    (0.1,0) -- (0.3,-0.15);
  \end{tikzpicture}

  \\
  \noalign{\smallskip\hrule}
  \PGL(2)\times\PGL(2)&(\|id|\times F_q)\PGL(2)&&
  \begin{tikzpicture}[scale=\scalar]
    \oo(0,0;q) \oo(1,0;1)
    \draw[thick](0,-0.1)--(0,-0.3)--(1,-0.3)--(1,-0.1);
  \end{tikzpicture}
  \\
  \multicolumn{3}{r}{\text{$F_q$=Frobenius morphism, $q=p^l\ge1$}}&\\

  \noalign{\smallskip\hrule\smallskip}

  \multicolumn4l{\text{Additionally for $p=2$:}}\\

  \SO(2n+1)&\SO(3)\times\SO(2n-1)&n\ge3&
  \begin{tikzpicture}[scale=\scalar]
    \xx(0,0;1) \draw[thick] (0.1,0)--(0.9,0); \oo(1,0;2) \draw[thick]
    (1.1,0)--(1.9,0); \xx(2,0;2) \ddd(2,0) \xx(4,0;2) \draw[thick]
    (4.1,0.05)--(4.85,0.05); \draw[thick] (4.1,-0.05)--(4.85,-0.05);
    \xx(5,0;2) \draw[thick] (4.9,0) -- (4.7,0.15); \draw[thick]
    (4.9,0) -- (4.7,-0.15);
  \end{tikzpicture}
  \\

  \SO(2n+1)&P_1(\SO(3))\times\SO(2n-1)&n\ge3&
  \begin{tikzpicture}[scale=\scalar]
    \oo(0,0;1) \draw[thick] (0.1,0)--(0.9,0); \oo(1,0;2) \draw[thick]
    (1.1,0)--(1.9,0); \xx(2,0;2) \ddd(2,0) \xx(4,0;2) \draw[thick]
    (4.1,0.05)--(4.85,0.05); \draw[thick] (4.1,-0.05)--(4.85,-0.05);
    \xx(5,0;2) \draw[thick] (4.9,0) -- (4.7,0.15); \draw[thick]
    (4.9,0) -- (4.7,-0.15);
  \end{tikzpicture}
  \\

  \PSp(2n)&\<s_{\a_n}\>\cdot\PSO(2n)&n\ge2&
  \begin{tikzpicture}[scale=\scalar]
    \oo(0,0;2) \draw[thick] (0.1,0)--(0.9,0); \xx(1,0;2) \draw[thick]
    (1.1,0)--(1.9,0); \xx(2,0;2) \ddd(2,0) \xx(4,0;2) \draw[thick]
    (4.15,0.05)--(4.9,0.05); \draw[thick] (4.15,-0.05)--(4.9,-0.05);
    \xx(5,0;1) \draw[thick] (4.1,0) -- (4.3,0.15); \draw[thick]
    (4.1,0) -- (4.3,-0.15);
  \end{tikzpicture}
  \\

  \PSp(2n)&P_n(\PSO(2n))&n\ge2&
  \begin{tikzpicture}[scale=\scalar]
    \oo(0,0;2) \draw[thick] (0.1,0)--(0.9,0); \xx(1,0;2) \draw[thick]
    (1.1,0)--(1.9,0); \xx(2,0;2) \ddd(2,0) \xx(4,0;2) \draw[thick]
    (4.15,0.05)--(4.9,0.05); \draw[thick] (4.15,-0.05)--(4.9,-0.05);
    \oo(5,0;1) \draw[thick] (4.1,0) -- (4.3,0.15); \draw[thick]
    (4.1,0) -- (4.3,-0.15);
  \end{tikzpicture}
  \\

  \PSp(6)&\G_2&&
  \begin{tikzpicture}[scale=\scalar]
    \xx(0,0;2) \draw[thick] (0.1,0)--(0.9,0); \xx(1,0;4) \draw[thick]
    (1.15,0.05)--(1.9,0.05); \draw[thick] (1.15,-0.05)--(1.9,-0.05);
    \oo(2,0;3) \draw[thick] (1.1,0) -- (1.3,0.15); \draw[thick]
    (1.1,0) -- (1.3,-0.15);
  \end{tikzpicture}
  \\

  \F_4&\Sp(8)&&
  \begin{tikzpicture}[scale=\scalar]
    \oo(0,0;2) \draw[thick] (0.1,0)--(0.9,0); \xx(1,0;3) \draw[thick]
    (1.1,0.05)--(1.85,0.05); \draw[thick] (1.1,-0.05)--(1.85,-0.05);
    \xx(2,0;4) \draw[thick] (1.9,0) -- (1.7,0.15); \draw[thick]
    (1.9,0) -- (1.7,-0.15); \xx(3,0;2) \draw[thick] (2.1,0)--(2.9,0);
  \end{tikzpicture}
  \\

  \noalign{\smallskip\hrule\smallskip}

  \multicolumn4l{\text{Additionally for $p=3$:}}\\

  \G_2&\<s_{\a_2}\>\cdot\SL(3)_{\rm short}&&
  \begin{tikzpicture}[scale=\scalar]
    \xx(0,0;3) \draw[thick] (0.2,0.07)--(0.9,0.07); \draw[thick]
    (0.1,0)--(0.9,0); \draw[thick] (0.2,-0.07)--(0.9,-0.07);
    \oo(1,0;2) \draw[thick] (0.1,0) -- (0.3,0.15); \draw[thick]
    (0.1,0) -- (0.3,-0.15);
  \end{tikzpicture}[\times2]
  \\

  \G_2&\multicolumn2l{\GL(2)_{\rm short}U_{\a_1{+}\a_2}U_{2\a_1{+}\a_2}U_{3\a_1{+}2\a_2}}&
  \begin{tikzpicture}[scale=\scalar]
    \oo(0,0;3) \draw[thick] (0.2,0.07)--(0.9,0.07); \draw[thick]
    (0.1,0)--(0.9,0); \draw[thick] (0.2,-0.07)--(0.9,-0.07);
    \oo(1,0;1) \draw[thick] (0.1,0) -- (0.3,0.15); \draw[thick]
    (0.1,0) -- (0.3,-0.15);
  \end{tikzpicture}
\end{array}
$$

\newpage

\begin{bibdiv}
  \begin{biblist}

    \bib{Akh}{article}{ author={Akhiezer, D. N.}, title={Equivariant
        completion of homogeneous algebraic varieties by homogeneous
        divisors}, journal={Ann. Global Anal. Geom.}, volume={1},
      pages={49--78}, date={1983}, }

    \bib{BorLAG}{book}{ author={Borel, Armand}, title={Linear
        algebraic groups}, series={Graduate Texts in Mathematics},
      volume={126}, edition={2}, publisher={Springer-Verlag},
      place={New York}, date={1991}, pages={xii+288}, }

    \bib{Bou}{book}{ author={Bourbaki, N.}, title={Éléments de
        mathématique. Fasc. XXXIV. Groupes et algèbres de
        Lie. Chapitre IV: Groupes de Coxeter et systèmes de
        Tits. Chapitre V: Groupes engendrés par des
        réflexions. Chapitre VI: systèmes de racines},
      series={Actualités Scientifiques et Industrielles, No. 1337},
      publisher={Hermann}, place={Paris}, date={1968}, pages={288
        pp.}, }

    \bib{BrLu}{article}{ author={Bravi, P.}, author={Luna, D.},
      title={An introduction to wonderful varieties with many examples
        of type $\F_4$}, journal={J. Algebra}, volume={329},
      pages={4--51}, date={2011},
      arxiv={0812.2340} }

    \bib{BriES}{article}{ author={Brion, M.}, title={Vers une
        généralisation des espaces symétriques}, journal={J. Algebra},
      volume={134}, pages={115--143}, date={1990}, }

    \bib{BriR1}{article}{ author={Brion, M.}, title={On spherical
        varieties of rank one (after D. Ahiezer, A.  Huckleberry,
        D. Snow)}, conference={ title={Group actions and invariant
          theory}, address={Montreal, PQ}, date={1988}, }, book={
        series={CMS Conf. Proc.}, volume={10},
        publisher={Amer. Math. Soc.}, place={Providence, RI}, },
      date={1989}, pages={31--41}, }

    \bib{BorTits}{article}{ author={Borel, A.}, author={Tits, J.},
      title={Éléments unipotents et sous-groupes paraboliques de
        groupes réductifs. I}, journal={Invent. Math.}, volume={12},
      pages={95--104}, date={1971}, }

    \bib{Dyn}{article}{ author={Dynkin, E. B.}, title={Semisimple
        subalgebras of semisimple Lie algebras}, journal={Mat. Sbornik
        N.S}, volume={30(72)}, pages={349--462}, date={1952}, }

    \bib{FvdK}{article}{ author={Franjou, V.}, author={van der Kallen,
        W.}, title={Power reductivity over an arbitrary base},
      journal={Doc. Math.}, date={2010}, number={\rm Extra volume:
        Andrei A. Suslin sixtieth birthday}, pages={171--195}, }

    \bib{KnMT}{article}{ author={Knop, F.}, title={Mehrfach transitive
        Operationen algebraischer Gruppen},
      journal={Arch. Math. (Basel)}, volume={41}, pages={438--446},
      date={1983}, }

    \bib{Hyd}{article}{ author={Knop, F.}, title={The Luna-Vust theory
        of spherical embeddings}, conference={ title={Proceedings of
          the Hyderabad Conference on Algebraic Groups (Hyderabad,
          1989)}, }, book={ publisher={Manoj Prakashan},
        place={Madras}, }, date={1991}, pages={225--249}, }

    \bib{InvBew}{article}{ author={Knop, F.}, title={Über Bewertungen,
        welche unter einer reduktiven Gruppe invariant sind},
      journal={Math. Ann.}, volume={295}, pages={333--363},
      date={1993}, }

    \bib{KnLoc}{article}{ author={Knop, F.}, title={Localization of
        spherical varieties}, journal={Preprint}, pages={25 pages},
      date={2013},
      arxiv={1303.2561}, }

    \bib{KnRoe}{article}{ author={Knop, F.}, author={Röhrle, G.},
      title={Spherical subgroups in simple algebraic groups},
      journal={Preprint}, pages={21 pages}, date={2013},
      arxiv={1305.3183}, }

    \bib{Krae}{article}{ author={Krämer, M.}, title={Sphärische
        Untergruppen in kompakten zusammenhängenden Liegruppen},
      journal={Compos. Math.}, volume={38}, pages={129--153},
      date={1979}, }

    \bib{LuTypA}{article}{ author={Luna, D.}, title={Variétés
        sphériques de type A}, journal={Publ. Math. Inst. Hautes
        Études Sci.}, volume={94}, pages={161--226}, date={2001}, }

    \bib{LV}{article}{ author={Luna, D.}, author={Vust, Th.},
      title={Plongements d'espaces homogènes},
      journal={Comment. Math. Helv.}, volume={58}, pages={186--245},
      date={1983}, }

    \bib{Schalke}{article}{ author={Schalke, B.}, title={Sphärische
        Einbettungen in positiver Charakteristik},
      journal={Diplomarbeit (Universität Erlangen)}, pages={39 pages},
      date={2011}, }

    \bib{Wass}{article}{ author={Wasserman, B.}, title={Wonderful
        varieties of rank two}, journal={Transform. Groups},
      volume={1}, pages={375--403}, date={1996}, }

  \end{biblist}
\end{bibdiv}
\end{document}